\documentclass{article}
\usepackage{mathtools}
\usepackage{graphicx} 
\usepackage{bbm}
\usepackage{stmaryrd,mathrsfs}
\usepackage{lipsum}
\usepackage{bbm}
\usepackage{amsthm}
\usepackage{amsmath,amsfonts,amssymb}
\usepackage{array}
\usepackage{multicol}
\usepackage{empheq}
\usepackage{enumitem}

\theoremstyle{definition}
\newtheorem{theorem}{Theorem}[section]
\newtheorem{proposition}[theorem]{Proposition}
\newtheorem{corollary}[theorem]{Corollary}
\newtheorem{lemma}[theorem]{Lemma}

\newtheorem{remark}[theorem]{Remark}
\newtheorem{definition}[theorem]{Definition}
\newtheorem{exemple}[theorem]{Example}
\newtheorem*{sketch of proof}{Sketch of proof}
\newtheorem{Assumption}{Assumption}

\newtheorem*{Physics Principle}{Generalized Hamilton Principle}

\usepackage{empheq}
\usepackage{caption}
\usepackage{xcolor}
\newcommand{\R}{\mathbb{R}}

\RequirePackage[left=2.5cm,right=2.5cm,top=2.5cm,bottom=3cm]{geometry} 
\usepackage[hidelinks]{hyperref}

\newcommand{\wt}[1]{\widetilde{#1}}

\newcommand{\ol}[1]{\overline{#1}}

\newcommand{\Id}{\mathrm{Id}}

\newcommand{\n}[1]{\left\Vert #1\right\Vert}
\newcommand{\la}{\left\langle}
\newcommand{\ra}{\right\rangle}

\newcommand{\N}{\mathbb{N}}

\newcommand{\mscr}[1]{\textrm{\fontencoding{OMS}\fontfamily{ztmcm}\selectfont#1}
}
\newcommand{\mc}[1]{\mathcal{#1}}
\newcommand{\indic}{\mathbbm{1}}

\newcommand{\pa}[1]{{{\partial}\over{\partial #1}}}
\newcommand{\diff}{d}

\newcommand{\dif}[1]{\frac{\diff}{\diff #1}}
\newcommand{\longto}{\mathop{\longrightarrow}}
\newcommand{\crg}{_{{\mathfrak{g}^*}\!,\mathfrak{g}}}

\newcommand{\GL}{\mathrm{GL}}
\newcommand{\gl}{\mathfrak{gl}}
\newcommand{\SO}{\mathrm{SO}}
\newcommand{\so}{\mathfrak{so}}
\newcommand{\SE}{\mathrm{SE}}
\newcommand{\se}{\mathfrak{se}}

\newcommand{\id}{\mathrm{id}}

\newcommand{\qmatrix}[1]{ \left( \begin{matrix} #1 \end{matrix} \right) }

\def\[#1\]{\begin{align*}#1\end{align*}}
\def\be#1\ee{\begin{align}#1\end{align}}
\def\bea#1\eea{\begin{align}#1\end{align}}
\def\ben#1\een{\begin{align*}#1\end{align*}}

\newcommand{\ad}{\mathrm{ad}}
\newcommand{\Lie}{\mathrm{Lie}}

\begin{document}

\title{Bio-inspired control model for snake locomotion based on Cosserat beam theory}
\author{Eliot Thys, Frédéric Boyer, Yacine Chitour, \\ Petri Kokkonen, Vincent Lebastard, Swann Marx, Clément Moreau}
\date{\today}

\maketitle

\tableofcontents

\section{Introduction}

 This paper addresses the modeling of locomotion in snake-like and other slender bodies, including fish, nematodes, worms, and microorganisms, as well as robotic systems designed to mimic such forms of locomotion. A substantial body of literature has been devoted to this topic, motivated primarily by two considerations. First, snake locomotion remains a fascinating subject of biological research, whose underlying mechanisms are still not fully understood, particularly in the presence of complex interactions with solid environments, \cite{Gray1946}. Second, this type of locomotion provides a major source of inspiration for the design of continuum robots and lies at the heart of the so-called soft robotics paradigm, \cite{BurgnerKahrs2015}. There exists a wide variety of models addressing the locomotion of snakes. Finite-dimensional models typically rely on discrete multibody representations, in which a finite number of rigid vertebrae are connected pairwise through lumped joints \cite{Choseta},\cite{Liljeback}, \cite{Ostrowski96gaitkinematics}, \cite{Boyer2011}. When snakes move on solid ground with full-body contact, they commonly employ a locomotion mode known as lateral undulation. In this mode, the body appears to propagate according to a so-called ``leader-follower'' mechanism, in which each vertebra follows the motion of the head with a certain time delay, \cite{Ostrowski96gaitkinematics},\cite{Choset2009}. However, these discrete models have inherent limitations when compared with real snakes, which possess more than 300 vertebrae embedded in muscles and connective tissues, effectively making them continuum-like bodies. Modeling their locomotion therefore calls for a continuous representation. To address these limitations, Cosserat rod theory has proved to be an effective framework for describing and simulating the locomotion of elongated animals, such as fish swimming \cite{boyer2010poincare} and snake lateral undulation \cite{boyer2011macrocontinuous}. In this framework, several rod kinematic descriptions, including Kirchhoff and Simo--Reissner models, as well as different types of kinematic constraints, have been implemented to study the terrestrial locomotion of slender animals. Despite these advances, these studies have primarily focused on the geometrical and mechanical aspects of locomotion, with no attention paid to its functional aspects.\\

The purpose of the present work is to revisit this approach from a more mathematical perspective. In particular, tools from functional analysis are employed to formulate several locomotion problems within a control-theoretic framework.\\

The paper is organized as follows. We collect in Section~\ref{sec:definitions_and_notations} the main definitions and notations used throughout the paper. We provide in Section~\ref{Section cosserat beam} first a detailed presentation of the configuration space we are working in,
and explain its links with mechanics. We then give a unified definition of dynamical constraints
which covers all the main cases such as kinematic, holonomic and non-holonomic constraints. We illustrate this definition with several examples.
Section~\ref{section:Cosserat-Poincaré Equations} introduces the dynamical equations needed to describe the motion of snakes. We first state a weak form of the Cosserat-Poincar\'e equation with constrains and external
forces obtained as the application of a generalized Hamiltonian principle. In a second step, we rigorously
derive a strong form of the Cosserat-Poincar\'e equation (i.e., variation-free) in the case of distributed Lagragian.
We then specialize this equation to the case where the Lagrangian is left-invariant
and is a difference of kinetic energy and (quadratic) potential energy terms.
Subsequently, we consider the total energy associated with the solutions of left-invariant Cosserat-Poincar\'e equations and compute its times derivative. The last part of the
section is devoted to deriving Cosserat-Poincar\'e equations for several important examples, in the form of systems of partial differential equations with adapted set of unknowns (in particular in the presence of constraints, where the corresponding Lagrange
multiplier comes into the picture). In Section~\ref{sec:control}, we put an emphasis on the notion of actuation, namely classifying these forces according to their physical nature and their role as potential actuators
for locomotion. Then, using these actuation effects, we will derive some controls systems for the robot locomotion of Cosserat-Poincar\'e types.
Finally, in Appendix, we gather on one hand several technical results needed in the main text
(compatibility conditions, inverse form of the structure equation, complete proof of the main theorem \ref{least action thm}, etc.) and on the other hand we provide detailed
computations showing that the Cosserat framework covers the cases of the rigid body and the leader-follower model.

In this work we have presented the Cosserat framework for the applied mathematics and control communities,
putting an emphasis on the derivation of many interesting systems of partial differential equations
representing motions of snakes with or without actuation. Future work must address $(a)$ existence and regularity of of solutions of these nonlinear (semilinear) hyperbolic systems in arbitrary times; $(b)$
asymptotic behavior in case of (let say) dissipative or friction forces; $(c)$ in case of actuation, control issues such as (distributed, boundary) controllability and observability.

\section{Definitions and notations}\label{sec:definitions_and_notations}

Let us introduce notations used throughout the paper.
Let $k\in\N_0$.
For vector spaces (resp. normed spaces) $E,F$,
we write $\mathscr{L}(E;F)$ (resp. $\mathscr{L}_c(E;F)$)
for the set of linear (resp. continuous linear) mappings
from $E$ to $F$.
For normed spaces $E$ and $F$,
write $E^*$ for the (continuous) dual space of $E$
and $L^*:F^*\to E^*$ for the adjoint of $L\in\mathscr{L}_c(E;F)$.



In this paper, we will only consider
real matrix (i.e. linear) Lie groups for simplicity. Adjusting any result presented to general finite dimensional Lie groups should be straightforward.
For a matrix Lie group $G$ acting on the left on a vector space $\mathbb{R}^k$,
the action of an element $g\in G$ on $x\in\R^k$
is denoted $g\cdot x$, or just $gx$
when $G$ consists of real $k\times k$-matrices.
The set of all real $k\times k$-matrices is denoted $M_{\R}(k)$.

The Lie algebra $\mathfrak{g}=\Lie(G)$ of $G$
is equipped with the associated Lie bracket $[\cdot,\cdot]$,
which in our case is just the commutator $[\xi,\eta]=\xi\eta-\eta\xi$ (for $\xi,\eta\in\mathfrak{g}$),
and $\mathfrak{g}$ is also assumed to be equipped with some inner product $\langle\cdot\,,\cdot\rangle$
of choice.
Using the latter, we often identify $\mathfrak{g}$ with its dual
$\mathfrak{g}^*$
and the adjoint $L^*$ of $L\in\mathscr{L}(\mathfrak{g};\mathfrak{g})$ with the transpose $L^T\in\mathscr{L}(\mathfrak{g};\mathfrak{g})$ of $L$
without any further mention.

For any $\xi\in \mathfrak{g}$,
one writes $\ad_{\xi}:\mathfrak{g}\to\mathfrak{g}$
for the linear map defined by
$\ad_{\xi}(\eta):=[\xi,\eta]$ for $\eta\in\mathfrak{g}$.
The adjoint of $\ad_{\xi}$ is a linear map $(\ad_{\xi})^*:\mathfrak{g}^*\to\mathfrak{g}^*$
which, in this text, we shall write as $\ad_{\xi}^*$ for the ease of notation.

The reader should pay attention to the selected sign convention associated with the notation $\ad_{\xi}^*$ in this paper:

\begin{remark}
{\bf Sign convention used for $\ad_{\xi}^*$:}
Usually in the literature (see \cite{kirillov2008introduction}) $\ad_{\xi}^*$ denotes the \emph{coadjoint} representation which is $-(\ad_{\xi})^*$
so our convention here $\ad_{\xi}^*:=(\ad_{\xi})^*$ is the negative of the coadjoint representation.
\end{remark}

For a smooth manifold (with or without boundary) $M$
and a normed space $E$,
we denote by $\mathscr{C}^k(M;E)$ the space of
all $k$-times continuously differentiable functions $M\to E$.
In particular, we only consider derivatives in the sense of Fr\'echet differentiability.

For a matrix Lie group $G\subset M_{\R}(k)$ and the
associated Lie algebra $\mathfrak{g}$, we write for short
\[
C^{k}_G:=\big\{f\in \mathscr{C}^k([0,1];M_{\R}(k))\ \big|\ f([0,1])\subset G\big\}
\quad\textrm{and}\quad
C^{k}_{\mathfrak{g}}:=\mathscr{C}^k([0,1];\mathfrak{g}).
\]
Similarly, for $T>0$ and $l\in\N_0$,
we denote
\[
C^{k,l}_{G}=\mathscr{C}^k([0,T];C^l_G):=\big\{h:[0,T]\to \mathscr{C}^k([0,1];M_{\R}(k))\ \big|\ h([0,T])\subset C^l_G\big\}
\quad\textrm{and}\quad
C^{k,l}_{\mathfrak{g}}:=\mathscr{C}^k([0,T];C^l_{\mathfrak{g}}).
\]


Some particular matrix Lie groups $G$ and their Lie algebras $\mathfrak{g}$
that we are mainly referring to in this paper are the following:
{\bf (a)} $\GL(k)$ is the group of all invertible real $k\times k$-matrices. Its Lie algebra is $\mathfrak{gl}(k)=M_{\R}(k)$, the set of all real $k\times k$-matrices.
All matrix Lie groups are subgroups of $\GL(k)$
and their Lie algebras are Lie subalgebras of $\gl(k)$.
{\bf (b)} $\SO(k)$ is the group of all real orthogonal $k\times k$-matrices with determinant one, i.e. $g\in\GL(k)$ s.t. $g^{-1}=g^T$ and $\det(g)=1$.
Its Lie algebra $\so(k)$ is the set of all real skew-symmetric $k\times k$-matrices, i.e. $\xi\in\mathfrak{gl}(k)$ s.t. $\xi^T=-\xi$.
{\bf (c)} $\SE(k)$ is (the component of identity of) the isometry group of $\R^k$
and it can be identified (isomorphically)
with the matrix group
\begin{equation}
\label{def:SE(k)}
\SE(k)=\left\{g=\begin{pmatrix}
    R & p \\
    0 & 1 \\
\end{pmatrix}\ \middle|\ R\in \SO(k),\ p\in \mathbb{R}^k\right\}
\end{equation}
of $(k+1)\times (k+1)$-matrices.
This matrix group acts naturally on $\R^k$ by
\begin{equation}
g\cdot x:=Rx + p, \quad g\in \SE(k),\ x\in \mathbb{R}^k.
\end{equation}
The associated Lie algebra is
\begin{equation}
\label{def:se(k)}
\mathfrak{se}(k)=\left\{M = \begin{pmatrix}
 A & v \\
 0 & 0
\end{pmatrix} \ \middle| \ A\in \mathfrak{so}(k),\ v\in \mathbb{R}^k \right\}.
\end{equation}

Finally, here are some notations that are relevant
for $\so(k)$ and $\se(k)$ when $k=3$ or $k=2$.
First is the hat operator
\begin{equation}\begin{aligned}\label{eq:hat_operator:1}
\widehat{ }:\mathbb{R}^3&\xrightarrow{} \mathfrak{so}(3)\\
\begin{pmatrix}
x_1\\x_2\\x_3
\end{pmatrix}& \mapsto \begin{pmatrix}
0 & -x_3 &x_2\\
x_3 & 0 & -x_1\\
-x_2 & x_1 & 0
\end{pmatrix}
\end{aligned}
\end{equation}
which yields a linear isomorphism from $\R^3$ onto $\so(3)$
such that
\begin{equation}
   X\times Y = \widehat{X}Y
   \quad \forall X,Y\in \mathbb{R}^3,
\end{equation}
where $\times$ is the cross product.
Analogously, the skew-symmetric matrix
$\mathrm{Q}=\begin{pmatrix}
0 & -1 \\ 1 & 0
\end{pmatrix}\in\so(2)$
yields a linear isomorphism
$\R\to\mathfrak{so}(2)$ by
$\omega\mapsto \omega \mathrm{Q}$.

We may then introduce the maps
   \begin{equation}\label{def : Lie algebra isomorphism}\begin{aligned}
    \mc{I}_3:(\mathbb{R}^6,[\cdot,\cdot]_{\mathbb{R}^6}) &\rightarrow (\mathfrak{se}(3),[\cdot,\cdot]), &  \mc{I}_2:(\mathbb{R}^3,[\cdot,\cdot]_{\mathbb{R}^3}) &\rightarrow (\mathfrak{se}(2),[\cdot,\cdot])\\
   \begin{pmatrix}
       X_1 \\ X_2
   \end{pmatrix} &\mapsto \begin{pmatrix}
       \widehat{X_1} & X_2 \\ 0 & 0
   \end{pmatrix} & \begin{pmatrix}
       \omega \\ X
   \end{pmatrix} &\mapsto \begin{pmatrix}
       \omega\mathrm{Q} & X \\ 0 & 0
   \end{pmatrix}\end{aligned}\end{equation}
which are Lie algebra isomorphisms when the respective
left-hand side spaces are equipped with the Lie brackets
   \begin{equation}
     [\zeta_1,\zeta_2]_{\mathbb{R}^6}=\left[\begin{pmatrix}
         X_1 \\ X_2
     \end{pmatrix},\begin{pmatrix}
         Y_1 \\ Y_2 \end{pmatrix}\right]_{\mathbb{R}^6}:=\begin{pmatrix}
         X_1 \times Y_1 \\ X_2 \times Y_1 + X_1 \times Y_2
     \end{pmatrix}, \quad \forall \zeta_1,  \zeta_2 \in \mathbb{R}^6,
 \end{equation}
and
\begin{equation}
 [\zeta_1,\zeta_2]_{\mathbb{R}^3}=\left[\begin{pmatrix}
     w_1 \\ X_1
 \end{pmatrix},\begin{pmatrix}
     w_2 \\ X_2 \end{pmatrix}\right]_{\mathbb{R}^3}:=\begin{pmatrix}
     0 \\ \omega_1 \mathrm{Q} X_2 - \omega_2 \mathrm{Q} X_1
 \end{pmatrix}, \quad \forall \zeta_1,  \zeta_2 \in \mathbb{R}^3.
 \end{equation}
At last, using these isomorphisms,
we define
   \begin{equation}
    P_3 : = \begin{pmatrix}
        0_{3\times 3 } & I_3
    \end{pmatrix},
    \quad
    P_2 = \begin{pmatrix}
        0_{2\times 1 } & I_2
    \end{pmatrix}
   \end{equation}
to be the projection matrices onto the linear (i.e. translational)
parts of $\mathfrak{se}(3)\simeq \mathbb{R}^6$
and $\mathfrak{se}(2)\simeq \mathbb{R}^3$,  respectively.
In other words, if one writes the elements of $\se(k)$ as $(x,A)\in\R^k\times\so(k)$
then $(P_k\circ\mc{I}_k)(x,A)=x$ for $k=2,3$.

\section{Configuration space and constraints}
\label{Section cosserat beam}

In this section, we define the configuration (or state) space we will be working in, which is a functional space taking values in a matrix Lie group $G$ acting on $\mathbb{R}^k$ where $k$ is a positive integer. We will then consider curves in the configuration space and define the pullback in the Lie algebra of the time and spatial derivatives of these curves. The latter verify
a partial differential equation referred to as the {\it structure equation} which reflects the eventual noncommutativity of $G$ (or equivalently the non-Euclidean nature of $G$). Finally, we will present a general framework for constraints acting on elements of the configuration space (or/and curves in that space).

\subsection{Configuration space}\label{ss-section-config}

In this paper, the configuration space is defined as the function space $C^2_G$, whose elements will be referred to as either \emph{states or configurations} (sometimes called Cosserat beams or rods, \cite{Cosserat1909}). For a fixed time horizon $T>0$, the elements of $C^{2,2}_{G}$ are referred to as \emph{movements} or \emph{motions}. We call \emph{pose at time $t\in [0,T]$} of a movement, the value in $C^{2}_G$ of that movement at $t\in [0,T]$. The space
$C^{2,2}_{G}$
will be our natural candidate for describing motions of snakes
over the time interval $[0,T]$, as explained in the sequel.

\begin{remark}\label{rem-statespace}
Instead of the continuous framework, one could have considered the Hilbertian one, namely replacing $\mathscr{C}^k$'s above by the Sobolev spaces $H^k$'s.
Even though the later choice has some advantages (see below when we will consider several dual spaces and dual operators), we stick to our choice because of two reasons: simplicity of the presentation (partial differential equations defined everywhere instead of a.e.) and keeping track of the dual operations to understand the meaning of each equation better.
\end{remark}

\begin{remark}\label{rem:imbrication-Versus-2Var}
The above spaces have been introduced because
we adopt here a time evolution point of view for describing the motion of snakes, meaning that we aim later at characterizing
this motion over a time interval $[0,T]$ as elements of $C^{2,2}_G$. It is important to notice that $C^{2,2}_G$ is (continuously) included
in $X^2_G:=\mathscr{C}^2([0,T]\times [0,1],G)$ and the inclusion is strict. In Appendix~\ref{sec:app:C^k(C^l)}, we exactly explain
that strict inclusion, namely we prove that $C^{2,2}_G$ is in one-to-one correspondence with the Banach subspace
$D^{2,2}_G$ of
$X^2_G$ made of the functions admitting the six continuous partial derivatives of order four  with two indices of derivations equal to the time (and hence the two others equal to the section $x$). With no further notice, we will often identify $C^{2,2}_G$ with $D^{2,2}_G$.
For an element of $X^2_G$, its value at time $(t,x)\in [0,T]\times [0,1]$ is called its  \emph{pose  at time $t$ and section $x$}.
\end{remark}

\begin{remark}\label{rem:mechanics}
We next provide a point of view for configurations and movements (i.e., elements of $C^{2}_G$ and $C^{2,2}_G$ respectively) issued from mechanics. Although many of the results that follow hold for any Lie group, the mechanical objets to which they apply are Cosserat rods \cite{Cosserat1909}, for which $G$ is equal to $SE(k)$, the Lie group of affine isometries of $\mathbb{R}^k$, cf. Definition~\ref{def:SE(k)}, which is also equal to the semi-direct product $\R^{k}\rtimes \SO(k)$. Starting from the general framework of continuous media mechanics \cite{MarsdenHughes}, we define two spaces. The first is the ambient Euclidian space $\mathcal{E}\cong \R^k$ equipped with a fixed orthonormal frame $\mathcal{F}_s=(o,e_1,\dots,e_k)$ named the ``spatial frame''. The coordinates of points of $\mathcal{E}$ in $\mathcal{F}_s=(o,e_1,\dots,e_k)$, or spatial coordinates, are the usual ``Eulerian variables''. The second space is the continuous medium itself, or ``body'', defined as a compact set $\mathcal{B}\subset\R^k$ of material points or ``particles''. These particles are labeled by their ``Lagrangian coordinates'' $(X^1,X^2,\dots,X^k)$ in a frame $\mathcal{F}_m=(O,E_1,E_2,\dots,E_k)$ attached to $\mathcal{B}$ that we name the material frame. In practice, these two spaces are (non canonically) identified by taking $\mathcal{F}_m=\mathcal{F}_s$ \cite{MerodioRosato2012}. Any configuration of $\mathcal{B}$ is defined by an invertible smooth map $\varphi(\cdot): X\in \mathcal{B}\mapsto \varphi(X)\in \mathcal{E}$ that preserves orientation, named a ``placement'', where $\varphi(X)$ are the coordinates of the position of the $X$-particle in $\mathcal{F}_s$ \cite{Truesdell1991}. In the Cosserat rod model \cite{Antman76}, \cite{Simo1988}, $\mathcal{B}$ is modeled as a continuous set of rigid cross-sections staked along one material dimension, say along $E_1$, that we identify with the material line of cross-section centroids, or ``material centroidal line'', and we have $\mathcal{B}=\cup_{X^1\in[0,1]}\mathcal{S}_{X^1}$, where $\mathcal{S}_{X^1}$ is the material cross-section of label $X^1\in [0,1]$. With this choice, any particle $X\in \mathcal{B}$, is labeled by its position $X=X^1 E_1+ X_\perp$, with $X_\perp=X^2E_2+X^3E_3+\dots+X^kE_k$ the material coordinates of the particle in the $X^1$-cross-section. The cross-sections being rigid, the position $\varphi(X)\in \mathcal{E}$ of any $X\in\mathcal{B}$, is defined in the spatial frame by $\varphi(X)=p(X^1)+R(X^1)X_\perp$, where $p(X^1)\in \R^k$ is the position in $\mathcal{F}_s$ of the centroid of the $X^1$-cross-section to which $X$ belongs, and $R(X^1)\in \SO(k)$ is the orientation in $\mathcal{F}_s$ of a spatial basis $(b_1,b_2,\dots,b_k)(X^1)=(R(X^1)E_1,R(X^1)E_2,\dots,R(X^1)E_k)$ materially attached to the cross-section and named the ``cross-sectional basis''. This parameterization shows that any rod configuration can be identified with a $X^1$-
parameterized curve in $\SE(k)$ \cite{BoyerPrimault},\cite{boyer2017poincare} here represented by homogeneous matrices (\ref{def:SE(k)}) or ``poses'', $g\in \SE(k)$, of cross-sectional frames $(p,b_1,b_2,\dots,b_k)(X^1)$ in $\mathcal{F}_s$, see Figure~\ref{fig:config}. Therefore, the rod
configuration space is naturally defined as the functional space $\mathcal{C}=\{g(\cdot): X^1\in[0,1]\mapsto g(X^1)\in \SE(k)\}$.
As regards movements, i.e. elements of $C^{2,2}_G$, we consider time-dependent placements of the form $\varphi(\cdot,\cdot): (X^1,t)\in [0,1]\times \mathbb{R}^{+}\mapsto\varphi(X,t)=p(X^1,t)+R(X^1,t)X_\perp$, i.e.
movements define a space of $(X^1,t)$-parameterized surfaces in $\SE(k)$: $\mathcal{M}=\{(g(\cdot,\cdot): (X^1,t)\in[0,1]\times \R_+\mapsto g(X^1,t)\in \SE(k)\}$.
\end{remark}

\begin{remark}
\label{remark : g0 fred}
Among all configurations accessible to the rod, we select one of them which is stress-free and refer to it as the reference configuration, denoted by $g_0$. This reference configuration will provide the basis for defining the strain measures in the context of (hyper) elasticity we consider here. In all the following, any field (i.e., any function) evaluated along this particular configuration will be indexed (subscripted) by a $0$ and often referred to as the \emph{reference field}.
\end{remark}

\begin{remark}\label{rem:1DCosserat}
In the above definitions, it is possible to replace the material centroid line by a $q$-dimensional manifold $M^q$ of the material space $\mathcal{B}$ along the dimensions of which are stacked ``small'' rigid bodies named "micro-bodies" or ``micro-structures'' that generalize the cross-sections of a rod. These media are named $q$-dimensional Cosserat media \cite{Antman76}, \cite{boyer2017poincare}. Using this terminology, in the present paper we only deal with one dimensional Cosserat media or Cosserat rods (i.e., configurations).
\end{remark}

\begin{figure}[h]
\begin{minipage}{0.9\textwidth}
\centering
    \includegraphics[scale=0.3]{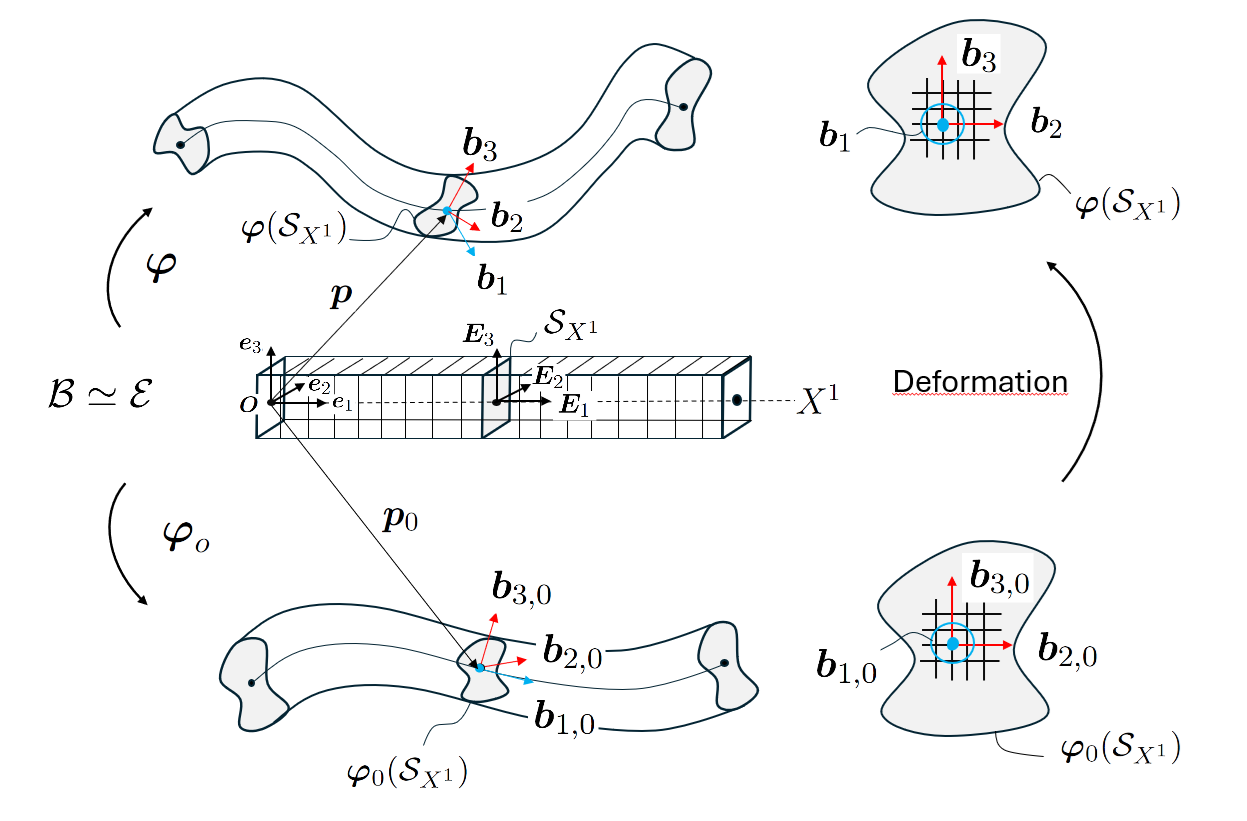}
   \caption{Representation of the material continuous medium $\mathcal{B}$ of a Cosserat rod with two configurations $\varphi$ and $\varphi_0$.}
   \label{fig:config}
\end{minipage}
\end{figure}

We next introduce time-twists and space-twists of a movement.

\begin{definition}[time twist and space twist]\label{def:twists:1}
For $g\in C^{2,2}_G$, we define its \emph{time twist} $\eta_g$ and \emph{space twist} $\xi_g$ as the elements of $C^{1,2}_{\mathfrak{g}}$ and $C^{2,1}_{\mathfrak{g}}$ given respectively by
\begin{equation}\label{eq:VT}
    \eta_g := g^{-1}\partial_tg
\end{equation}
and
\begin{equation}\label{eq:ST}
    \xi_g := g^{-1}\partial_xg.
\end{equation}
\end{definition}
\begin{remark}
    Studying the dynamic of a movement using $\eta_g$ and $\xi_g$ instead of $\partial_tg$ and $\partial_xg$ will be simpler as $\eta_g$ and $\xi_g$ take values in the vector space $\mathfrak{g}$ while $\partial_tg$ and $\partial_xg$ take values in the tangent bundle $TG$. To use the Lie algebra structure of $\mathfrak{g}$, we will often consider functions $f : TG \rightarrow \mathbb{R}$ that admit a left reduction, i.e. there exists a function $f_r : \mathfrak{g} \rightarrow \mathbb{R}$ such that $f(h,h_1)=f_r(h^{-1}h_1)$ for all $(h,h_1)\in TG$.
\end{remark}

The geometric structure of the Lie group $G$ appears in the following PDE satisfied by $\eta_g$ and $\xi_g$ and whose proof is immediate.
\begin{proposition}\label{prop:structure}
For $g\in C^{2,2}_G$, the corresponding
time twist $\eta_g\in  C^{1,2}_\mathfrak{g}$ and space twist twist $\xi_g  \in C^{2,1}_\mathfrak{g}$ satisfy
the \emph{structure equation} given by
\begin{equation}\label{eq:structure}
\partial_x\eta_g-\partial_t\xi_g=[\eta_g,\xi_g]
\quad \textrm{ on } [0,T]\times [0,1].
\end{equation}
\end{proposition}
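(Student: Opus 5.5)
The argument is a direct computation with matrix-valued functions. By Remark~\ref{rem:imbrication-Versus-2Var} we identify $g\in C^{2,2}_G$ with an element of $D^{2,2}_G\subset X^2_G$. Then $g$ is a $\mathscr{C}^2$ map $[0,T]\times[0,1]\to G\subset M_{\R}(k)$, and the mixed partial derivatives $\partial_x\partial_t g$ and $\partial_t\partial_x g$ exist, are continuous, and coincide by Schwarz's theorem. This regularity bookkeeping is the only point needing care. Its purpose is to ensure that $\partial_x\eta_g$ and $\partial_t\xi_g$ exist classically and that the two mixed derivatives of $g$ may be exchanged. Since $\eta_g\in C^{1,2}_{\mathfrak g}$ and $\xi_g\in C^{2,1}_{\mathfrak g}$, both derivatives are at least continuous on $[0,T]\times[0,1]$.

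The first step is to differentiate the inverse. Since inversion is smooth on $\GL(k)$ and $g$ takes values in $G\subset\GL(k)$, the map $g^{-1}$ is $\mathscr{C}^2$. Differentiating $g^{-1}g=\Id$ gives
\[
\partial_x(g^{-1})=-g^{-1}(\partial_x g)g^{-1},\qquad \partial_t(g^{-1})=-g^{-1}(\partial_t g)g^{-1}.
\]

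The second step applies the product rule to $\eta_g=g^{-1}\partial_t g$ and $\xi_g=g^{-1}\partial_x g$:
\[
\partial_x\eta_g=-g^{-1}\partial_x g\,g^{-1}\partial_t g+g^{-1}\partial_x\partial_t g=-\xi_g\eta_g+g^{-1}\partial_x\partial_t g,
\]
\[
\partial_t\xi_g=-g^{-1}\partial_t g\,g^{-1}\partial_x g+g^{-1}\partial_t\partial_x g=-\eta_g\xi_g+g^{-1}\partial_t\partial_x g.
\]

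Subtracting the second identity from the first, the second-order terms cancel because the mixed partials commute. This leaves $\partial_x\eta_g-\partial_t\xi_g=\eta_g\xi_g-\xi_g\eta_g=[\eta_g,\xi_g]$, which is the commutator bracket of $\mathfrak g$. The identity holds pointwise on all of $[0,T]\times[0,1]$, including the boundary, because all the derivatives involved are continuous up to the boundary. Finally, both sides lie in $\mathfrak g$: the left side because $\eta_g$ and $\xi_g$ are $\mathfrak g$-valued and $\mathfrak g$ is a closed linear subspace, and the right side because $\mathfrak g$ is closed under the bracket.
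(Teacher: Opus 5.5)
Your proof is correct and is exactly the direct computation the paper has in mind when it calls the proof immediate. You differentiate $g^{-1}g=\Id$, apply the product rule to $g^{-1}\partial_t g$ and $g^{-1}\partial_x g$, and cancel the mixed second derivatives via Schwarz's theorem, justified through the identification $C^{2,2}_G\simeq D^{2,2}_G$; the paper carries out the same steps in the proof of Theorem~\ref{least action thm} to obtain $\partial_h \xi_{\widetilde g}-\partial_x v_{\widetilde g}=[\xi_{\widetilde g},v_{\widetilde g}]$.
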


\begin{remark}\label{re:reconstruction:1}
Conversely, given a function $\eta \in C^{1,2}_\mathfrak{g}$ and $\xi \in  C^{2,1}_\mathfrak{g},$ the existence of a movement $g\in C^{2,2}_G$ such that $\eta=\eta_g$ and $\xi=\xi_g$ is only possible if $(\eta,\xi)$ satisfy \eqref{eq:structure}. In the general setting with a (non-matrix) Lie group $G$, Proposition~\ref{prop:structure} can be seen as a consequence of the Maurer-Cartan structure equation and the converse result can be obtained using the fundamental theorem of calculus,
see \cite[Chapter 3, Theorem 6.1]{sharpe2000differential}
or \cite[Lemma 4.3]{rodriguez2020boundaryfeedbackstabilizationintrinsic}.
\end{remark}

\begin{remark}\label{rem:structure-minimal}
One can see that \eqref{eq:structure} only requires $g\in C^{1,1}_G$ (through the identification $C^{1,1}_G\simeq D^{1,1}_G$) to be defined.\end{remark}

\begin{remark}\label{ex:formese(k)} In the case $G=\SE(3)$, we use \eqref{def : Lie algebra isomorphism} to
express the twists \eqref{eq:VT}-\eqref{eq:ST} as vectors of $\mathbb{R}^6$. For $g\in C^{2,2}_{\SE(3)}$ we start from \eqref{def:SE(k)} to deduce that the position of the centerline $p$ belongs to
$\in C^{2,2}_{\mathbb{R}^3}$ and the rotation matrix representing the deformation of the cross-sections $R$ belongs to $\in C^{2,2}_{\SO(3)}$.
Then using \eqref{def : Lie algebra isomorphism} we can identify the twists as follows
\begin{equation}
\label{def:twistSE(k)}
    \eta_g \simeq \begin{pmatrix}
        \Omega \\ V
    \end{pmatrix}, \quad \xi_g \simeq \begin{pmatrix}
        K \\ \Gamma
    \end{pmatrix},
\end{equation}
where $\Omega$ and $K$ are called the \emph{angular space rate} and \emph{angular space rate} respectively and verify $\widehat{\Omega} = R^T\partial_t R$ and
$\widehat{K} = R^T\partial_x R$; $V=R^T\partial_t p$ and $\Gamma=R^T\partial_x p$ are the \emph{linear velocity} and the \emph{linear space rate} respectively. For the rest of the paper, the twists $\eta_g$ and $\xi_g$ will be considered by a slight abuse of notation as elements of either $\mathfrak{se}(3)$ or $\mathbb{R}^{6}$.

Similar representation of \eqref{def:twistSE(k)} holds for $\mathfrak{se}(2)$ using the isomorphism \eqref{def : Lie algebra isomorphism}, with $\Omega,K\in \mathbb{R}$ and $V,\Gamma\in \mathbb{R}^2$.
\end{remark}

\subsection{Modeling of constraints}

In this section, we introduce constraints to describe physical systems with restricted movements. We consider a framework encompassing both holonomic
and nonholonomic constraints, cf. \cite{bloch2025infinite}.

\begin{definition}\label{def:constraint}
A \emph{constraint} is a pair of functions $(A,a)$ with \begin{equation}
\label{def:constraint1}
    A : [0,T]\times C^2_G \rightarrow \mathscr{L}_c\big(C^2_\mathfrak{g};E\big),
    \quad
a :[0,T] \times C^2_G \rightarrow E,
\end{equation} where $(E,\lVert\cdot\rVert )$ is a Banach space called \emph{constraint space}, and $(A,a)$ satisfy
$a(t,g)\in \operatorname{Im}(A(t,g))$ for all $(t,h)\in [0,T]\times C^2_G$.
A movement $g\in C^{2,2}_G$ satisfies the constraint $(A,a)$ at time $t\in[0,T]$ if
    \begin{equation}
    \label{eq : constrainte}
        A\big(t, g(t,\cdot)\big)\eta_g(t,\cdot) + a\big(t,g(t,\cdot)\big)=0,
    \end{equation}
and $g\in C^{2,2}_G$ satisfies the constraint $(A,a)$ if is satisfies it at every time $t\in[0,T]$.
\end{definition}

We next provide several classical types of constraints.
\begin{definition}
\begin{itemize}
\item[(i)] A constraint $(A,a)$ is  \emph{time independent} if it does not depend on time $t\in[0,T]$.

\item[(ii)] A constraint $(A,a)$ is \emph{left-invariant} if there exists  a reduced constraint
$(A_r,a_r) : [0,T]\times C^2_G\rightarrow \mathscr{L}_c\big(C^2_\mathfrak{g};E\big)\times E$
such that
    \begin{equation}\label{eq:constraint-left-invariant}
    \begin{aligned}
        &A(t,h)=A_r(t,\xi_h), \quad a(t,h)=a_r(t,\xi_h) \quad \forall (t,h) \in [0,T]\times C^2_G.
        \end{aligned}
    \end{equation}
    \item [(iii)] A constraint $(A,a)$ is \emph{holonomic} if it is integrable, i.e there exists a differentiable function $\Phi : [0,T] \times C^2_G \rightarrow E$ such that
    \begin{equation}\label{eq:holonomic}
        \frac{d}{dt}\Phi\big(t,g(t,\cdot)\big)=A\big(t,g(t,\cdot)\big)\eta_g(t,\cdot)+a\big(t,g(t,\cdot)\big)\quad
        \forall (t,g) \in [0,T]\times C^{2,2}_G.
    \end{equation}
    A constraint which is not holonomic is called \emph{nonholonomic}.
\end{itemize}
\end{definition}

\begin{remark}
\label{remark : holo or not holo fred}
Constraints of the form (\ref{eq : constrainte}) and ${\Phi\big(t,g(t,\cdot)\big)=0}$ are called time-dependent bilateral constraints of kinematic and geometric form, respectively (\cite{Bloch, BoyerPorezMauny2018, Murray2017}).
In the first form, the prevented degrees of freedom are
parameterized by configuration variables (positions), while in the second form, they are parameterized with velocities. While non-holonomic constraints can only be set in kinematic form, (\ref{eq:holonomic}) shows
that holonomic constraints can be set in both geometric and kinematic form. Replacing holonomic constraints originally defined in geometric form, by constraints in kinematic
form in a dynamic formulation is always possible under the condition that the initial configuration of the system satisfies the
original geometric constraints (so that the system will remain on the level set fixed by original geometric constraints)\footnote{
As we shall see later, the dynamic formulation then takes the form of a system of partial differential
equations deduced from one of the principles of dynamics (in our case a least action principle), supplemented with a set of algebraic equations realized by the constraints. Such a system is a differential-
algebraic system and shifting holonomic constraints from their geometric to their kinematic form corresponds to decreasing the index of this differential-algebraic system of one unit, where, roughly speaking, the index is the number of derivations of the equations of formulation, required by the removal of constraints.
}.
Based on these considerations, we will consider constraints, holonomic or not, in the general form (\ref{eq : constrainte}) in order to derive a unique dynamic formulation holding in any case.
\end{remark}

\begin{remark}
\label{remark : int ext fred}
Constraints of the form (\ref{eq : constrainte}) can also be classified as either external or internal. External constraints restrict the relative degrees of freedom between one or more cross-sections of a rod and the external world, typically other bodies or obstacles. A typical example is a rod attached at one of its ends to a rigid body through a revolute joint or a rod clamped in a rigid wall. Internal constraints, in contrast, restrict the relative degrees of freedom between cross-sections belonging to the same rod \cite{Antman1984},\cite{boyer2022extended}. Depending on the cross-sections involved, internal constraints may be further classified as local or non-local. Non-local constraints establish kinematic relationships between distinct, potentially distant, cross-sections of the rod, as in the case of a rod connected at its both ends in a loop. Local constraints, on the other hand, act between infinitesimally close cross-sections and therefore impose kinematic restrictions at the local level. In the former case (non-local) the constraints concern the time-twists $\eta_g$ and are captured by the part ``$A\big(t, g(t,\cdot)\big)\eta_g(t,\cdot)$'' of (\ref{eq : constrainte}), while in the second they concern the space-twists $\xi_g$ through the part ``$a\big(t,g(t,\cdot)\big)$'' of (\ref{eq : constrainte}).
\end{remark}


We next provide several examples of constraints.

\begin{exemple}[Reference trajectory]\label{reference trajectory}
Consider \( g_r \in C^{2,2}_G \) referred to as a reference trajectory.
We say that a movement \( g \in C^{2,2}_G \) follows the reference trajectory $g_r$ on $[0,T]$ if it is equal to it, i.e.,
\begin{equation}
\label{eq : reference trajectory 1}
    g(t,x) - g_r(t,x)=0, \quad \forall(t,x)\in [0,T]\times [0,1].
\end{equation}
By time-differentiating \eqref{eq : reference trajectory 1} and left-multiplying by \( g^{-1}=g_r^{-1} \), we get
\begin{equation}
    \label{eq:prescribed constraint}
      \eta_g(t,x)-\eta_r(t,x)=0,
      \quad \forall(t,x)\in [0,T]\times [0,1],
\end{equation}
where $\eta_r(t,x)=g_r^{-1}(t,x)\partial_tg_r(t,x)$.
Hence if $g$ follows the reference trajectory $g_r$, it also satisfies the so-called reference trajectory
constraint $(A,a)$ defined by
\begin{gather}
\label{eq:reference_trajectory}
     A(t,h)\eta := \eta, \quad
     a(t,h) := -\eta_r
\end{gather}
for all $(t,h,\eta)\in [0,T]\times C^{2}_G \times C^2_{\mathfrak{g}}$ and with the constraint space $(C^2_{\mathfrak{g}},\lVert \cdot \rVert_{C^2})$.
Remark that, for all $(t,h)\in[0,T]\times C_G^{2}$, the constraint
satisfies $a(t,g)\in \operatorname{Im}A(t,h)$ as $A(t,h)$ is surjective.
\end{exemple}

\begin{exemple}[Reference shape and rigid body constraint]\label{Rigid body constraint}
Consider a reference \emph{shape} $\xi_r\in C^{2,1}_{G}$. We say that the motion $g\in C^{2,2}_G$ obeys the reference shape constraint if
\begin{equation}
    \xi_g(t,x) - \xi_r(t,x)=0 \quad \forall(t,x)\in[0,T]\times[0,1]
\end{equation}
where $\xi_g$ is the space twist~\eqref{eq:ST} associated with $g$.
Differentiating with respect to time $t$ and using the structure equation~\eqref{eq:structure},
we deduce that $g$ if follows the reference shape $\xi_r$, it satisfies the so-called reference shape constraint $(A,a)$ defined by     \begin{equation}\label{eq:reference shape}
    A(t,h)\eta :=   \operatorname{ad}_{\xi_h}\eta +\partial_x\eta , \quad
    a(t,h) := -\partial_t\xi_r,
\end{equation}
 for all $(t,h,\eta)\in [0,T]\times C^{2}_G \times C^2_{\mathfrak{g}}$.

 For $k\in \mathbb{N}$, $G=\SE(k)$ and for a reference shape $\xi_r$ that is constant in time (i.e. $\partial_t \xi_r=0$), a movement $g$ subject to this constraint is \emph{rigid} in the sense that the relative position of the particles of the associated beam, do not depend on time. We prove this assertion in Appendix~\ref{lemma :Equivalent rigid body}.
 In this case, the constraint $(A,a)$ is also called a rigid body constraint:
\begin{equation}\label{eq:rigid_body_constraint_time}
    A(t,h)\eta =\operatorname{ad}_{\xi_h} \eta+\partial_x\eta, \quad a(t,h) = 0
\end{equation}
for $(t,h,\eta)\in [0,T]\times C^{2}_G \times C^2_{\mathfrak{g}}$. Note that, for both the reference shape constraint~\eqref{eq:reference shape} and the rigid body constraint~\eqref{eq:rigid_body_constraint_time}, the constraint space is $(C^1_{\mathfrak{se}(k)},\lVert \cdot \rVert_{C^1})$ and $a(t,h)\in \operatorname{Im}A(t,h)$ as $A(t,h)$ is surjective for all $(t,h)\in[0,T]\times C^2_G$.

\end{exemple}

For the sequel, we introduce a specific configuration $g_0 \in C^2_G$, which can represent the pose of a movement at rest. We refer to it as the pose at rest.

\begin{definition}\label{Cosserat beam at rest}
   Let $\xi_0\in C^1_\mathfrak{g}$ be the reference space-twist field associated to the reference configuration $g_0\in C^2_G$ defined in Remark~\ref{remark : g0 fred}
    \begin{equation}\label{eq:def:ST0:a}
        \xi_0(x) := g_0^{-1}(x)\,\partial_x g_0(x).
    \end{equation}
\end{definition}

\begin{remark}
    For $G=\SE(3)$ or $\SE(2)$, as explained in Remark~\ref{ex:formese(k)}, the space twist at rest is represented as the vector
    \begin{equation}
    \label{def:ST0}
        \xi_0=\begin{pmatrix}
            K_0 \\ \Gamma_0
        \end{pmatrix},
    \end{equation}
where $K_0$ and $\Gamma_0$ are called the reference rotational and translational rate respectively.
\end{remark}

\begin{exemple}[Kirchhoff beam]\label{ex:kirchhoff}
A moving beam represented by a movement $g\in C^{2,2}_{\SE(3)}$ is said to be a Kirchhoff beam if its stretching component $\Gamma$ is equal to the stretching component at rest $\Gamma_0$, defined respectively in \eqref{def:twistSE(k)} and \eqref{def:ST0}, i.e.,
 \begin{equation}
     \Gamma(t,x)=\Gamma_0(x), \quad \forall(t,x)\in[0,T]\times[0,1].
 \end{equation}
Introducing the projection onto the stretching component $P_3:=\begin{pmatrix}
    0_{3\times3} & I_3
\end{pmatrix}$,
we define the function \begin{equation}\label{Exemple constrainte 1}
    \Phi(\xi): = P_3(\xi - \xi_0)
\end{equation}
which vanishes when $g\in C^{2,2}_{\SE(3)}$ is a Kirchhoff beam.
Differentiating in time the equation $
    \Phi(\xi_g) = 0,
$ and using the structure equation \eqref{eq:structure}, we obtain that any Kirchhoff beam $g\in C^{2,2}_{\SE(3)}$ satisfies the Kirchhoff constraint $(A,a)$ defined by
\begin{equation}
\label{eq:Kirchhoff constraint}
     A(t,h)\eta
: =  P_3\operatorname{ad}_{\xi_h} \eta+P_3\partial_x\eta,
\quad a(t,h) := 0,
\end{equation}
for all $(t,h,\eta) \in [0,T] \times C^2_{\SE(3)} \times C^2_{\mathfrak{se}(3)}$ and with the constraint space $(C^1_{\mathbb{R}^3},\lVert \cdot \rVert_{C^1})$. This constraint is time invariant, left invariant and holonomic as it derives from the function $\Phi(t,g(t,\cdot))=P_3\xi_g(t,\cdot)$. 

\end{exemple}
\begin{exemple}[Follow the leader in 2D]\label{Follow-the-leader}
Elongated animals such as snakes exhibit characteristic modes of locomotion on the ground.
One of them consists in lateral undulation, in which the body moves along its backbone.
This characteristic movement is referred to as \emph{follow-the-leader motion}, cf. \cite{boyer2011macrocontinuous}. Let us model these elongated animals as movements  \( g \in C^{2,2}_{\SE(2)} \) with a backbone's position \( p\in C^{2,2}_{\mathbb{R}^2} \). To move along its backbone, the snake needs to have no lateral velocity: the position of the snake $p\in C^{2,2}_{\SE(2)}$ satisfies that $\partial_t p(t,x)$ is collinear with $\partial_x p(t,x)$ for all $(t,x)\in[0,T]\times[0,1]$.
In particular, as proved Lemma \ref{lemma:Follow-the-leader} in Appendix \ref{Computation Follow the leader}, the functions $\partial_tp$ and $\partial_x p$ are collinear if and only if they satisfy $(P_2\xi_g)^T \mathrm{Q} P_2\eta_g=0$,
where the matrices $\mathrm{Q}$ and $P_2$ were introduced in Section \ref{sec:definitions_and_notations} and the twists $\eta_g$, $\xi_g$ are defined in \eqref{def:twistSE(k)}.
Hence, the movement of a snake $g$ obeys a follow-the-leader motion if and only if it satisfies the constraint $(A,a)$ with
\begin{equation}
\label{eq:Follow-The-Leader Constraint}
   A(t,h)\eta
: =(P_2\xi_h)^T \mathrm{Q} P_2\eta,
\quad a(t,h) := 0
\end{equation}
for all $(t,h,\eta)\in [0,T]\times C^2_{\SE(2)} \times C^2_{\mathfrak{se}(2)}$ and with the constraint space $(C^1_\mathbb{R},\lVert\cdot\rVert_{C^1})$.
\end{exemple}

\begin{exemple}[Embedded joint]\label{Embedding joint} A movement $g\in C^{2,2}_G$ may be embedded in a joint that forces the movement to be in the prescribed form $\widetilde{g}\in C^2_G$ at a section $x_*\in (0,1)$. In this case, the movement $g$ must satisfy the equation
\begin{equation}
    g(t,x_*)=\widetilde{g}(t),\quad \forall t\in[0,T].
\end{equation}
By time differentiating the above equation and multiplying by $g(t,x_*)^{-1}=\widetilde{g}(t)^{-1}$, we obtain the equation
\begin{equation}
    g^{-1}\partial_t g(t,x_*)-\widetilde{g}^{-1}\partial_t \widetilde{g}(t)=0 \quad \forall t\in[0,T].
\end{equation}
Thus, a movement $g$ with a prescribed form $\widetilde{g}$ at section $x_*$ satisfies the embedding joint constraint $(A,a)$ defined by
\begin{equation}\label{eq:embedding joint}
   A(t,h)\eta : =\eta(x_*), \quad  a(t,h):= \widetilde{g}(t)^{-1}\partial_t\widetilde{g}(t)
\end{equation}
for all $(t,h,\eta)\in [0,T]\times C^{2}_G \times C^2_\mathfrak{g}, $ and with the constraint space $(\mathfrak{g},\lVert\cdot\rVert)$. Note that, for all $(t,h)\in[0,T]\times C_G^{2}$, the constraint
satisfy $a(t,g)\in \operatorname{Im}A(t,h)$ as $A(t,h)$ is surjective.
One can also describe the above constraint
as a punctual constraint, at $x=x_*$. It can easily be extended to the case of a punctual constraint at several points.
\end{exemple}

\section{Cosserat-Poincaré Equations}\label{section:Cosserat-Poincaré Equations}

In this section, we derive Cosserat-Poincar\'e equations which are supposed to be verified by the motion of flexible beams, cf. \cite{BoyerPrimault,boyer2010poincare,boyer2017poincare}.
These equations are solutions of a variational problem and, in order to define it, we first introduce the concepts of
Lagrangian and forces. Then, by adding assumptions on the Lagrangian and the forces in
Section \ref{left-invariant Lagrangian section}, we reformulate the variational problem into a strong form and obtain the Cosserat-Poincar\'e equations as a system of
nonlinear hyperbolic PDEs.
In Section \ref{Section Energy estimate}, we study the  mechanical energy associated with the variational problem. We finally give in the Section \ref{Illustrative examples} a list of examples using different forces and constraints. In each of these examples, we aim at characterizing the appropriate unknowns functions defining the dynamics together with the corresponding system of PDEs verified by these unknowns.

\subsection{Variational approach}\label{section:variational}

Throughout this paper, a \emph{Lagrangian} $L$ is a function of the form \begin{equation}\label{def:Lagrangian}L\in \mathscr{C}^0([0,T]\times C^2_G\times C^2_\mathfrak{g};\mathbb{R})\end{equation}
that induces an \textbf{action} $S$ defined by
    \begin{equation}
    \label{def:action}
        \begin{aligned}
            S : C^{2,2}_G& \rightarrow \mathbb{R};
             \quad
             g \mapsto \int_0^T L\big(t,g(t,\cdot),\eta_g(t,\cdot) \big)dt.
        \end{aligned}
    \end{equation}
Last, we call \textbf{forces} functions of the form
\begin{equation}
\label{def:force} F : [0,T]\times C^2_G \times C^2_\mathfrak{g} \rightarrow \big(C^1_\mathfrak{g}\big)^*,\end{equation}
where $\big(C^1_\mathfrak{g}\big)^*$ stands for the dual of the Banach space $(C^1_\mathfrak{g})$
(see Section \ref{sec:dualCk} in Appendix).

\begin{definition}
 A Lagrangian $L$ (resp. a force $F$) is \textit{left-invariant} if
there exists a reduced Lagrangian $L_r:[0,T]\times C^1_\mathfrak{g}\times C^2_\mathfrak{g}\rightarrow \mathbb{R}$ (resp. reduced force $F_r:[0,T]\times C^1_\mathfrak{g}\times C^2_\mathfrak{g}\rightarrow (C^1_\mathfrak{g})'$) such that
\begin{equation}
    L(t,h,\eta)=L_r(t,\xi_h,\eta) \quad \bigl(\text{resp. }
    F(t,h,\eta)=F_r(t,\xi_h,\eta)\bigr),
\end{equation}
for all $(t,h,\eta) \in [0,T]\times C^2_G \times C^2_\mathfrak{g}$.
\end{definition}

\begin{definition}
\begin{itemize}
 \item[(i)] A force $F$ is \textit{distributed} if there exists $\mathcal{F}:[0,T]\times[0,1]\times G\times\mathfrak{g}^2\rightarrow\mathfrak{g}$ such that
    \begin{equation}\label{eq: viscous force}
        \langle F(t,h,\eta),v \rangle
        = \int_0^1 \langle\mathcal{F}(t,x,h(x),\xi_h(x),\eta(x)),v(x) \rangle dx,
        \quad \forall v\in C^1_\mathfrak{g},
        \ \forall (t,h,\eta)\in[0,T]\times C^2_G\times C^2_\mathfrak{g}.
    \end{equation}
    \item[(ii)] A force $F$ is \textit{punctual} if there exists  $x_0\in[0,1]$ and a function $F_p : [0,T]\times G \times \mathfrak{g}^2 \rightarrow \mathfrak{g}$ such that
     \begin{equation}\label{eq:punctual force}
        \langle F(t,h,\eta),v \rangle
        = \langle F_p(t,h(x_0),\xi_h(x_0),\eta(x_0)),v(x_0)\rangle, \quad \forall v\in C^1_\mathfrak{g}\ \forall (t,h,\eta)\in[0,T]\times C^2_G\times C^2_\mathfrak{g}.
    \end{equation}
\end{itemize}
\end{definition}

\begin{exemple}
A force produced by an actuator at a boundary point $x_b\in\{0,1\}$ of the configuration can be modeled as a punctual force $F$ for which there exists $F_b : [0,T]\times C^2_G \rightarrow \mathfrak{g}$ such that
 \begin{equation}
        \langle F(t,h,\eta),v \rangle
        := \langle F_b(t,h(x_b)),v(x_b)\rangle \quad \forall v\in C^1_\mathfrak{g}\ (t,h,\eta)\in[0,T]\times C^2_G\times C^2_\mathfrak{g}.
    \end{equation}
\end{exemple}

\begin{exemple}
The \emph{viscous reaction force} $F$ can be modeled in fluids with low Reynolds numbers as a force depending linearly on the time  twist $\eta$ for which there exists $\mu\in \mathscr{L}(\mathfrak{g},\mathfrak{g})$ such that
\begin{equation}
\label{def:viscous force}
    \langle F(t,h,\eta),v \rangle
        := \int_0^1 \langle - \mu \eta(x),v(x) \rangle dx
        \quad \forall v\in C^1_\mathfrak{g}\ \forall
        (t,h,\eta)\in[0,T]\times C^2_G\times C^2_\mathfrak{g},
\end{equation}
and $\mu$ satisfies $\langle \mu \eta, \eta \rangle \geq 0$ for all $\eta\in \mathfrak{g}$. The force \eqref{def:viscous force} is time independent, left-invariant and distributed.
\end{exemple}

To characterize the dynamics of a $g\in C^{2,2}_G$, we rely on the generalized Hamilton principle (cf. \cite{meirovitch2010methods}) which is of variational nature and goes as follows in the case of this paper.

\begin{definition}\label{def:Weak form}
Let us consider a Lagrangian $L\in \mathscr{C}^0([0,T]\times C^2_G\times C^2_\mathfrak{g};\mathbb{R})$ with the induced action $S: C^{2,2}_G\rightarrow \mathbb{R}$, a force $F:[0,T]\times C^2_G \times C^2_\mathfrak{g} \rightarrow (C^1_\mathfrak{g})^*$ and a constraint $(A,a):[0,T]\times C^2_G \to \mathscr{L}_c\big(C^1_{\mathfrak{g}};E\big)\times E$ with a constraint space $E$. A movement $g\in C^{2,2}_G$
is a said to  be \emph{solution to the weak form of Cosserat-Poincaré equation} if it satisfies
\begin{equation}\label{eq:Hamilton equation}
\left\{
\begin{aligned}
&\mathcal{D}S(g)(v)
=-\int_0^T \langle F\big(t,g(t,\cdot),\eta_g(t,\cdot)\big),v(t,\cdot)\rangle\, dt+  \int_0^T\langle A\big(t,g(t,\cdot)\big)^*\lambda(t,\cdot)),v(t,\cdot)\rangle\, dt
\quad & &\hspace{-3mm}\forall v\in C^1_0([0,T];C^1_\mathfrak{g}) \\
&A\big(t,g(t,\cdot)\big)\eta_g(t,\cdot)+a\big(t,g(t,\cdot)\big)=0 & &\hspace{-3mm}\forall t\in[0,T],
\end{aligned}
\right.
\end{equation}
where
\begin{itemize}
\item $\lambda \in C^1_{E^*}$ is the Lagrange multiplier induced by the constraint $(A,a)$,
\item $C^1_0([0,T],C^1_{\mathfrak{g}})=\{v\in C^{1,1}_{\mathfrak{g}}\ |\ v(0,\cdot)=v(T,\cdot)=0\}$,
\item $\mathcal{D}S(g)(v):=\frac{d}{dh}S(\gamma(h))\big|_{h=0}$,
where $\gamma$ is a $C^1$-curve
in $C^{2,2}_{G}$
such that $\gamma(0)=g$ and $\gamma'(0)=gv$.
\end{itemize}
\end{definition}

\begin{remark} Equation~\eqref{eq:Hamilton equation} can be seen as a extension of the Lagrange-d'Alembert principle. Indeed, it is shown in \cite{bloch2025infinite} that a movement $g$ submitted to the constraint $(A,a)$ must satisfy \begin{equation}
    \mathcal{D}S(g)(v)
=-\int_0^T \langle F\big(t,g(t,\cdot),\eta_g(t,\cdot)\big),v(t,\cdot)\rangle\, dt,
\end{equation} for all $v\in C^1_0([0,T];C^1_\mathfrak{g})$ satisfying $A(t,h) v(t,\cdot) =0$. Then the existence of Lagrange multiplier $\lambda$ follows if $\operatorname{Im} A(t,g)$ is weak-* closed for all $(t,g)\in [0,T] \times C^2_G$ as it is the case for the constraints \eqref{eq:reference_trajectory}, \eqref{eq:rigid_body_constraint_time}, \eqref{eq:Kirchhoff constraint}, \eqref{eq:embedding joint}, see \cite{bloch2025infinite} and \cite{jarchow2012locally} for more  details.
\end{remark}

\begin{remark}
\label{remark : least action hol not hol fred}
In this paper, we aim to produce a general framework that can be applied to both holonomic and non-holonomic constraints. When constraints are holonomic, the configuration space of the constrained rod is a submanifold of its unconstrained configuration space, named admissible submanifold, where by ``admissible'' we mean ``compatible with constraints''. In this case, one can define trajectories between any two arbitrary points of this space, and the least action principle that compare the value of the action functional along such trajectories, can be applied. Technically, one can applied an augmented Lagrangian approach by adding to the unconstrained Lagrangian, a ``constraint potential energy'', whose variation defines the virtual work of some reaction generalized forces ensuring the preservation of constraints. In the case, of non-holonomic constraints, there is no such submanifold, but only a field of admissible hyper-planes defined in each point of the unconstrained configuration manifold, i.e, an admissible non-integrable distribution (in the meaning of Frobenius theorem). Therefore, when applying least action principle, the trajectories tied between any two configurations are not defined globally, but locally by the fact that in each point of the trajectory, the velocity needs to belong to the kernel of kinematic constraints (see Eq. \eqref{eq : constrainte}). In this geometric context, the constraint forces that preserve non-holonomic constraints do not derive from a potential energy globally defined on the candidate trajectories, as in the holonomic case, but can only be defined locally in each point as belonging to the dual of admissible velocities. In short, while the constraint forces associated to holonomic constraints are conservative, those associated to non-holonomic constraints are not, and need to be entered in a ``virtual work contribution''. This can be done by replacing the (integral) least action principle by (differential) D'Alembert's principle of virtual works, or alternatively, by invoking a least action principle extended to non-conservative external forces as that proposed in \cite{meirovitch2010methods}. We here adopt this later approach, which allows to keep benefits of least action principle while addressing both holonomic and non-holonomic constraints in an unique framework.
\end{remark}

\begin{remark}
\label{rem:reaction force}
The covector $\lambda$ represent reaction forces (and couples) exerted in the ambient Euclidean space (the physical space) $\mathcal{E}\simeq \R^{k}$, generally expressed in some local frames adapted to the contact.
Therefore they are generalized functions taking values in the dual space $\mathcal{E}^{\star}$. Based on this first remark, $\lambda$ has a concrete (physical) meaning. In contrast, $A(t,g)^*\lambda$ represents the pullback of the same reaction forces to the dual space of the configuration space here and is what we name the generalized reaction forces, i.e., the forces imposed
by the constraints in $\mathcal{E}$, ``seen from our configuration space''. In our case, these generalized forces are wrenches (\cite{Murray2017}), i.e., duals of twists, named ``reaction wrenches''. \end{remark}

We next derive a strong form of the Cosserat-Poincar\'e equation (i.e., getting rid of  variations $v$ appearing in \eqref{eq:Hamilton equation}), under the following three assumptions on the
Lagrangian, the forces and the constraints.

\begin{Assumption}\label{Ass : lagrangian}
    The Lagrangian $L$
    is \textit{distributed}, i.e.,
    \begin{equation}
        L\big(t,h,\eta\big)=\int_0^1 \mathcal{L}(t,x,h(x),\xi_h(x),\eta(x))\,dx \quad \forall (t,x,h,\eta) \in [0,T]\times[0,1]\times C^2_G \times C^2_\mathfrak{g},
    \end{equation}
    where $\mathcal{L}\in \mathscr{C}^1([0,T]\times      [0,1]\times G \times \mathfrak{g}^2;\mathbb{R})$ is called the {\it Lagrangian density}.
\end{Assumption}

\begin{Assumption}\label{Ass : force}
    The forces $F : [0,T]\times C^2_G \times C^2_\mathfrak{g} \rightarrow \big(C^1_\mathfrak{g}\big)^* $ has the following form
    \begin{equation}\begin{aligned}\label{eq:Ass : force:1}
    \langle F\big(t,h,\eta\big),v\rangle = &\int_0^T \langle \mathcal{F}\big(t,x,h(x),\xi_h(x),\xi'_h(x),\eta(x)\big),v(x)\rangle dx  \\
    &+\langle F_0\big(t,h(0),\xi_h(0),\eta(0)\big),v(0)\rangle+\langle F_1\big(t,h(1),\xi_h(1),\eta(1)\big),v(1)\rangle\end{aligned}\end{equation}
    for all $(t,h,\eta)\in [0,T]\times C^2_G \times C^2_\mathfrak{g}$, $v\in C^1_\mathfrak{g}$, with  \begin{equation}
    \mathcal{F}\in \mathscr{C}^0([0,T]\times[0,1]\times G\times \mathfrak{g}^3;\mathfrak{g}), \quad  F_0,F_1\in \mathscr{C}^2([0,T]\times G\times \mathfrak{g}^2;\mathfrak{g})\end{equation}
    called \emph{distributed part} and \emph{punctual parts} of the force $F$ respectively.
\end{Assumption}

\begin{Assumption}\label{Ass : constraint}
The constraint space $E$ is equal to the Banach space $(C^k_{\mathbb{R}^l},\lVert \cdot \rVert_{C^k})$ for some integer $k\geq 1$. Moreover, the constraint $(A,a):[0,T]\times C^2_\mathfrak{g} \rightarrow \mathscr{L}_c\big(C^2_\mathfrak{g};C^k_{\mathbb{R}^l}\big)\times C^k_{\mathbb{R}^l}$ has the following form
    \begin{equation}\label{eq:Ass : constraint:1}
        A(t,h)\eta
        := \mathcal{C}^1(t,h,\xi_h)\eta + \mathcal{C}^2(t,h,\xi_h)\eta', \quad  a(t,h)   
=\widetilde{a}\big(t,h,\xi_h\big)
    \end{equation}
 for $(t,h,\eta)\in [0,T]\times C^2_G \times C^2_\mathfrak{g}$, $l\in \mathbb{N}$, $k\in \{1,2\}$ and with \begin{equation}\label{def : a,C1,C2} \mathcal{C}^1,\mathcal{C}^2\in \mathscr{C}^1([0,T]\times G \times \mathfrak{g}; \mathscr{L}(\mathfrak{g},\mathbb{R}^l)), \quad \widetilde{a}\in \mathscr{C}^1([0,T]\times G \times \mathfrak{g};\mathbb{R}^l).\end{equation}
Here the constraint $(A,a)$ is said to be \emph{distributed} with density constraint function $\mathcal{C}\in \mathscr{C}^1([0,T]\times G\times \mathfrak{g}^3; \mathbb{R}^l)$ defined as
 \begin{equation}\label{eq:density-constraint}
     \mathcal{C}(t,h,\xi_h,\eta,\eta'):=\widetilde{a}\big(t,h,\xi_h\big)+\mathcal{C}^1(t,h,\xi_h)\eta + \mathcal{C}^2(t,h,\xi_h)\eta'.
 \end{equation}
\end{Assumption}

\begin{Assumption}\label{ass:injecting_lambda:1}
The constraint space will depend on the regularity of $\mathcal{C}^1$ and $\mathcal{C}^2$.
On the other hand, we will need some regularity for the Lagrange multiplier $\lambda(t,\cdot)$ as a function of $x\in [0,1]$ for every $t\in [0,T]$. For that purpose, we consider the smooth injection $i :C^1_{\mathbb{R}^l}\ni f \mapsto \int_0^1 \langle f(x), \cdot \rangle\, dx\in (C^1_{\mathbb{R}^l})^*$. We then simply assume from now on  that $\lambda \in \mathscr{C}^{1,1}_{\mathbb{R}^l}$ and identify $\lambda(t,\cdot)$ with $i(\lambda(t,\cdot))$ for every $t\in [0,T]$.
\end{Assumption}

Under the above assumptions, we obtain the strong form of the Cosserat Poincaré equation presented below.
We use $\mathcal{L}_i$'s to denote the partial derivatives (differentials) of the Lagrangian density $\mathcal{L}$ with respect to the $i$-th variables.

 \begin{theorem}
 \label{least action thm} Consider a Lagrangian $L$, a force $F$ and a constraint $(A,a)$ subject to Assumptions \ref{Ass : lagrangian}, \ref{Ass : force} and \ref{Ass : constraint}.
 Then a solution $g\in C^{2,2}_G$ of the weak form of Cosserat-Poincaré equation \eqref{eq:Hamilton equation}
 solves the \emph{strong form of the Cosserat-Poincaré equation
 which is the partial differential equation}
    \begin{equation}
        \begin{aligned} \label{equation Cosserat dyn 1}
       \frac{d}{dt}\mathcal{L}_5+\frac{d}{dx}\mathcal{L}_4
       -g^*\mathcal{L}_3 - \operatorname{ad}_{\eta_{g}}^*\mathcal{L}_5-\operatorname{ad}_{\xi_{g}}^*\mathcal{L}_4-\mathcal{F}   +\mathcal{C}^{1*}\lambda - \partial_x\big(\mathcal{C}^{2*}\lambda\big)=0 \quad \text{ in } (0,T)\times(0,1)
    \end{aligned} \end{equation}
    with the constraint

    \begin{equation}
        \label{Constraint Cosserat Poincaré}
         \widetilde{a} + \mathcal{C}^1\eta_g+ \mathcal{C}^2\partial_x \eta_g=0 \quad \text{ in } (0,T)\times(0,1)
    \end{equation}
    and the boundary conditions for all $t\in[0,T]$ given by
    \begin{equation}\begin{aligned} \label{equation Cosserat boun 1}\mathcal{L}_3(t,1,g(t,1),\xi_g(t,1),\eta_g(t,1))&=-F_1(t,g(t,1),\eta_g(t,1),\xi_g(t,1))+\mathcal{C}^2(t,g(t,1),\xi_g(t,L))^*\lambda(t,1),  \\
    \mathcal{L}_3(t,0,g(t,0),\xi_g(t,0),\eta_g(t,0))&=F_0(t,g(t,0),\eta_g(t,0),\xi_g(t,0))+\mathcal{C}^2(t,g(t,0),\xi_g(t,0))^*\lambda(t,0).\\
    \end{aligned}\end{equation}

\end{theorem}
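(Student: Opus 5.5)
The plan is to compute $\mathcal{D}S(g)(v)$ explicitly for a variation $v\in C^1_0([0,T];C^1_\mathfrak{g})$ and then integrate by parts in $t$ and $x$. This moves all derivatives off $v$. The strong form then follows from a fundamental lemma of the calculus of variations, applied first to interior test functions and then to boundary values.

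\textbf{Step 1: variations of the twists.} Take a $C^1$ curve $\gamma(h)$ in $C^{2,2}_G$ with $\gamma(0)=g$ and $\gamma'(0)=gv$; for instance $\gamma(h)=g\exp(hv)$ suitably regularized. Writing $\delta=\frac{d}{dh}|_{h=0}$, a direct matrix computation, analogous to the proof of Proposition~\ref{prop:structure}, gives
\[
\delta\eta_{\gamma}=\partial_t v+[\eta_g,v],\qquad \delta\xi_\gamma=\partial_x v+[\xi_g,v],\qquad \delta\gamma = g v .
\]
By Assumption~\ref{Ass : lagrangian}, with $\mathcal{L}\in\mathscr{C}^1$ and dominated differentiation under $\int_0^T\!\int_0^1$, we get
\[
\mathcal{D}S(g)(v)=\int_0^T\!\!\int_0^1 \Big(\langle g^*\mathcal{L}_3, v\rangle+\langle \mathcal{L}_4,\partial_x v+\ad_{\xi_g}v\rangle+\langle \mathcal{L}_5,\partial_t v+\ad_{\eta_g}v\rangle\Big)\,dx\,dt .
\]
Here $g^*\mathcal{L}_3$ denotes the pullback of $\mathcal{L}_3\in T^*_gG$ to $\mathfrak{g}^*$ via $v\mapsto gv$. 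The sign conventions in \eqref{equation Cosserat dyn 1} fix which of the two boundary terms is named $\mathcal{L}_3$ versus $\mathcal{L}_4$ in \eqref{equation Cosserat boun 1}; I would track this carefully.

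\textbf{Step 2: integration by parts.} I integrate by parts in $t$, using $v(0,\cdot)=v(T,\cdot)=0$, and in $x$, which leaves boundary terms at $x=0,1$. This turns the left-hand side into
\[
\int_0^T\!\!\int_0^1\big\langle g^*\mathcal{L}_3-\tfrac{d}{dt}\mathcal{L}_5-\tfrac{d}{dx}\mathcal{L}_4+\ad^*_{\eta_g}\mathcal{L}_5+\ad^*_{\xi_g}\mathcal{L}_4 ,v\big\rangle
+\int_0^T\big[\langle\mathcal{L}_4,v\rangle\big]_{x=0}^{x=1}dt .
\]
On the right-hand side, Assumption~\ref{Ass : force} gives $-\int\!\int\langle\mathcal{F},v\rangle$ plus the punctual terms in $F_0,F_1$. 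By Assumptions~\ref{Ass : constraint} and~\ref{ass:injecting_lambda:1}, the multiplier term is
\[
\int\!\!\int \langle\lambda,\mathcal{C}^1v+\mathcal{C}^2\partial_xv\rangle
=\int\!\!\int\langle \mathcal{C}^{1*}\lambda-\partial_x(\mathcal{C}^{2*}\lambda),v\rangle+\int_0^T\big[\langle \mathcal{C}^{2*}\lambda,v\rangle\big]_0^1dt .
\]

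\textbf{Step 3: localization.} First I take $v$ compactly supported in $(0,T)\times(0,1)$. All bracket terms vanish, and since the collected integrand is continuous, the fundamental lemma gives \eqref{equation Cosserat dyn 1} pointwise. Then, for arbitrary $v$, the interior integral vanishes. Choosing $v(t,x)=\phi(t)\psi(x)w$ with $\psi(1)=1$, $\psi(0)=0$ and $\phi$ arbitrary in $C^1_0$ isolates the boundary identity at $x=1$; the analogous choice gives $x=0$. This yields \eqref{equation Cosserat boun 1}, with the orientation of the signs at $x=0$ versus $x=1$ coming from the bracket. The constraint \eqref{Constraint Cosserat Poincaré} is just the second line of \eqref{eq:Hamilton equation} written with the density \eqref{eq:density-constraint}.

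\textbf{Main obstacle: regularity.} The main difficulty is justifying that the integrations by parts are legitimate, i.e.\ that $t\mapsto\mathcal{L}_5(\cdot)$ and $x\mapsto \mathcal{L}_4(\cdot)$ and $\mathcal{C}^{2*}\lambda$ are differentiable. We only have $\mathcal{L}\in\mathscr{C}^1$, and $\eta_g\in C^{1,2}$, $\xi_g\in C^{2,1}$, so $\mathcal{L}_5(t,x,g,\xi_g,\eta_g)$ is a priori only continuous. I would handle this in the standard du Bois-Reymond way. First use test functions depending on only one variable to show that the distributional derivatives $\frac{d}{dt}\mathcal{L}_5+\frac{d}{dx}\mathcal{L}_4$ equal a continuous function. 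The equation is then interpreted in that sense, i.e.\ the combination $\frac{d}{dt}\mathcal{L}_5+\frac{d}{dx}\mathcal{L}_4$ is understood as a single term. Alternatively, one assumes the extra smoothness implicit in the statement. The boundary trace of $\mathcal{L}_4$ is well defined since it is continuous up to $x=0,1$. The multiplier term is fine, because $\lambda\in\mathscr{C}^{1,1}$ and $\mathcal{C}^2\in\mathscr{C}^1$ with $\xi_g\in C^1$ in $x$.
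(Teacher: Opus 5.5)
Your proposal follows the paper's proof essentially step for step: the variation $g\exp(hv)$ (the paper restricts to $v\in C^{2,2}_{\mathfrak{g}}$ vanishing at $t=0,T$), the twist-variation formulas, differentiation under the integral, integration by parts in $t$ and $x$ including the $\mathcal{C}^2$ term, and identification of the interior and boundary terms. Like the paper's own proof, it produces $\mathcal{L}_4$ rather than $\mathcal{L}_3$ in the boundary conditions, so the $\mathcal{L}_3$ there is a typo in the statement.

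The regularity gap you flag is genuine. The paper integrates by parts as if $(\mathcal{L}_g)_4$ were $x$-differentiable and $(\mathcal{L}_g)_5$ were $t$-differentiable, i.e.\ it tacitly assumes the extra smoothness of your fallback. Your du Bois-Reymond alternative, however, only yields the combination $\partial_t\mathcal{L}_5+\partial_x\mathcal{L}_4$ in the distributional sense. One-variable test functions do not separate the two derivatives in two dimensions, and functions of $x$ alone are not admissible since they must vanish at $t=0,T$. The boundary identities would then need a Gauss--Green argument for a continuous field with continuous divergence.
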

\begin{proof}The proof of the theorem is deferred to Appendix~\ref{app:least action thm}.
\end{proof}
\begin{remark}
The boundary conditions \eqref{equation Cosserat boun 1} depend on the choice of the function space for the variation $v$.
For instance, if instead of the space $C^1_0([0,T];C^1_\mathfrak{g})$
we had used e.g. $C_0^1([0,T];C^1_0([0,1];\mathfrak{g}))$
then the boundary conditions \eqref{equation Cosserat boun 1} are not required.
Conversely, considering a function space larger than $C^1([0,T];C^1_0([0,1];\mathfrak{g}))$ imposes additional equations into the Cosserat-Poincaré equation. For instance, considering the  function space $C^{1,1}_\mathfrak{g}$ adds to \eqref{equation Cosserat boun 1} the new equations
    \begin{equation}
        \mathcal{L}_4(0,x,g(0,x),\xi_{g}(0,x),\eta_{g}(0,x))=0, \quad \mathcal{L}_4(T,x,g(T,x),\xi_{g}(T,x),\eta_{g}(T,x))=0 \quad \forall x\in[0,1].
    \end{equation}
 \end{remark}
 \begin{remark}
If the constraint and the force contain punctual forces and constraints at interior points $x_1,x_2...,x_N\in\,(0,1)$, the equations \eqref{equation Cosserat dyn 1}--\eqref{equation Cosserat boun 1} will have to be written on the intervals $(x_i,x_{i+1})$ taking $x_0=0$ and $x_{N+1}=1$.
In this case, the punctual forces and constraints together with the continuity of $g$ at each points $x_i$ will yield the boundary conditions for the partial differential equation (PDE) on the interval $(x_i,x_{i+1})$.
\end{remark}

\begin{remark}[Constraint free equation]
The case $l=0$ corresponds the case where there are no constraints.
The Cosserat-Poincaré equation thus simplifies to the \emph{unconstrained Cosserat-Poincaré equation}
which is \eqref{equation Cosserat dyn 1} with $\mc{C}^1=\mc{C}^2=\lambda=0$
along with the boundary conditions \eqref{equation Cosserat boun 1}.
\end{remark}

\subsection{Left-invariant Lagrangian}
\label{left-invariant Lagrangian section}

In order to provide an explicit form to the Lagrangian density (see Assumption~\ref{Ass : lagrangian}), we introduce the notions of kinetic and potential energy densities. Given
$I : [0,1] \mapsto \mathscr{L}(\mathfrak{g},\mathfrak{g})$ where $I(x)$ is symmetric positive definite for all $x\in[0,1]$,
the \emph{$I$-kinetic energy density} is defined as
\begin{equation}\label{def:kinematic energy}
    \mathcal{T}_I : [0,1]\times \mathfrak{g} \rightarrow \mathbb{R}, \quad (x,\eta) \mapsto \frac{1}{2}\langle I(x)\eta,\eta\rangle.
\end{equation}
Similarly, given $H : [0,1] \mapsto \mathscr{L}(\mathfrak{g},\mathfrak{g})$ where $H(x)$ is symmetric positive definite for all $x\in[0,1]$, and referred to as the Hookean reduced stiffness matrix, and $\xi_0$, the reference space-twis  defined in \eqref{eq:def:ST0:a}, the \emph{$(H,\xi_0)$-potential energy density $\mathcal{U}_{H,\xi_0}(\xi)$} is defined as
\begin{equation}\label{def:potential energy}
    \mathcal{U}_{H,\xi_0} : [0,1]\times \mathfrak{g} \rightarrow \mathbb{R}, \quad (x,\xi)\mapsto \frac{1}{2}\langle  H(x)(\xi-\xi_0(x)),\xi-\xi_0(x)\rangle.
\end{equation}
For later use, we call \emph{stress} the function
\begin{equation}\label{def:stress}
    \Lambda: = H(\xi - \xi_0).
\end{equation}

\begin{remark}
\label{remark : Loi constitutive fred}
The above constitutive law is the most commonly used in practical applications. It assumes that the material is elastic and subject to small space twists. However, it can be said hyper-elastic since it is compatible with finite transformations of $SE(k)$ and especially those induced by the large deformations of the rod. More generally, in (\ref{def:potential energy}) one can replace the internal quadratic energy density by more general nonlinear functions of space twists $\mathcal{U}_{H,\xi_0}(\xi)$, and define the stress field along the rod, as the strain energy conjugate:
\begin{equation}\label{def:stress conjugate}
    \Lambda: = \frac{\partial \mathcal{U}_{H,\xi_0}}{\partial \xi}(\xi).
\end{equation}
\\
Although this does not call into question all the subsequent developments, the choice of such a potential is not an easy task and requires reconsidering the rod as a three-dimensional medium. Much more often, the above Hookean law is enriched in a heuristic way by adding to the restoring elastic terms, some Kelvin-Voigt dissipation terms \cite{Damping2013}. In this case, (\ref{def:stress}) is changed into:
\begin{equation}\label{def:stress + dissipation}
    \Lambda: = H(\xi - \xi_0)+D\dot{\xi}.
\end{equation}
where $D$ is a dissipation constant matrix often considered in the form $D=\mu H$, with $\mu$ an empirical damping factor.
\end{remark}

Last, let us fix the distributed Lagrangian
\begin{equation}\label{def: Cas particulier Lagrangien }
    L(t,h,\eta):=\int_0^1 \mathcal{L}(t,x,h(x),\xi_h(x),\eta(x)) dx, \quad \forall (t,h,\eta)\in[0,1]\times C^2_G \times C^2_\mathfrak{g},
\end{equation}
where \begin{equation}\label{def: Cas particulier Lagrangien density}
     \mathcal{L}(t,x,h,\xi,\eta):=\mathcal{T}_I(\eta)-\mathcal{U}_{H,\xi_0}(\xi), \quad \forall(t,h,\xi,\eta)\in [0,T]\times G \times \mathfrak{g}^2.
\end{equation}

Note that the Lagrangian \eqref{def: Cas particulier Lagrangien density} is time independent and left-invariant\footnote{The left-invariance property is a consequence of the symmetries of the ambient space (homogeneity and isotropy). In more details, it results of the fact that once expressed in the cross-sectional frames, neither kinetic energy, nor potential energy depend on the position and orientation of an external observer attached to the ambient space (the spatial frame). In solid mechanics, this symmetry property of space is often referred to as "material objectivity" and roughly reflects the fact that matter is indifferent to space.}. Let us now apply Theorem \ref{least action thm} with the Lagrangian \eqref{def: Cas particulier Lagrangien  }.

\begin{corollary}
\label{Equation left-invariant}
    Consider a force $F$ satisfying Assumption \ref{Ass : force}, a constraint $(A,a)$ satisfying Assumption \ref{Ass : constraint} and the Lagrangian as in~\eqref{def: Cas particulier Lagrangien }. Denote by $\mathcal{F}$ and $(F_0,F_1)$ respectively the distributed and punctual part of the force and recall the functions $\widetilde{a},\mathcal{C}^1$ and $\mathcal{C}^2$ induced by the constraint $(A,a)$ defined in \eqref{def : a,C1,C2}.
If a movement $g\in C^{2,2}_G$ and the Lagrange multipliers $\lambda\in C^{1,1}_{\mathbb{R}^l} $ satisfy the equation \eqref{eq:Hamilton equation} with the initial conditions $g^0\in C^2_G$ and $(g^0)^{-1}g^1\in C^2_{\mathfrak{g}}$ for all $x\in[0,1]$, then $(g,\lambda)$ satisfies the following partial differential system for all $(t,x)\in [0,T]\times[0,1]$
\begin{subequations}\label{Cosserat Lagrangian 0.5}
    \begin{empheq}[left=\empheqlbrace]{align}
    &\partial_t(I\eta)-\partial_x\Lambda-\operatorname{ad}_{\eta}^*(I\eta)+\operatorname{ad}_{\xi}^*\Lambda+\mathcal{C}^1(t,g,\xi)^*\lambda - \partial_x(\mathcal{C}^2(t,g,\xi)^*\lambda)=\mathcal{F}(t,x,g,\xi,\partial_x\xi,\eta),\label{Cosserat Lagrangian 0.5 1}\\
    &\widetilde{a}(t,g,\xi)+\mathcal{C}^1(t,g,\xi)\eta + \mathcal{C}^2(t,g,\xi)\partial_x\eta = 0,\label{Cosserat Lagrangian 0.5 2}\\
    &\partial_x g(t,x)=g(t,x)\xi(t,x), \quad \partial_t g(t,x)=g(t,x)\eta(t,x),\label{Cosserat Lagrangian 0.5 3}\\
    &\Lambda(t,0)=-F_0(t,g(t,0),\xi(t,0),\eta(t,0))-\mathcal{C}^2(t,g(t,0),\xi(t,0))^*\lambda(t,0),\label{Cosserat Lagrangian 0.5 4}\\
    &\Lambda(t,1)= F_1(t,g(t,1),\xi(t,1),\eta(t,1))-\mathcal{C}^2(t,g(t,1),\xi(t,1))^*\lambda(t,1),\label{Cosserat Lagrangian 0.5 5}\\
    & g(0,x)=g^0(x), \quad \partial_tg(0,x)=g^1(x) ,\label{Cosserat Lagrangian 0.5 6}
    \end{empheq}
    \end{subequations}
    where the stress $\Lambda$ is defined in \eqref{def:stress}.
\end{corollary}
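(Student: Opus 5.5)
The corollary is Theorem~\ref{least action thm} specialised to the density $\mathcal{L}(t,x,h,\xi,\eta)=\mathcal{T}_I(x,\eta)-\mathcal{U}_{H,\xi_0}(x,\xi)$ of \eqref{def: Cas particulier Lagrangien density}. First I check that this density satisfies Assumption~\ref{Ass : lagrangian}. It is polynomial in $(\xi,\eta)$ and independent of $(t,h)$, so it is $\mathscr{C}^1$ provided $I$, $H$ and $\xi_0$ are continuous in $x$, which I take as part of the setting. Since $F$ and $(A,a)$ satisfy Assumptions~\ref{Ass : force} and~\ref{Ass : constraint}, the theorem applies. It yields \eqref{equation Cosserat dyn 1}, the constraint \eqref{Constraint Cosserat Poincaré} and the boundary conditions \eqref{equation Cosserat boun 1}.

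Next I compute the partial differentials, identifying $\mathfrak{g}^*$ with $\mathfrak{g}$ through the inner product. Since $\mathcal{L}$ does not depend on $h$, the term $g^*\mathcal{L}_3$ vanishes. Since $I(x)$ is symmetric, the derivative in $\eta$ is $\mathcal{L}_5=I\eta$. Since $H(x)$ is symmetric, the derivative in $\xi$ is $\mathcal{L}_4=-H(\xi-\xi_0)=-\Lambda$ by \eqref{def:stress}. I then substitute these along the motion, with $\xi=\xi_g$ and $\eta=\eta_g$. The term $\frac{d}{dt}\mathcal{L}_5$ becomes $\partial_t(I\eta)$, because $I$ depends only on $x$. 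The term $\frac{d}{dx}\mathcal{L}_4$ becomes $-\partial_x\Lambda$, which is where the $x$-dependence of $H$ and $\xi_0$ enters. The two coadjoint terms become $-\operatorname{ad}^*_\eta(I\eta)+\operatorname{ad}^*_\xi\Lambda$. Moving $\mathcal{F}$ to the right-hand side gives \eqref{Cosserat Lagrangian 0.5 1}. Equation \eqref{Cosserat Lagrangian 0.5 2} is \eqref{Constraint Cosserat Poincaré} verbatim. The relations \eqref{Cosserat Lagrangian 0.5 3} are Definition~\ref{def:twists:1} with $\xi:=\xi_g$ and $\eta:=\eta_g$. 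The initial data \eqref{Cosserat Lagrangian 0.5 6} simply record the hypotheses on $g^0$ and $g^1$.

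The step needing care is the boundary conditions. In \eqref{equation Cosserat boun 1} the subscript in ``$\mathcal{L}_3$'' must be read as the derivative with respect to the $\xi$ slot, which is what arises from integrating $\langle\mathcal{L}_4,\partial_x v\rangle$ by parts in the appendix proof. I would re-derive the signs from that integration by parts rather than trust the displayed indices. The boundary term is $[\langle\mathcal{L}_4-\mathcal{C}^{2*}\lambda,v\rangle]_0^1$, coming from the $\partial_x\eta$ dependence of both the action variation and the constraint. It is matched against $-\langle F_1,v(1)\rangle-\langle F_0,v(0)\rangle$. Substituting $\mathcal{L}_4=-\Lambda$ then gives $\Lambda(t,1)=F_1-\mathcal{C}^{2*}\lambda(t,1)$ and $\Lambda(t,0)=-F_0-\mathcal{C}^{2*}\lambda(t,0)$, which are \eqref{Cosserat Lagrangian 0.5 4}--\eqref{Cosserat Lagrangian 0.5 5}. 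I also check that the arguments of $F_0$ and $F_1$ are listed in the same order as in Assumption~\ref{Ass : force}, and read $\xi_g(t,L)$ in the theorem as $\xi_g(t,1)$.

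Finally, the theorem states the equations on the open set $(0,T)\times(0,1)$. All terms are continuous, since $g\in C^{2,2}_G$ and $\lambda\in C^{1,1}_{\mathbb{R}^l}$ by Assumption~\ref{ass:injecting_lambda:1}. The equations therefore extend by continuity to the closed set $[0,T]\times[0,1]$, as the corollary asserts.
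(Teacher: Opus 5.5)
Your proposal is correct and follows the paper's proof: the paper also computes $\mathcal{L}_3=0$, $\mathcal{L}_4=-\Lambda$, $\mathcal{L}_5=I\eta$ and substitutes them into Theorem~\ref{least action thm}, and you are right to read that theorem's boundary conditions with $\mathcal{L}_4$ in place of the misprinted $\mathcal{L}_3$ (as its appendix proof does), which is what gives the signs in \eqref{Cosserat Lagrangian 0.5 4}--\eqref{Cosserat Lagrangian 0.5 5}. One small correction: Assumption~\ref{Ass : lagrangian}, and the term $\partial_x\Lambda$ itself, need $H$ to be $\mathscr{C}^1$ in $x$ (and $\xi_0$ too, which is already in $C^1_{\mathfrak{g}}$); continuity alone is not enough.
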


\begin{proof}
    Considering the expression of the Lagrangian \eqref{def: Cas particulier Lagrangien density}, we have
\begin{align}
    \mathcal{L}_3(t,g(t,x),\xi_g(t,x),\eta_g(t,x))&=0,\\
    \mathcal{L}_4(t,g(t,x),\xi_g(t,x),\eta_g(t,x))&=-H(x)(\xi_g(t,x)-\xi_0(x))=-\Lambda(t,x),\\
    \mathcal{L}_5(t,g(t,x),\xi_g(t,x),\eta_g(t,x))&=I(x)\eta_g(t,x).
\end{align}
    By applying Theorem \ref{least action thm}, we obtain the desired set of equations \eqref{Cosserat Lagrangian 0.5}.
\end{proof}

We say that the Cosserat Poincaré equation is \textit{left-invariant} if the Lagrangian density, the force and the constraint are all left-invariant functions. In that case,
\eqref{Cosserat Lagrangian 0.5} can be simplified by taking the time and space  twists $\eta_g, \xi_g$ together with the Lagrange multiplier $\lambda$ as the unknowns of a reduced Cosserat-Poincaré equation as given next. In order to simplify the notation, we make no distinction between the forces $\mathcal{F},F_0,F_1$ (resp. constraints $\mathcal{C}^1,\mathcal{C}^2,\widetilde{a}$) and the reduced forces (resp. reduced constraints).
\begin{corollary}\label{cor:CPE-left-invariant}
With the hypotheses of Corollary \ref{Equation left-invariant}, also assume the force $F$ and the distributed constraint $(A,a)$ are left invariant. If the initial condition is given by
$\eta(0,\cdot) =\eta^0(\cdot) \in C^1_\mathfrak{g}$ and $\xi(0,\cdot) = \xi^0(\cdot) \in  C^1_\mathfrak{g}$
then $(\eta_g,\xi_g,\lambda)$ satisfies the following partial differential system for all $(t,x)\in[0,T]\times[0,1]$
\begin{subequations}
\label{Cosserat Lagrangian 1}
\begin{empheq}[left=\empheqlbrace]{align}
&\label{eq:Cosserat Lagrangian 1 dyn}\partial_t(I\eta) - \partial_x\Lambda - \operatorname{ad}_{\eta}^*(I\eta) + \operatorname{ad}_{\xi}^*\Lambda+\mathcal{C}^1(t,\xi)^*\lambda - \partial_x(\mathcal{C}^2(t,\xi)^*\lambda) = \mathcal{F}(t,x, \xi,\partial_x\xi, \eta), \\
&\label{eq:Cosserat Lagrangian 1 cons}\widetilde{a}(t,\xi)+\mathcal{C}^1(t,\xi)\eta + \mathcal{C}^2(t,\xi)\partial_x\eta =0, \\
&\label{eq:Cosserat Lagrangian 1 struc}\partial_x \eta - \partial_t \xi = [\eta, \xi], \\
&\label{eq:Cosserat Lagrangian 1 bound 1}\Lambda(t, 0) = -F_0(t, \xi(t, 0), \eta(t, 0))-\mathcal{C}^2(t,\xi(t,0))^*\lambda(t,0), \\
&\label{eq:Cosserat Lagrangian 1 bound 2}\Lambda(t, 1) = F_1(t, \xi(t, 1), \eta(t, 1))-\mathcal{C}^2(t,\xi(t,1))^*\lambda(t,1), \\
&\label{eq:Cosserat Lagrangian 1 ini cond}(\eta(0, x), \xi(0, x)) = (g^0(x)^{-1}g^1(x), g^0( x)^{-1}\partial_x g^0(x)),
\end{empheq}
\end{subequations}
where $\Lambda := H(\xi - \xi_0)$ is defined in equation \eqref{def:stress}. Conversely, a solution $(\eta,\xi,\lambda)$ of \eqref{Cosserat Lagrangian 1} defines, up to a multiplicative constant in $G$, a solution $g\in C^{2,2}_G$ of equation \eqref{Cosserat Lagrangian 0.5}.
\end{corollary}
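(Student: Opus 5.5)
The plan splits into two directions: first show that the twists of a solution of the weak form satisfy the reduced system, then reconstruct a motion from a solution of the reduced system.

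\textbf{Forward direction.} We start from Corollary~\ref{Equation left-invariant}, which states that $(g,\lambda)$ satisfies \eqref{Cosserat Lagrangian 0.5}. Set $\eta:=\eta_g$ and $\xi:=\xi_g$. Left-invariance of $F$ and of $(A,a)$ means that the densities $\mathcal{F},F_0,F_1,\mathcal{C}^1,\mathcal{C}^2,\widetilde{a}$ depend on $g$ only through $\xi_g$. In the reduced notation, \eqref{Cosserat Lagrangian 0.5 1}, \eqref{Cosserat Lagrangian 0.5 2}, \eqref{Cosserat Lagrangian 0.5 4} and \eqref{Cosserat Lagrangian 0.5 5} therefore become \eqref{eq:Cosserat Lagrangian 1 dyn}, \eqref{eq:Cosserat Lagrangian 1 cons}, \eqref{eq:Cosserat Lagrangian 1 bound 1} and \eqref{eq:Cosserat Lagrangian 1 bound 2} verbatim. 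The stress $\Lambda=H(\xi-\xi_0)$ is already expressed through $\xi$. The structure equation \eqref{eq:Cosserat Lagrangian 1 struc} is Proposition~\ref{prop:structure}, applied to $g\in C^{2,2}_G$. The initial data \eqref{eq:Cosserat Lagrangian 1 ini cond} follow from \eqref{Cosserat Lagrangian 0.5 6}: evaluate $\eta_g=g^{-1}\partial_tg$ and $\xi_g=g^{-1}\partial_xg$ at $t=0$.

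\textbf{Converse direction.} Let $(\eta,\xi,\lambda)$ solve \eqref{Cosserat Lagrangian 1}, with the regularity $\eta\in C^{1,2}_\mathfrak{g}$ and $\xi\in C^{2,1}_\mathfrak{g}$.
\begin{enumerate}
\item Solve the linear matrix ODE $\partial_x g(0,x)=g(0,x)\xi(0,x)$ on $[0,1]$ with initial value $g(0,0)=g_*\in G$. This gives $g(0,\cdot)$, which stays in $G$ because $\xi$ takes values in $\mathfrak{g}$.
\item For each fixed $x$, solve $\partial_t g(t,x)=g(t,x)\eta(t,x)$ in $t$, starting from $g(0,x)$.
\item Show that $\partial_xg=g\xi$ holds for all times. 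Let $\Delta:=\partial_xg-g\xi$. Using the structure equation \eqref{eq:Cosserat Lagrangian 1 struc} together with $\partial_tg=g\eta$, one gets
\[
\partial_t\Delta=\partial_x(g\eta)-g\eta\xi-g\partial_t\xi=\Delta\eta+g\bigl(\partial_x\eta-\partial_t\xi-[\eta,\xi]\bigr)=\Delta\eta .
\]
Since $\Delta(0,\cdot)=0$, uniqueness for this linear ODE in $t$ gives $\Delta\equiv0$.
\item Check the regularity of $g$. Both $\partial_tg$ and $\partial_xg$ are products of $g$ with the twists, so the regularity of $\eta$ and $\xi$ bootstraps to give the mixed derivatives needed for $g\in D^{2,2}_G\simeq C^{2,2}_G$.
\item Since the data in \eqref{Cosserat Lagrangian 1} depend on $g$ only through $(\xi,\eta)$, the pair $(g,\lambda)$ satisfies \eqref{Cosserat Lagrangian 0.5}, with initial data $g^0:=g(0,\cdot)$ and $g^1:=g^0\eta(0,\cdot)$.
\end{enumerate}

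\textbf{Uniqueness of the reconstruction.} Let $g,\tilde g$ be two reconstructions with the same twists. Then $g\tilde g^{-1}$ has vanishing $t$- and $x$-derivatives, so it is a constant $c\in G$ and $g=c\tilde g$. This is the ``up to a multiplicative constant'' in the statement: the constant is the arbitrary choice of $g_*$ in step~1.

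\textbf{Main obstacle.} The integrability argument itself (Remark~\ref{re:reconstruction:1}) is short. The delicate point is the regularity bookkeeping in step~4, specifically verifying that the mixed partial derivatives required by Remark~\ref{rem:imbrication-Versus-2Var} exist and are continuous under the stated regularity of the twists. For that, the first thing to try is to differentiate the identities $\partial_tg=g\eta$ and $\partial_xg=g\xi$, and to check continuity of each derivative term by term.
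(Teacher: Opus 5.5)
Your proposal is correct and follows the paper's proof. The forward direction is direct substitution of the left-invariant data into \eqref{Cosserat Lagrangian 0.5}, together with Proposition~\ref{prop:structure} for the structure equation. The converse is the reconstruction of Remark~\ref{re:reconstruction:1} from a prescribed base configuration. The only difference is that the paper simply cites that remark, whereas you carry the reconstruction out explicitly: the two-step ODE integration, the compatibility identity $\partial_t\Delta=\Delta\eta$, and uniqueness up to left multiplication by a constant in $G$.
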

\begin{proof}
Let $(g,\lambda)$ be the solution of \eqref{Cosserat Lagrangian 0.5}, then
the functions $\eta_g,\xi_g,\lambda$ satisfy, by definition, the equations \eqref{eq:Cosserat Lagrangian 1 dyn}-\eqref{eq:Cosserat Lagrangian 1 cons}-\eqref{eq:Cosserat Lagrangian 1 bound 1}-\eqref{eq:Cosserat Lagrangian 1 bound 2}-\eqref{eq:Cosserat Lagrangian 1 ini cond}. The remaining equation \eqref{eq:Cosserat Lagrangian 1 struc} to be satisfied is the structure equation \eqref{eq:structure} that is automatically verified for twists by Proposition \ref{prop:structure}.

Conversely, the movement $g$ arising from a solution $(g,\lambda)$ of equation~\eqref{Cosserat Lagrangian 0.5} can be reconstructed from the solution $(\eta,\xi,\lambda)$ of equation~\eqref{Cosserat Lagrangian 1} using Remark~\ref{re:reconstruction:1} together with a predetermined configuration $g^*\in G$ at $(0,x_0)$ with some $x_0\in [0,1]$.
\end{proof}

\subsection{Energy of solutions of Cosserat-Poincar\'e equations}
\label{Section Energy estimate}

One can associate with the solutions of Cosserat-Poincar\'e equations a function of the time $t$ called energy, as it is the case for solutions of hyperbolic partial differential equations \cite{bastin2016stability}. In this section, we define this energy function and compute its time derivative along solutions of Cosserat–Poincaré equations.

The \emph{(mechanical) energy $E$} of a movement $g\in C^{2,2}_G$ is defined by
     \begin{equation} \label{mechanical energy def}
     E(t)=\int_0^1 \mathcal{T}_I(\eta_g(t,x)) + \mathcal{U}_{H,\xi_0}(\xi_g(t,x))dx,\quad t\in [0,T].
     \end{equation}


In the next result, we determine the time derivative of $E$ under the assumptions of Corollary \ref{Equation left-invariant}.

\begin{proposition} \label{energy} Let $g\in C^{2,2}_G$ be a solution to the Cosserat-Poincaré equation \eqref{Cosserat Lagrangian 0.5}. Then the time derivative of mechanical energy $E$ of $g$ on $[0,T]$ is equal to
\begin{equation}\label{eq:der-E}\begin{aligned}
\frac{d}{dt} E(t) &= \int_0^1 \langle \mathcal{F}(t,x,g,\xi_g,\partial_x\xi_g,\eta_g), \eta_g \rangle +\langle \lambda , \widetilde{a}(t,g,\xi_g) \rangle,dx\\
&+\langle F_0(t,g(t,0),\xi_g(t,0),\eta_g(t,0)),\eta_g(t,0)\rangle+ \langle F_1(t,g(t,L),\xi_g(t,L),\eta_g(t,1)),\eta_g(t,1)\rangle\rangle.\end{aligned}\end{equation}
\end{proposition}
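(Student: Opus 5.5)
The plan is to differentiate $E$ under the integral sign, justified by $g\in C^{2,2}_G$, which gives $\eta_g\in C^{1,2}_\mathfrak{g}$ and $\xi_g\in C^{2,1}_\mathfrak{g}$. Since $I(x)$ and $H(x)$ are symmetric and time independent,
\begin{equation*}
\frac{d}{dt}E(t)=\int_0^1 \langle \partial_t(I\eta),\eta\rangle + \langle \Lambda,\partial_t\xi\rangle\,dx ,
\end{equation*}
with $\Lambda=H(\xi-\xi_0)$ and $\eta=\eta_g$, $\xi=\xi_g$. I will then substitute the two structural identities into this expression. For the first term I use \eqref{Cosserat Lagrangian 0.5 1} to replace $\partial_t(I\eta)$. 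For the second term I use the structure equation \eqref{eq:structure}, written as $\partial_t\xi=\partial_x\eta-[\eta,\xi]$.

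The algebraic core is cancellation of the gyroscopic terms via the adjoint convention $\ad^*_\xi:=(\ad_\xi)^*$. First, $\langle \ad^*_\eta(I\eta),\eta\rangle=\langle I\eta,[\eta,\eta]\rangle=0$. Second, the term $\langle \ad^*_\xi\Lambda,\eta\rangle=\langle\Lambda,[\xi,\eta]\rangle$ coming from the dynamics cancels against the term $-\langle\Lambda,[\eta,\xi]\rangle=\langle\Lambda,[\xi,\eta]\rangle$ coming from the structure equation. I need to check these signs carefully. What remains of the elastic part is
\begin{equation*}
\int_0^1 \langle\partial_x\Lambda,\eta\rangle+\langle\Lambda,\partial_x\eta\rangle\,dx=\langle\Lambda,\eta\rangle\big|_0^1 .
\end{equation*}

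Next come the constraint terms. Integrating by parts,
\begin{equation*}
\int_0^1\langle -\mathcal{C}^{1*}\lambda+\partial_x(\mathcal{C}^{2*}\lambda),\eta\rangle\,dx=-\int_0^1\langle\lambda,\mathcal{C}^1\eta+\mathcal{C}^2\partial_x\eta\rangle\,dx+\langle\mathcal{C}^{2*}\lambda,\eta\rangle\big|_0^1 .
\end{equation*}
By the constraint \eqref{Cosserat Lagrangian 0.5 2}, the integrand equals $\langle\lambda,\widetilde a\rangle$. This produces the $\langle\lambda,\widetilde a\rangle$ term of \eqref{eq:der-E}, and the distributed force contributes $\int_0^1\langle\mathcal{F},\eta\rangle$.

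The boundary terms combine into $\langle\Lambda+\mathcal{C}^{2*}\lambda,\eta\rangle\big|_0^1$. By \eqref{Cosserat Lagrangian 0.5 4}--\eqref{Cosserat Lagrangian 0.5 5} this equals $\langle F_1,\eta(t,1)\rangle+\langle F_0,\eta(t,0)\rangle$, as claimed.

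The main obstacle is bookkeeping of signs in the boundary and constraint terms, which must match the sign conventions in \eqref{Cosserat Lagrangian 0.5 4}--\eqref{Cosserat Lagrangian 0.5 5}. I would recheck these against Theorem~\ref{least action thm}, where the paper's signs look inconsistent (compare $\pm F_0$ and $\pm\mathcal{C}^{2*}\lambda$), and adopt the convention that makes the cancellation exact. A secondary point is regularity. The product $\mathcal{C}^{2*}\lambda$ must be $C^1$ in $x$ for the integration by parts; this follows from Assumption~\ref{ass:injecting_lambda:1} together with $\mathcal{C}^2\in\mathscr{C}^1$ and $\xi\in C^{2,1}$.
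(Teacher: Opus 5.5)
Your proof is correct and is essentially the paper's argument: differentiate under the integral, use the structure equation and \eqref{Cosserat Lagrangian 0.5 1} so that the $\operatorname{ad}^*$ terms cancel, integrate by parts, and conclude with the constraint \eqref{Cosserat Lagrangian 0.5 2} and the boundary conditions. The only difference is that the paper eliminates $\partial_x\Lambda$ where you eliminate $\partial_t(I\eta)$, which is the same identity rearranged. No choice of sign convention is needed at the end: with \eqref{Cosserat Lagrangian 0.5 4}--\eqref{Cosserat Lagrangian 0.5 5} as stated, $\langle\Lambda+\mathcal{C}^{2*}\lambda,\eta\rangle\big|_0^1$ equals exactly $\langle F_1,\eta(t,1)\rangle+\langle F_0,\eta(t,0)\rangle$, and these conditions agree with Theorem~\ref{least action thm} once $\mathcal{L}_3$ there is read as $\mathcal{L}_4=-\Lambda$ (the $\pm F_i$ asymmetry arises because $F_0,F_1$ enter as point evaluations, while $\mathcal{C}^{2*}\lambda$ comes from an integration by parts).
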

\begin{proof}In this paragraph, we write $\eta$ and $\xi$ instead of $\eta_g$ and $\xi_g$. Due to the $C^1$-regularity of  $\eta$ and $\xi$, we can interchange integration over $[0,1]$ and time derivative. One gets that
\[
\dif{t} E(t)=\int_0^1\dif{t}\mathcal{T}_I(\eta)\,dx + \int_0^1\dif{t}\mathcal{U}_{H,\xi_0}(\xi)\,dx.
\]
Using the structure equation \eqref{eq:structure}, it holds
   \begin{equation}\begin{aligned}
        \dif{t}\mathcal{U}_{H,\xi_0}(\xi)&=\langle \Lambda,\partial_t\xi \rangle=\langle\Lambda,\partial_x\eta-[\eta,\xi]\rangle.
        \end{aligned}
    \end{equation}
Then using \eqref{Cosserat Lagrangian 0.5}, one gets
\begin{equation}\begin{aligned}\label{cumputation energy}
        \int_0^1\dif{t}\mathcal{U}_{H,\xi_0}(\xi)\,dx&=\int_0^1 \langle \Lambda, \partial_x\eta-[\eta,\xi]\rangle\,
        dx
        =\left[\langle \Lambda,\eta \rangle \right]_0^1 - \int_0^1 \langle \Lambda, [\eta,\xi]\rangle  + \langle \partial_x\Lambda,\eta \rangle\,dx\\
        =& \left[\langle \Lambda, \eta \rangle \right]_0^1 - \int_0^1 \big(\langle  \Lambda,[\eta,\xi]\rangle + \langle I\partial_t\eta - ad^*_\eta I\eta + ad^*_\xi\Lambda - \mathcal{F}(t,g,\xi,\eta),\eta  \rangle\big) \,dx \\
        &-\int_0^1 \langle (\mathcal{C}^1)^*\lambda - \partial_x(\mathcal{C}^2)^*\lambda), \eta\rangle \,dx \\
        =&\left[\langle \Lambda,\eta \rangle \right]_0^1  - \int_0^1 \big(\langle I\partial_t\eta, \eta\rangle - \langle  I\eta,[\eta,\eta ] \rangle - \langle \mathcal{F}(t,g,\eta,\xi),\eta\rangle\big) \,dx\\
        &-\int_0^1 \big(\langle \lambda, \mathcal{C}^1\eta \rangle + \langle \lambda, \mathcal{C}^2\partial_x\eta \rangle\big)\,dx + \left[\langle (\mathcal{C}^2)^*(\lambda),\eta \rangle\right]_0^1  \\
        =&\langle F_0(t,g(t,0),\xi(t,0),\eta(t,0)),\eta(t,0)\rangle + \langle F_1(t,g(t,1),\xi(t,1),\eta(t,1)),\eta(t,1)\rangle  \\
        &+ \int_0^1 \langle \mathcal{F}(t,g,\eta,\xi),\eta\rangle \,dx - \int_0^1  \langle I\partial_t\eta, \eta \rangle \,dx+\int_0^1 \langle \lambda, \widetilde{a}(t,g,\xi) \rangle \,dx
    \end{aligned}\end{equation}
    which after adding the term
$\int_0^1\dif{t}\mathcal{T}_I(\eta)\,dx= \int_0^1 \langle I\partial_t\eta,\eta \rangle\,dx$
    yields \eqref{eq:der-E}.
\end{proof}

\begin{remark}
In the case of holonomic constraints, the term $\widetilde{a}(t,g,\xi)$ represents the time variation of the constraint. From a physical point of view, it is therefore expected that
both $\mathcal{F}$ and $\widetilde{a}$ appear in the formula for the time derivative of $E$, since they contribute as external energies.
\end{remark}

Proposition~\ref{energy} highlights a particular class of forces $F$, called dissipative forces,
that renders the mechanical energy into a
non-increasing function of the time.

\begin{definition}
We say that a distributed force $\mathcal{F}$ (a punctual part $(F_0,F_1)$ respectively) is {\it dissipative} if, for all $g\in C^{2,2}_G$ and
$(t,x)\in [0,T]\times[0,1]$, one has
\begin{equation}
        \label{dissipatif 1}\langle \mathcal{F}(t,x,g,\xi_g(t,x),\eta_g(t,x)),\eta_g(t,x)\rangle \leq 0
    \end{equation}
\begin{equation}
        \label{dissipatif 2}(\langle F_0(t,g,\xi_g(t,x),\eta_g(t,x)),\eta_g(t,x)\rangle \leq 0, \quad \langle F_1(t,g,\xi_g(t,x),\eta_g(t,x)),\eta_g(t,x)\rangle \leq 0 \textrm{ respectively}).
    \end{equation}
\end{definition}

A force $F$ is dissipative if $\mathcal{F}$ and $(F_0,F_1)$ are.
The next corollary is immediate.

\begin{corollary}\label{Cor : energy} Let
$g\in C^{2,2}_G$ be a solution of \eqref{Cosserat Lagrangian 0.5}
without constraints
i.e. $\mathcal{C}^1=\mathcal{C}^2=0$ and $\widetilde{a}=0$
in \eqref{eq:density-constraint}.
Then the following holds true.
    \begin{itemize}
    \item[(i)] If the force $F$ is dissipative, then for any solution $g$ of system \eqref{Cosserat Lagrangian 0.5}
    the associated mechanical energy is non-increasing.
    \item[(ii)] If there are no forces acting on $g$, i.e. $F=0$, then for any solution $g$ of the system \eqref{Cosserat Lagrangian 0.5} the associated mechanical energy mechanical is constant.
\end{itemize}
\end{corollary}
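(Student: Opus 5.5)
The plan is to specialize Proposition~\ref{energy} to the unconstrained case and read off the sign of each term. Since $\mathcal{C}^1=\mathcal{C}^2=0$ and $\widetilde{a}=0$, the term $\int_0^1\langle\lambda,\widetilde{a}(t,g,\xi_g)\rangle\,dx$ in \eqref{eq:der-E} vanishes. The multiplier $\lambda$ also drops out of \eqref{Cosserat Lagrangian 0.5}, so it plays no role. Hence, for every $t\in[0,T]$, formula \eqref{eq:der-E} reduces to
\[
\frac{d}{dt}E(t)=\int_0^1\langle \mathcal{F}(t,x,g,\xi_g,\partial_x\xi_g,\eta_g),\eta_g\rangle\,dx+\langle F_0(\cdot),\eta_g(t,0)\rangle+\langle F_1(\cdot),\eta_g(t,1)\rangle .
\]
Here the arguments of $F_0,F_1$ are the boundary values of $g,\xi_g,\eta_g$ at $x=0$ and $x=1$.

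For (i), I use dissipativity. Inequality \eqref{dissipatif 1} gives that the integrand is pointwise nonpositive, and \eqref{dissipatif 2} gives that both boundary terms are nonpositive. I evaluate these inequalities along the given solution $g$ at $x=0$ and $x=1$, and take care to match the argument ordering $(\xi,\eta)$ used in Assumption~\ref{Ass : force}. It follows that $\frac{d}{dt}E\le 0$ on $[0,T]$. Since $E$ is $\mathscr{C}^1$ in $t$, because $\eta_g,\xi_g$ are $C^1$ in time as already used in the proof of Proposition~\ref{energy}, the mean value theorem shows that $E$ is non-increasing.

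For (ii), setting $F=0$ means $\mathcal{F}=0$ and $F_0=F_1=0$. All terms then vanish, so $\frac{d}{dt}E\equiv 0$ and $E$ is constant on $[0,T]$.

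The only delicate point is bookkeeping rather than analysis. The dissipativity definition writes $\mathcal{F}$ without its $\partial_x\xi$ argument, and the boundary forces are written with slightly inconsistent argument orders in the statement. I would state explicitly that the dissipativity inequalities are understood for the full argument lists appearing in \eqref{eq:der-E}, after which the argument is immediate.
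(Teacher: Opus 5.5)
Your proposal is correct and is exactly the route the paper intends, since the paper calls the corollary immediate from Proposition~\ref{energy}: with $\mathcal{C}^1=\mathcal{C}^2=0$ and $\widetilde{a}=0$, formula \eqref{eq:der-E} reduces to the distributed power $\int_0^1\langle\mathcal{F},\eta_g\rangle\,dx$ plus the boundary powers $\langle F_0,\eta_g(t,0)\rangle+\langle F_1,\eta_g(t,1)\rangle$, which are nonpositive under dissipativity and vanish when $F=0$. Your remarks on the inconsistent argument lists of $\mathcal{F},F_0,F_1$ are a sensible clarification, not a gap.
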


\begin{remark}\label{re:noether:1}
The previous result is not surprising as it corresponds to conservation (resp. dissipation) of the mechanical energy in the case of free motion (resp. under dissipative forces).
In particular, the conservation of energy (ii) can be recovered using Noether's theorem.
Nevertheless, Proposition \ref{energy} and its Corollary \ref{Cor : energy} generalize the energy
result established in \cite[Proposition 2.1]{rodriguez2020boundaryfeedbackstabilizationintrinsic}.

\end{remark}

\subsection{Illustrative examples}
\label{Illustrative examples}

In this section, we derive from Corollary \ref{Equation left-invariant} the equations of motion for
various examples of forces and constraints. The main objective of the examples is to express the Cosserat-Poincaré equations as systems of partial differential equations with appropriate unknowns,
especially in the presence of constraints, where attention is paid to reducing the number of unknowns, often by expressing some components of the time and space  twists $\eta,\xi$ in terms of the Lagrange multiplier $\lambda$.
Note that in this paper, we do not prove the existence and uniqueness of solutions for the PDE systems we obtain.


\subsubsection{Without constraints}

We first consider examples without constraints.
In this subsection, only the first example of Cosserat-Poincaré equation is left-invariant.
\begin{exemple}[Left-invariant free motion]
Let us consider the motion of beam in the case where there is no force and no constraint acting on it. This is a particular instance of Corollary~\ref{cor:CPE-left-invariant}, where one simplifies \eqref{Cosserat Lagrangian 1} with $\mathcal{F}=0$, $\mathcal{C}^1=\mathcal{C}^2=0$ and $\widetilde{a}=0$.
Given initial conditions $(\eta^0,\xi^0)\in (C^1_{\mathfrak{g}})^2$, the time and space  twists $\eta, \xi : [0,T] \times [0,1] \rightarrow \mathfrak{g}$ satisfy the following nonlinear hyperbolic system of partial differential equations:
\begin{subequations}
\label{Cosserat Lagrangian exemple 1}
\begin{empheq}[left=\empheqlbrace]{align}
&I\partial_t\eta - \partial_x\big(H(\xi - \xi_0)\big) - \operatorname{ad}_{\eta}^*(I\eta) + \operatorname{ad}_{\xi}^* H(\xi - \xi_0) = 0
\quad & \forall (t,x)\in[0,T]\times[0,1],\\
&\partial_x\eta - \partial_t\xi = [\eta,\xi]
\quad & \forall (t,x)\in[0,T]\times[0,1],\\
&\xi(t,0)=\xi_0(0), \quad \xi(t,1)=\xi_0(1)
\quad &\forall t\in[0,T],\\
&(\eta(0,x),\xi(0,x))=(\eta^0(x),\xi^0(x))
\quad &\forall x\in[0,1].
\end{empheq}
\end{subequations}

\end{exemple}

\begin{exemple}[With weight] Here we consider the case where
the Lie group $G$ is $\SE(3)$, the force acting on the beam
is the \emph{gravitational force (or weight)} and we have no additional constraints.
This force near the surface of the earth is well approximated by
\begin{equation}
\label{def:gravity force}
    \mathcal{F}(g,\xi,\partial_x\xi,\eta)=\begin{pmatrix}
        0 \\ \rho A_0 R^Ta_\mathbf{g}
    \end{pmatrix},\quad a_\mathbf{g}=-\begin{pmatrix}
        0& 0&\mathbf{g}
    \end{pmatrix}^T,
    \end{equation}
    where $\rho>0$ is the mass density of the beam, $A_0>0$ is the cross-sectional area of the beam, $\mathbf{g}\approx 9.81\, m/s^2$ the gravitational acceleration and $R$ is the rotational component of the movement $g\in C^{2,2}_G$.
The corresponding Cosserat-Poincar\'e equation
is given as a particular instance of Corollary~\ref{Equation left-invariant} with $\mathcal{F}$ defined in \eqref{def:gravity force}
and with $\mathcal{C}^1=\mathcal{C}^2=0$, $\widetilde{a}=0$.
Given initial conditions $(g^0,  (g^0)^{-1}g^1)$ in $C^2_{\SE(3)}\times C^2_{\mathfrak{se}(3)}$, the solution $g\in C^{2,2}_{\SE(3)}$ of the Cosserat-Poincar\'e equation on $[0,T]\times[0,1]$ verifies
\begin{equation}
\begin{aligned}
    I\partial_t(g^{-1}\partial_tg)-\partial_x\big(H((g^{-1}\partial_xg)-\xi_0)\big)-\operatorname{ad}_{g^{-1}\partial_tg}^*I(g^{-1}\partial_tg) +\operatorname{ad}_{(g^{-1}\partial_xg)}^*H((g^{-1}\partial_xg)-\xi_0)=\begin{pmatrix}
        0 \\ \rho A (R^Ta_\mathbf{g})
    \end{pmatrix}
\end{aligned}
\end{equation}
with the boundary conditions $\partial_xg(t,0)=g(t,0)\xi_0(0)$,
$\partial_xg(t,1)=g(t,0)\xi_0(1)$ for all $t\in [0,T]$
with $\xi_0(0),\xi_0(1)\in \mathfrak{se}(3)$ and the initial conditions $g(0,x)=g^0(x)$, $\partial_t g(0,x)=g^1(x)$ for all
$x \in [0,1]$.
\end{exemple}

\subsubsection{With constraints}
We consider in this section some examples of Cosserat beams subject to constraints.
All the cases considered are left-invariant.

\begin{exemple}[Rigid body constraint]\label{ex:Rigid body constraint:2}
We return to Example \ref{Rigid body constraint} and consider a rigid body beam, i.e., an elongated body that cannot deform over time.
It turns out that the Euler-Poincaré equation (see \cite{holm1998eulerpoincareequationssemidirectproducts})
\begin{equation}
\label{eq:dynamic Rigid body ODE}
    \begin{cases}
        \mathcal{M} \partial_t  \eta_b(t) - \operatorname{ad}^*_{\eta_b(t)}\mathcal{M} \eta_b(t) = 0, \quad \forall t\in [0,T]\\
        \eta_b(0)=\eta^0(0)
    \end{cases}
\end{equation}
The dynamics of such rigid bodies can be obtained from the Cosserat-Poincaré equation subject to the rigid body constraint introduced in Example \ref{Rigid body constraint}.
More precisely, to obtain \eqref{eq:dynamic Rigid body ODE}, one applies Corollary~\ref{cor:CPE-left-invariant} with $\mathcal{F}=0$, $\mathcal{C}^1=\operatorname{ad}_\xi,$ $\mathcal{C}^2=I_{\mathrm{d}}$ and $\widetilde{a}=0$,
and defines $\mathcal{M}:= \int_0^1 \operatorname{Ad}_{h(x)}^* I(x) \operatorname{Ad}_{h(x)} dx$,
$h(x):=g(0,x)^{-1}g(0,0)$
and $\eta_b(t):=\eta(t,0)$,
where $\eta\in C^{1,2}_{\mathfrak{g}}$ is the time  twist in \eqref{Cosserat Lagrangian 1}.
The complete derivation is given in  Appendix~\ref{sec:rigid-body}.

\end{exemple}

\begin{exemple}[Kirchhoff beam]\label{ex:Kirchhoff_beam:1}
Let us study the dynamic of a Kirchhoff beam, which is a beam having its linear space rate part (stretching component) at rest for all times, i.e. satisfying the constraint \eqref{eq:Kirchhoff constraint}. We model the beam as a movement $g\in C^{2,2}_{\SE(3)}$ and define the matrix  $P_3 = \begin{pmatrix}
   0_{3 \times 3} & I_3
\end{pmatrix}$ that projects the space twist \eqref{def:twistSE(k)} onto its linear part.
The Kirchhoff beam dynamics, without additional forces, is obtained from Corollary~\ref{cor:CPE-left-invariant} by setting $\mathcal{F}=0$, $\mathcal{C}^1=P_3\operatorname{ad}_\xi,$ $\mathcal{C}^2=P_3$ and $\widetilde{a}=0$ in \eqref{Cosserat Lagrangian 1}.
Under \eqref{eq:Kirchhoff constraint} and given initial conditions $\eta^0, \xi^0$ in $C^1_{\mathfrak{g}}$, this dynamics is governed by the following Cosserat-Poincaré equation
\begin{subequations}
    \label{Cosserat Lagrangian example 2}
    \begin{empheq}[left=\empheqlbrace]{align}
    &I \partial_t \eta - \partial_x \Lambda - \operatorname{ad}_{\eta}^T I \eta + \operatorname{ad}_{\xi}^T  \Lambda  + \operatorname{ad}_{\xi}^T P_3^T \lambda - \partial_x \big(P_3^T \lambda \big) = 0 \quad & \forall (t,x)\in[0,T]\times[0,1], \\
    &\label{constraint Lagrangian example 2} P_3 \partial_x \eta = P_3 [\eta, \xi] \quad & \forall (t,x)\in[0,T]\times[0,1], \\
    &\partial_x \eta - \partial_t \xi = [\eta, \xi] \quad & \forall (t,x)\in[0,T]\times[0,1],
    \label{constraint Lagrangian example 2:structure_eq} \\
    &\Lambda(t,0)= - P_3^T \lambda(t,0), \quad \Lambda(t,1)=- P_3^T \lambda(t,1) \quad &\forall t \in [0,T], \\
    &(\eta(0,x), \xi(0,x)) = (\eta^0(x), \xi^0(x)) \quad & \forall x \in [0,1],
    \end{empheq}
\end{subequations}
where the unknown functions are $\eta, \xi,\lambda$ and we have denoted $\Lambda=H(\xi-\xi_0)$ as usual (see \eqref{def:stress}).
Equation~\eqref{Cosserat Lagrangian example 2} can also be expressed in coordinates as shown in Appendix~\ref{Kirchhoff beam Appendix}.

\end{exemple}

\begin{exemple}[Follow the leader motion] \label{ex:followtheleader}
Snakes generate locomotion by undulating along their backbone.
As they move forward, their body follows the path traced by
their head. Modeling a snake as a planar movement $g\in C^{2,2}_{\SE(2)}$ with a position of the centerline $p\in C^{2,2}_{\mathbb{R}^2}$ and a deformation $R\in C^{2,2}_{\SO(2)}$,
it is classical (cf. \cite{6072271}) to model the snake's motion by the following transport equations
on $[0,T]\times [0,1]$:
\begin{equation}
    \begin{aligned}
    \begin{cases}
    \label{eq:follow the leader position}
        \partial_t p(t,x) - v(t) \partial_x p(t,x) =0,\\
     p(t,1) = p_h(t),  \quad p(0,x)=p^0(x),
    \end{cases}
    \end{aligned}
\end{equation}
and \begin{equation}
    \begin{aligned}
    \begin{cases}
     \label{eq:follow the leader deformation}
        \partial_t R(t,x) - v(t) \partial_x R(t,x) =0,\\
        R(t,1) = R_h(t),  \quad R(0,x)=R^0(x).
    \end{cases}
    \end{aligned}
\end{equation}
In equations~\eqref{eq:follow the leader position} and \eqref{eq:follow the leader deformation}, $(p^0,R^0)$ is the initial pose of the snake, $(p_h,R_h)$ is the pose of the head
and the axial velocity $v=\lVert p_h \rVert$ is assumed to be positive and governed on $[0,T]$ by
\begin{equation}
\begin{aligned}
\label{eq:vitesse controlled}
\begin{cases}
v'(t) = U(t),\\
v(0)=v^0,
\end{cases}
\end{aligned}
\end{equation}
where $v^0>0$ and $U : [0,T]\rightarrow \mathbb{R}$ is a control function. Given a positive axial velocity function $v$,
the deformation $(p,R)$ follows the trajectory traced by the pose of the head in the sense that, for every $(t,x)\in[0,T]\times[0,1]$
the value of $(p(t,x),R(t,x))$ can be expressed as $(p_h(s),R_h(s))$ for some $s\in[0,t]$.
Indeed, for a given $(t,x)\in [0,T]\times [0,1]$,
this $s=s(t,x)$ can be determined from the (common) characteristic direction of the above system of equations
and is given by $s=t-h$, where $h=h(t,x)\in [0,t]$
is the solution of $\int_{t-h}^t v(r)dr=1-x$ (assuming a solution exists).

In order to recover the previous equations from Cosserat-Poincaré equation, we show that the solution $g \in C^{2,2}_{\SE(2)}$ of Corollary~\ref{cor:CPE-left-invariant} subject to the follow-the-leader constraint \eqref{eq:Follow-The-Leader Constraint}, the Kirchhoff constraint \eqref{eq:Kirchhoff constraint}, and a boundary force satisfies equations~\eqref{eq:follow the leader position}-\eqref{eq:vitesse controlled}. In particular, the first component $V_1$ (in $\R^2$) of the linear velocity at the head $x=1$ coincides with the axial velocity $v$, and the functions $R_h$, $p_h$ and $U$ are boundary conditions at the head. The derivation proceeds in two steps with all details given in Appendix~\ref{Computation Follow the leader}.

\end{exemple}

\section{Modeling actuation and control}\label{sec:control}

In this section, we present the forces acting on a soft robot modeled as a movement $g\in C^{2,2}_{\SE(3)}$. In particular, we classify these forces according to their physical nature and their role as potential actuators
for locomotion. Then, using these actuation effects, we will derive some controls systems for the robot locomotion of Cosserat-Poincar\'e types.

We next distinguish two types of forces: \emph{the uncontrolled forces} which act on the robot but cannot be specified by an operator and \emph{the actuation forces} which can be prescribed by an operator of the robot.

\subsection{External effects}
\label{External effect}


In what follows, we present a list of relevant forces for locomotion problems, starting with the uncontrolled forces.
\begin{itemize}
 \item The effect of a current in the water, the wind or the gravitation can be modeled by a vector field $\vec{F}:\mathbb{R}^3 \rightarrow \mathbb{R}^3$. 
 The corresponding \emph{force exerted by
 $\vec{F}$} can be expressed as
    \begin{equation}
    \label{def:force current}
    \mathcal{F}(g)=\begin{pmatrix}
        0 \\ R^T\Vec{F}
    \end{pmatrix},
    \quad g=(p,R)\in\SE(3)
\end{equation}
 with $R\in\SO(3)$ being the deformation (rotation) of the cross-sections associated to $g$. In particular, we recover the expression for weight \eqref{def:gravity force} by taking $\vec{F}=\rho A_0 a_\mathbf{g}$ with
 $a_\mathbf{g}=-\begin{pmatrix}
        0& 0&\mathbf{g}
    \end{pmatrix}^T$, $\mathbf{g}$ being the gravitational acceleration,
 $\rho>0$ the mass density of the beam, $A_0>0$ the area of the cross-section. Note that (nonzero) forces exerted by uniform fields are not left-invariant as they depend on the rotation $R$.
\begin{remark}
A force $\mathcal{F}$ exerted by a constant vector field $\Vec{F}$ derives from the potential energy $E_{\Vec{F}}(g)= - \langle \Vec{F},p\rangle$, i.e. it satisfies
  $\mathcal{F}(g) = - g^*\nabla E_{\Vec{F}}(g)$,
where $p\in\R^3$ is the position (translation) associated to $g$, $\nabla$ is the gradient with respect to $g$ and $g^*\nabla E_{\Vec{F}}(g)$ is the element of $\mathfrak{se}(3)$ that satisfies $( g^*\nabla E_{\Vec{F}}(g))^Th = \operatorname{tr}(\nabla E_{\Vec{F}}(g)^T gh)$ for all $h\in \mathfrak{se}(3)$.
Note that $g^*\nabla E_{\Vec{F}}(g)\in \mathbb{R}^6$, where $g,\nabla E_{\Vec{F}},h\in\mathbb{R}^{4\times 4}$. This slight abuse of notation is explained in Remark~\ref{ex:formese(k)}.
Forces deriving from a potential energy are called conservative forces and play an important role in dynamics as they conserve the following energy function:
\begin{equation}\label{energy extented}
    E(t)=\int_0^1 \mathcal{T}_I(\eta_g(t,x)) + \mathcal{U}_{H,\xi_0}(\xi_g(t,x)) + E_{\Vec{F}}(g(t,x))dx.
\end{equation} 
 \end{remark}


\item Due to the nature of the environment, the robot can often hit immovable obstacles on the ground. Assuming that the obstacle is a set $M \subset \mathbb{R}^3$ admitting an outward unit normal vector $n(x)$ at each point $x\in \partial M$ of the boundary, the \emph{obstacle force of reaction}, taken as a regularize repulsive potential~\cite{1087247},  is defined as
\begin{equation}
    F(g,\xi,\partial_x\xi,\eta) = \begin{pmatrix}

   0 \\ \frac{n(p)}{(\operatorname{dist}(p,M) + \varepsilon)^\gamma}  \end{pmatrix},
\end{equation}
where $p$ is the position associated to $g$, $\gamma>0$, $\varepsilon$ is a small positive constant and $\operatorname{dist}(p,M) = \operatorname{inf}\{\lVert p-x\rVert \mid x\in M\}$
is the distance of $p$ from $M$.
Note that the obstacle force can also be modeled as a reaction force of a unilateral constraint~\cite{10494907}.

\item The robot immersed in a fluid or moving on the ground is submitted to \emph{friction forces} defined as
    \begin{equation}
        \label{def : reaction force}
        \mathcal{F}(g,\xi,\partial_x\xi,\eta) = f(\eta),
    \end{equation}
where $f : \mathfrak{se}(3) \rightarrow \mathfrak{se}(3)$ satisfies $\langle f(\eta), \eta \rangle \leq 0$ for all $\eta \in \mathfrak{se}(3)$. In particular, \emph{viscous friction forces}~\cite{Cox_1970} and \emph{dry friction forces}~\cite{brogliato1996nonsmooth} can respectively be modeled as
\begin{equation}
f(\eta) = -\mu_1 \eta \quad\textrm{ and }\quad
f(\eta) = -\mu_2 \frac{ \eta}{ \lVert \eta \rVert },
\end{equation}
where $\mu_1$ and $\mu_2$ are diagonalizable matrices with positive diagonal entries.
\end{itemize}

\subsection{Actuation effects}
\label{actuation effect}

Let us now present some classical actuation effects.
\begin{itemize}
\item Assuming that a distribution of magnetic passive dipoles is attached to the robot, a prescribed magnetic field can be used to exert forces on it and induce locomotion. For a distribution of magnetic dipoles $M : [0,1] \rightarrow \mathbb{R}^3$ expressed in the cross-sectional frame and a spatially uniform magnetic field $b : [0,T] \rightarrow \mathbb{R}^3$, the \emph{magnetic force} is given as
\begin{equation}
    \mathcal{F}(g)=\begin{pmatrix}
        M \times R^T b \\ 0
    \end{pmatrix}
\end{equation}
where $R$ is the rotational part of $g\in C^{2,2}_{\SE(3)}$ (see \eqref{def:SE(k)}). Note that this force is purely rotational in contrast to the weight and current forces, which only have translational components.
    \item Snakes often limit the contact points with the ground.  It is therefore relevant to study locomotion with localized actuation forces at a finite number of contact points. Taking $N$ contact points $(x_i)_{1,..,N}\in [0,1]$, we approximate the \emph{forces of contact} as
    \begin{equation}
        \mathcal{F}_i(t)= \mathbbm{1}_{[x_i-\varepsilon,x_i+\varepsilon]}(x)f_i(t),
    \end{equation}
    where $f_i:[0,T] \rightarrow \mathfrak{se}(3)$ can be prescribed and $\varepsilon>0$ is a small positive constant such that $[x_i-\varepsilon,x_i+\varepsilon] \subset (0,1)$ for all $i\in \{ 1,...,N\}$.
    \item One can also consider \emph{boundary forces} at the extremities $x=0$ and $x=1$ of the robot, defined as
    \begin{equation}
    \label{def:boundary force}
        F_0(t) = f_0(t), \quad  F_1(t) = f_1(t),
    \end{equation}
    where $f_0,f_1:[0,T]\rightarrow \mathfrak{se}(3)$.


\begin{remark}[Internal force]\label{rem:internal force}
In equations~\eqref{Cosserat Lagrangian 0.5} and ~\eqref{Cosserat Lagrangian 1}, the stress $\Lambda(t,x,\xi(t,x))=H\big(\xi(t,x)-\xi_0(x)\big)$ can be replaced by the expression
\begin{equation}
    \widetilde{\Lambda}(t,x)=\Lambda(t,x,\xi(t,x)) + \Lambda_{\mathrm{int}}(t,x,\xi(t,x))
\end{equation}
where $\Lambda_{\mathrm{int}} \in \mathscr{C}^2([0,T]\times [0,1]\times \mathfrak{g};\mathfrak{g})$ models a force exerted at section $x$ by the sections $[x,1]$ on the sections $[0,x]$. This substitution is possible as we can consider in equation~\eqref{Cosserat Lagrangian 0.5} the so-called \emph{internal force $F_{\mathrm{int}}$ induced by $\Lambda_{\mathrm{int}}$} defined by
\begin{equation}
\begin{aligned}
    \langle F_{\mathrm{int}}(t,h,\eta) , v \rangle =& \int_0^1 \langle -\Lambda_{\mathrm{int}}(t,x,\xi_h(x)), v'(x)\rangle dx - \int_0^1\langle \operatorname{ad}_{\xi_h(x)}^*\Lambda_{\mathrm{int}}(t,x,\xi_h(x)), v(x)\rangle dx\\
    =& \int_0^1 \langle \frac{d}{dx} \Lambda_{\mathrm{int}}(t,x,\xi_h(x))-\operatorname{ad}_{\xi_h(x)}^*\Lambda_{\mathrm{int}}(t,x,\xi_h(x)), v(x)\rangle dx \\
    &- \big[\langle \Lambda_{\mathrm{int}}(t,x,\xi_h(x)),v(x) \rangle\big]_0^1
    \end{aligned}
\end{equation} for all $(t,h,\eta)\in [0,T]\times C^2_G \times C^2_\mathfrak{g}$, $v\in C^1_\mathfrak{g}$, or equivalently using the form of Assumption~\ref{Ass : force} with the distributed part
\begin{equation}
\label{def:internal force dens}
    \mathcal{F}_{\mathrm{int}}(t,x,\xi,\partial_x\xi)=
    \partial_x \Lambda_{\mathrm{int}}(t,x,\xi) + D_\xi \Lambda_{\mathrm{int}}(t,x,\xi) \cdot \partial_x \xi - \operatorname{ad}_\xi^* \Lambda_{\mathrm{int}}(t,x,\xi) , \end{equation}
    and the punctual parts
    \begin{equation}
    \label{def:internal force bound}
   F_0(t,\xi)=\Lambda_{\mathrm{int}}(t,0,\xi), \quad F_1(t,\xi)=-\Lambda_{\mathrm{int}}(t,1,\xi).
\end{equation}
\end{remark}
\begin{remark} The internal forces~\eqref{def:internal force dens}-\eqref{def:internal force bound} as defined previously can also be interpreted as reaction forces (see Remark~\ref{rem:reaction force}) of the reference space twist constraint~\eqref{eq:reference shape}.
This is demonstrated in Lemma~\ref{lemma: equivalence inter react forces}.
\end{remark}

\end{itemize}


In this context, we present the tendons model :

\begin{itemize}
\item  Some bio-inspired robots are equipped with tendons to model the internal dynamic of animal such as
snakes (\cite{canali2022design},\cite{rao2021model}). In particular, tendons have been used in static regime to control continuous robots \cite{tummers2023cosserat}. Thus it is relevant to introduce the tendon force as an actuation effect on the shape of the robot.
Given $N$ tendons indexed by $i\in \{1,\dots,N\}$  passing through the sections at coordinates $D_i : [0,1] \rightarrow \mathbb{R}^3$,
expressed in the cross-sectional frame (see Remark \ref{rem:mechanics})
and pulled by the tension $T_i : [0,T] \rightarrow \mathbb{R}^+$, the \emph{tendon force} is an internal force~\eqref{def:internal force dens}-\eqref{def:internal force bound} induced by
\begin{equation}
 \label{def:internal tendon force}
\Lambda_{\mathrm{int}} (t,x,\xi)
= \sum_{i=1}^{N}
\frac{T_i(t)}{\left\lVert \Gamma + K \times D_i(x) + D'_i(x) \right\rVert}
\begin{pmatrix}
    D_i(x) \times \big( \Gamma + K \times D_i(x) + D'_i(x) \big)\\
 \big( \Gamma + K\times D_i(x) + D_i'(x)\big)
\end{pmatrix},
\end{equation}
where $\xi=\begin{pmatrix}
      K \\ \Gamma
    \end{pmatrix}$.
 Hence the tendon force is left invariant and has a complicated structure. To further simplify the presentation, we assume that the tendons cross the sections nearly at their center, i.e. $D_i \approx 0$ for all $i\in \{1,\dots,N\}$. In that case
 $\Lambda_{\mathrm{int}}= \begin{pmatrix}
         0 \\ u_{act},
     \end{pmatrix}$,
     where $u_{act} : [0,T]\times [0,1] \rightarrow \mathbb{R}^3$ is an input function. In particular, if the tendons are only equipped in $N$ subsections $A_i\subset [0,1]$ of the robot, we consider the input function as a piecewise constant function $u_{act}(t,x)= \sum_{i=0}^N u_{act,i}(t)\mathbbm{1}_{A_i}(x)$.
\end{itemize}

\subsection{Towards control systems viewpoint}\label{sec:Towards control systems viewpoint}

Having introduced several input forces in Section~\ref{actuation effect}, we present here control problems for the locomotion of soft robots. For instance, one can study the ability to move the robot from one arbitrary configuration to any other  using actuation forces (also called control inputs).
Note that there exist only a few general control results applying to the Cosserat-Poincar\'e equations we are considering, cf. \cite{rodriguez2020boundaryfeedbackstabilizationintrinsic}. Moreover,
only local controllability results have been established, cf. \cite{strohmeyer2018networks} for a planar robot (modeled by a movement $g\in C^{2,2}_{\SE(2)}$) using boundary controls \eqref{def:boundary force}.
In this section, we obtain rather simple controllability results in the case of full actuation of the distributed forces, using an internal force combined with a boundary force.

Let us consider the control system deriving from the Cosserat-Poincaré  equation~\eqref{Cosserat Lagrangian 0.5} on $[0,T\times [0,1]$\begin{subequations}
\label{eq:control system}
    \begin{empheq}[left=\empheqlbrace]{align}
    \label{eq:control system dyn}&\partial_t(I\eta)-\partial_x\Lambda-\operatorname{ad}_{\eta}^T(I\eta)+\operatorname{ad}_{\xi}^T\Lambda=\mathcal{F}+u_{\mathrm{d}}, \\
    &\partial_x g=g\xi, \quad \partial_t g=g\eta,\label{eq:control system reconstruction}\\
    &\Lambda(t,0)=-F_0(t) - u_0(t),\label{eq:control system bound 1}\\
    &\Lambda(t,1)= F_1(t) + u_1(t),\label{eq:control system bound 2}\\
    \label{eq:control system initial condition}& g(0,\cdot)=g^0, \quad \eta(0,\cdot)=\eta^0,
    \end{empheq}
    \end{subequations}
    where $\Lambda=H(\xi-\xi_0)$ is the stress defined in \eqref{def:stress}, $g^0 : [0,1] \rightarrow G,\eta^0 :[0,1]\rightarrow \mathfrak{g} $ are the initial conditions, $\mathcal{F} :[0,T]\times[0,1]\times G\times \mathfrak{g}^3 \rightarrow \mathfrak{g}$ and $F_0,F_1 : [0,T] \rightarrow \mathfrak{g}$ are respectively the distributed part and the punctual part of an uncontrolled force $F$, $u_{\mathrm{d}} : [0,T] \rightarrow \mathscr{U}_{\mathrm{d}}$ and $u_0,u_1 : [0,T] \rightarrow \mathscr{U}_b$ are measurable control forces with admissible control spaces $\mathscr{U}_{\mathrm{d}}$ and $\mathscr{U}_b$. In the following, we assume that $\mathscr{U}_{\mathrm{d}}=L^2([0,1];\mathfrak{g})$ and $\mathscr{U}_b=\mathfrak{g}$.

We consider next a concept of (weak) solution for \eqref{eq:control system} defined on the state space
${\cal{X}}_0$ and its subspace ${\cal{X}}_1$ defined as
$$
{\cal{X}}_0:=H^1((0,1);G)\times L^2([0,1];\mathfrak{g}),\quad
{\cal{X}}_1:=C^2_G\times C^2_\mathfrak{g}.
$$
On the other hand, we also need to consider the
space ${\cal{Y}}$ defined as
$$
{\cal{Y}}=\mathscr{C}^1\big([0,T];L^2((0,1);G)\big) \cap \mathscr{C}^0\big([0,T];H^1((0,1);G)\big).
$$
\begin{definition}
Let $T>0$, $(g^0,\eta^0)\in {\cal{X}}_0$,
$\mathcal{F},u_\mathrm{d}\in L^2([0,T]\times[0,1];\mathfrak{g})$, $F_0,F_1,u_0,u_1 \in L^2([0,T];\mathfrak{g})$. A (weak) solution to the problem~\eqref{eq:control system} is a function $g\in {\cal{Y}}$ verifying $g(0,\cdot)=g^0,$ and
    \begin{equation}
    \label{eq:control system weak form}
        \begin{aligned}
            \hspace{-2mm}\int_0^T \int_0^1 &\langle \Lambda, \partial_x \phi + \operatorname{ad}_{\xi} \phi  \rangle -\langle I\eta , \partial_t \phi + \operatorname{ad}_{\eta} \phi \rangle  - \langle \mathcal{F} + u_\mathrm{d} , \phi \rangle dx dt
            = \int_0^1 \langle I\eta^0(x),\phi(0,x)\rangle- \langle I\eta(T,x),\phi(T,x)\rangle \,  dx\\ +& \int_0^T \langle F_0(t)+u_0(t), \phi(t,0) \rangle + \langle F_1(t)+u_1(t), \phi(t,1) \rangle \, dt\qquad \forall \phi \in C^{1,1}_\mathfrak{g}
        \end{aligned}
    \end{equation}
    with $\Lambda = H(\xi-\xi_0)$, $\xi = \xi_g$ and $\eta=\eta_g$.
    \end{definition}
    \begin{remark}Equation~\eqref{eq:control system weak form} is well-defined for any $g\in {\cal{Y}}$. Indeed, for such $g$'s, it is easy to prove that $\eta_g,\xi_g \in \mathscr{C}^0([0,T]; L^2([0,1]; \mathfrak{g}))$ as $H^1((0,1);G)$ admits a continuous injection into $L^{\infty}([0,1];G)$. Then by injection of the space $\mathscr{C}^0([0,T];L^2([0,1];\mathfrak{g))}$ into $L^2([0,T]\times [0,1];\mathfrak{g})$, we deduce that $\eta_g,\xi_g \in L^2([0,T]\times [0,1];\mathfrak{g})$ and thus equation~\eqref{eq:control system weak form} is well-defined.
    \end{remark}
    \begin{remark}
    Any solution $g\in C^{2,2}_G$ of ~\eqref{eq:control system weak form} is a solution of \eqref{eq:control system} after integrating by parts and hence $g$ is a classical solution of the partial differential equation~\eqref{eq:control system}.
    \end{remark}
    \begin{remark}
    To the best of our knowledge, there exists no result in the literature on the existence of solutions to the problem~\eqref{eq:control system weak form} for any arbitrary time $T>0$. In this paper, well-posedness is not addressed.
    \end{remark}

We want to address controllability issues for the control system~\eqref{eq:control system}. We give next one possible definition of controllability.

\begin{definition}\label{def:controllability}
     Let $T>0$, $\mathcal{F}\in L^2([0,T]\times[0,1];\mathfrak{g})$, $F_0,F_1 \in L^2([0,T];\mathfrak{g})$. The control system~\eqref{eq:control system} is \emph{controllable in ${\cal{X}}_0$ (resp. in ${\cal{X}}_1$) in time $T$} if for every initial condition $(g^0,\eta^0)$ and final condition $(g^1,\eta^1)$ in ${\cal{X}}_0$ (resp. in ${\cal{X}}_1$), there exist controls $u_{\mathrm{d}}\in L^2([0,T]\times[0,1];\mathfrak{g})$ and $u_0,u_1\in  L^2([0,T];\mathfrak{g})$ such that
     the problem~\eqref{eq:control system} has a solution $g$ satisfying
    \begin{equation}\label{eq:control system final condition}g(T,\cdot)=g^1, \quad \eta_g(T,\cdot)=\eta^1.\end{equation}
\end{definition}

\begin{remark}
The previous definition using
both distributed and boundary controls
can of course be modified: one can consider only
internal forces or boundary forces, or even, by restricting the shape of the internal forces to the form given in Section~\ref{actuation effect}, controls such as tendon or magnetic forces. Every such modification yields a different control problem to be addressed.
\end{remark}


The following proposition shows that the control system~\eqref{eq:control system} is controllable for regular initial and final conditions.



\begin{proposition}\label{pro:total controllability avec bord}
Assume that the Lie group $G$ is path-connected.
Let $T>0$, $\mathcal{F}\in \mathscr{C}^0([0,T]\times [0,1];\mathfrak{g})$ and $F_0,F_1\in \mathscr{C}^2([0,T];\mathfrak{g})$.
Then the system~\eqref{eq:control system} is controllable in ${\cal{X}}_1$ in time $T>0$.
\end{proposition}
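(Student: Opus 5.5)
We argue by direct construction: with a fully actuated distributed control $u_{\mathrm{d}}$ and boundary controls $u_0,u_1$, we choose any smooth motion $g$ joining the prescribed states and then read the controls off the equations. Let $(g^0,\eta^0)$ and $(g^1,\eta^1)$ be in ${\cal{X}}_1=C^2_G\times C^2_{\mathfrak{g}}$. The first step is to build a movement $g\in C^{2,2}_G$ on $[0,T]\times[0,1]$ with
\[
g(0,\cdot)=g^0,\quad \eta_g(0,\cdot)=\eta^0,\quad g(T,\cdot)=g^1,\quad \eta_g(T,\cdot)=\eta^1.
\]
Once $g$ is fixed, $\eta=\eta_g$, $\xi=\xi_g$ and $\Lambda=H(\xi-\xi_0)$ are determined, and \eqref{eq:control system reconstruction} holds by definition. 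We then define the controls by
\[
u_{\mathrm{d}}:=\partial_t(I\eta)-\partial_x\Lambda-\operatorname{ad}_{\eta}^T(I\eta)+\operatorname{ad}_{\xi}^T\Lambda-\mathcal{F},
\quad u_0(t):=-\Lambda(t,0)-F_0(t),\quad u_1(t):=\Lambda(t,1)-F_1(t).
\]
By the regularity $g\in C^{2,2}_G$ (identified with $D^{2,2}_G$ by Remark~\ref{rem:imbrication-Versus-2Var}), we have $\eta\in C^{1,2}_{\mathfrak{g}}$ and $\xi\in C^{2,1}_{\mathfrak{g}}$, so $u_{\mathrm{d}}$ is continuous on the compact set $[0,T]\times[0,1]$. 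Together with the continuity of $\mathcal{F}$, this gives $u_{\mathrm{d}}\in L^2$; similarly $u_0,u_1$ are continuous, hence in $L^2([0,T];\mathfrak{g})$. The system \eqref{eq:control system} then holds classically. Integrating by parts against $\phi\in C^{1,1}_{\mathfrak{g}}$ gives the weak form \eqref{eq:control system weak form}, as in the remark following the definition. Since $g$ is also in ${\cal{Y}}$, the controllability conclusion follows.

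The real work is the construction of $g$, and the main obstacle is making it an honest $G$-valued map. We take the ansatz $g(t,x)=g^0(x)\,\gamma(t,x)$ near $t=0$ and $g(t,x)=g^1(x)\,\tilde\gamma(t,x)$ near $t=T$, gluing the two in the middle.

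Near $t=0$ we set $\gamma(t,x)=\exp\big(\chi(t)\,t\,\eta^0(x)\big)$, where $\chi$ is a smooth cutoff equal to $1$ near $0$ and to $0$ on $[T/3,T]$. Then $\gamma(0,\cdot)=e$ and $\gamma^{-1}\partial_t\gamma(0,\cdot)=\eta^0$. Since $\exp$ is smooth, $\gamma$ has the required mixed $C^{2,2}$ regularity in $(t,x)$ because $\eta^0\in C^2_{\mathfrak{g}}$. By left invariance of $\eta_g$ under multiplication by $g^0(x)$ (the factor $g^0$ does not depend on $t$), we get $\eta_g(0,\cdot)=\eta^0$. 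An analogous construction near $T$ uses $\eta^1$ and $g^1$.

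It remains to connect $g^0$ to $g^1$ on $[T/3,2T/3]$ by a path in $C^2_G$ that is smooth in $t$ and has zero time twist at both ends. Here the path-connectedness of $G$ enters, and this is the delicate point: we need a homotopy $[0,1]\times[0,1]\to G$ from $g^0$ to $g^1$ that is $C^2$ jointly, not merely a pointwise path. We would proceed as follows. Since $G$ is path-connected, pick a smooth path $c$ in $G$ from $g^0(0)$ to $g^1(0)$. The maps $x\mapsto g^0(0)^{-1}g^0(x)$ and $x\mapsto g^1(0)^{-1}g^1(x)$ are determined by their space twists $\xi^0,\xi^1\in C^1_{\mathfrak{g}}$ via the ODE $\partial_x h=h\xi$ (Remark~\ref{re:reconstruction:1}). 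Interpolating $\xi_s=(1-\sigma(s))\xi^0+\sigma(s)\xi^1$, with $\sigma$ smooth, flat at $0$ and $1$, and solving $\partial_x h_s=h_s\xi_s$, $h_s(0)=c(s)$, gives a family depending smoothly on $s$, by smooth dependence of ODE solutions on parameters. Its $x$-regularity is $C^2$, since $\xi_s\in C^1$. Reparametrizing $s=\sigma(\text{rescaled }t)$ with vanishing derivatives at the endpoints makes the time twist vanish there, so the three pieces glue in $C^{2,2}_G$. The parameter-dependence regularity must be checked to land in $D^{2,2}_G$; we expect this to be routine.
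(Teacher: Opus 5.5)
Your argument is correct, and its second half is exactly the paper's: reading $u_{\mathrm{d}},u_0,u_1$ off the equations, and noting that a classical $C^{2,2}_G$ solution is a weak solution in $\mathcal{Y}$. The genuine difference is in how you build the reference motion of Lemma~\ref{lemma:construction explicite trajectoire avec bord}.

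The paper builds it with one global construction, in three steps:
\begin{itemize}
\item It takes a $\mathscr{C}^2$ base curve $t\mapsto g_r(t,0)$ matching both position and velocity at $x=0$.
\item It Hermite-interpolates the space twist in $t$, $\xi_r=\xi^0P_1+\xi^0_tP_2+\xi^1P_3+\xi^1_tP_4$, where the target time derivatives $\xi^i_t=\partial_x\eta^i+[\xi^i,\eta^i]$ are dictated by the structure equation, and then integrates $\partial_x g=g\xi_r$.
\item It recovers the endpoint time twists by observing that $\eta_{g_r}(0,\cdot)$ and $\eta^0$ solve the same linear ODE in $x$ with the same value at $x=0$.
\end{itemize}

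You instead decouple velocity matching from position matching. The factor $\exp(\chi(t)\,t\,\eta^0(x))$, and its analogue near $T$, imposes the time twist directly, with no appeal to the structure equation. A time-flat homotopy then connects $g^0$ to $g^1$; it is itself a space-twist interpolation plus a base curve, as in the paper, but used only for positions. Flatness makes the gluing at $T/3$ and $2T/3$ harmless in $D^{2,2}$.

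Your route is more transparent. The parameter regularity you flag is indeed routine: $s\mapsto\xi_s$ is smooth into $C^1_{\mathfrak{g}}$, and the solution map of the linear ODE is smooth.

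What the paper's formulation buys is that the whole motion is encoded in the pair (base curve, $\xi_r$). One can therefore also prescribe $\xi_r(t,0)$ and $\xi_r(t,1)$ for all $t$ by adding $(1-x)(\cdots)+x(\cdots)$ corrections. This is exactly how Lemma~\ref{lemma:construction explicite trajectoire} removes the boundary controls. In your construction, the factor $\gamma$ changes $\xi_g=\gamma^{-1}\xi^0\gamma+\gamma^{-1}\partial_x\gamma$ at $x=0,1$ in an uncontrolled way, so that extension would need extra work.
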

The proof of the proposition relies on the following lemma.

\begin{lemma}\label{lemma:construction explicite trajectoire avec bord}
   Let $T>0$, an initial condition $(g^0,\eta^0)$  and a final condition $(g^1,\eta^1)$ both in ${\cal{X}}_1$. Then there exists a movement $g_r \in C^{2,2}_G$ such that
   \begin{equation}\label{eq:construction ini and fin cond avec bord}
       (g_r(0,\cdot),\eta_{g_r}(0,\cdot)) = (g^0,\eta^0), \quad      (g_r(T,\cdot),\eta_{g_r}(T,\cdot)) = (g^1,\eta^1). \end{equation}
\end{lemma}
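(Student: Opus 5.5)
We construct $g_r$ explicitly by gluing, in time, a motion that starts from $(g^0,\eta^0)$ to one that ends at $(g^1,\eta^1)$. Once the endpoints match, the interior of the time interval only has to be continuous and sufficiently smooth.

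\textbf{Endpoint pieces.} Near $t=0$ we use the first-order motion
\[
g_a(t,x):=g^0(x)\exp\big(t\,\eta^0(x)\big),
\]
which lies in $G$ because $\exp$ maps $\mathfrak{g}$ into $G$. Since $g^0,\eta^0\in C^2$ and $\exp$ is smooth, $g_a\in C^{2,2}_G$. Moreover $g_a(0,\cdot)=g^0$ and $g_a^{-1}\partial_t g_a(0,\cdot)=\eta^0$. In the same way, near $t=T$ we set
\[
g_b(t,x):=g^1(x)\exp\big((t-T)\eta^1(x)\big).
\]

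\textbf{Connecting the endpoints.} We must join $g_a(t_1,\cdot)$ to $g_b(t_2,\cdot)$ for some $0<t_1<t_2<T$ by a smooth-in-time path in $C^2_G$, with all time derivatives vanishing at the junctions so that the concatenation is $C^2$ in $t$. Fix $\chi\in\mathscr{C}^\infty(\R)$ with $\chi=0$ on $(-\infty,0]$, $\chi=1$ on $[1,\infty)$, and all derivatives vanishing at $0$ and $1$. We reparametrize the connecting path through $\chi\big((t-t_1)/(t_2-t_1)\big)$, and also replace $t$ in $g_a,g_b$ by a flat cutoff so that each piece is stationary near its junction (e.g. $g_a(\min(t,t_1),x)$ smoothed). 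A cleaner option is to write
\[
g_r(t,x)=g^0(x)\exp\big(\phi(t)\eta^0(x)\big)\,c(t,x),
\]
where $\phi$ is smooth, $\phi(t)=t$ near $0$, and $\phi$ is constant afterwards; the endpoint at $T$ is handled symmetrically.

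\textbf{The main obstacle.} The difficulty is producing a $C^2$ path in the function space $C^2_G$ between two arbitrary configurations $h_0,h_1\in C^2_G$. Path-connectedness of $G$ (the hypothesis of Proposition~\ref{pro:total controllability avec bord}) gives pointwise paths, but not paths depending smoothly on $x$. We plan to handle this with the space twists. Let $\xi_i:=h_i^{-1}\partial_x h_i\in C^1_\mathfrak{g}$ and interpolate
\[
\xi(s,\cdot):=(1-\chi(s))\,\xi_0+\chi(s)\,\xi_1 .
\]
We then reconstruct $h(s,\cdot)$ by solving the linear ODE $\partial_x h=h\,\xi(s,\cdot)$ with $h(s,0)=\gamma(s)$. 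Here $\gamma$ is a smooth path in $G$ from $h_0(0)$ to $h_1(0)$, which exists because a path-connected Lie group is smoothly path-connected. Uniqueness of ODE solutions gives $h(0,\cdot)=h_0$ and $h(1,\cdot)=h_1$. Smooth dependence of ODE solutions on parameters shows that $h$ is $C^\infty$ in $s$, with values $C^2$ in $x$ since $\xi(s,\cdot)\in C^1$. It also lies in $G$, because the solution of a left-invariant equation with $\mathfrak{g}$-valued coefficient starting in $G$ stays in $G$.

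\textbf{Assembly.} We take this path between $h_0=g_a(t_1,\cdot)$ and $h_1=g_b(t_2,\cdot)$, after flattening $g_a$ and $g_b$ in time near the junctions as above. The concatenation is then in $C^{2,2}_G$ and satisfies \eqref{eq:construction ini and fin cond avec bord}. The remaining checks of joint regularity in $(t,x)$, using the identification $C^{2,2}_G\simeq D^{2,2}_G$ of Remark~\ref{rem:imbrication-Versus-2Var}, are routine.
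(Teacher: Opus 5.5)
Your construction is correct, but it is organized differently from the paper's.

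The paper does no gluing. It takes one $\mathscr{C}^2$ curve $\gamma$ in $G$ with prescribed endpoints \emph{and} endpoint velocities, $\gamma'(0)=g^0(0)\eta^0(0)$ and $\gamma'(T)=g^1(0)\eta^1(0)$. It then Hermite-interpolates in $t$ both the space twist and its time derivative, using the data $\xi^i=(g^i)^{-1}\partial_x g^i$ and $\xi^i_t=\partial_x\eta^i+[\xi^i,\eta^i]$ at $t=0,T$. Finally it reconstructs $g_r$ from $\partial_x g_r=g_r\xi_r$, $g_r(\cdot,0)=\gamma$. The condition $\eta_{g_r}(0,\cdot)=\eta^0$ comes out indirectly from the structure equation: $\eta_{g_r}(0,\cdot)$ and $\eta^0$ solve the same linear ODE in $x$ with the same value at $x=0$.

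You instead match the velocities directly with the explicit motions $g^0\exp(\phi(t)\eta^0)$ and $g^1\exp(\psi(t)\eta^1)$. The twist reconstruction, which is the same tool the paper uses, is then only needed for a zero-velocity connection between two configurations. The price is three pieces and flat reparametrizations at the junctions.

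What each route buys:
\begin{itemize}
\item Yours avoids the structure-equation step and even yields a trajectory that is $C^\infty$ in $t$.
\item The paper's gives a single formula and encodes the whole motion in $\xi_r$. That is what lets the boundary values $\xi_r(t,0)$, $\xi_r(t,1)$ be prescribed for all $t$ by an affine correction in $x$ in Lemma~\ref{lemma:construction explicite trajectoire}. Your exponential end pieces have time-varying twists at $x=0,1$, so they would have to be redesigned for that variant.
\end{itemize}

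Minor points:
\begin{itemize}
\item Drop the ``cleaner option'' with the unspecified factor $c(t,x)$; it plays no role.
\item The junction configurations are the values of the flattened pieces, e.g.\ $g^0\exp(\phi(t_1)\eta^0)$, not $g_a(t_1,\cdot)$.
\item As in the paper, path-connectedness of $G$ is genuinely needed even though the lemma's statement omits it, because $t\mapsto g_r(t,0)$ must join $g^0(0)$ to $g^1(0)$ inside $G$.
\end{itemize}
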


\begin{proof}[Proof of Lemma~\ref{lemma:construction explicite trajectoire avec bord}]\label{proof:control total avec bord}
  Let $T>0$,  $(g^0,\eta^0)$ and $(g^1,\eta^1)$ in
 ${\cal{X}}_1$.
 We first determine a curve $\gamma$ steering $(g^0(0),\eta^0(0))$ to $(g^1(0),\eta^1(0))$ and then take a suitable $\xi_r$ such that the movement $g_r\in C^{2,2}_G$ defined as the solution of
 \begin{equation}
\begin{cases}
\begin{aligned}\label{eq:construction reference trajectory avec bord1}
   & \partial_x g(t,x) = g(t,x) \xi_r(t,x), \\
   & g(t,0)=\gamma(t).
\end{aligned}
\end{cases}
\end{equation}  satisfies the equations~\eqref{eq:construction ini and fin cond avec bord}.

  The Lie group $G$ being path-connected, we can take a curve $\gamma \in \mathscr{C}^2([0,T],G)$ such that
  $(\gamma(0),\gamma'(0))=(g^0(0),g^0(0)\eta^0(0))$ and $(\gamma(T),\gamma'(T))=(g^1(T),g^1(T)\eta^1(T))$.
  Then we try to determine a space twist $\xi_r$ such that it satisfies the conditions
    \begin{equation}\label{eq:condtion construction initiale et finale avec bord}
    \big(\xi_r(0,x),\partial_t \xi_r (0,x)\big) = \big( \xi^0(x),\xi_t^0(x) \big), \quad    \big(\xi_r(T,x),\partial_t \xi_r (T,x)\big) = \big( \xi^1(x),\xi_t^1(x) \big),
    \end{equation}
    where for $i\in \{0,1\}$,
    \begin{equation}\label{def:xi_t avec bord}
    \xi^i:=(g^i)^{-1}(g^i)', \quad \xi^i_t:= \partial_x\eta^i + [\xi^i, \eta^i].\end{equation}
    Consider four polynomials $P_{1}, P_{2}, P_{3}, P_{4}$ such that
\begin{equation}
 P_{1}(0)=1, \quad P_2'(0)=1, \quad P_{3}(T)=1, \quad  P_{4}'(T)=1,
\end{equation}
and all other values and first derivatives vanish at $t=0$ and $t=T$.
One then easily checks that the function
\begin{equation}\label{eq:interpolation}
    \xi_r(t,x) :=  \xi^0(x)P_{1}(t) + \xi^0_t(x)P_{2}(t) + \xi^1(x)P_{3}(t) + \xi^1_t(x) P_{4}(t)
    \end{equation}
    satisfies conditions~\eqref{eq:condtion construction initiale et finale avec bord} and has the regularity $C^{2,1}_{\mathfrak{g}}$. We can now consider the movement $g_r\in C^{2,2}_{G}$ defined as the solution of \eqref{eq:construction reference trajectory avec bord1}.
By construction, the space twist of the movement $g_r$ satisfies $\xi_{g_r}(t,\cdot)=\xi_r(t,\cdot)$ for all times $t\in[0,T]$. In particular, it holds
\begin{equation}\label{eq:xi_g_r:1}
    \xi_{g_r}(0,\cdot)=\xi_r(0,\cdot)=\xi^0,
    \quad
    \xi_{g_r}(T,\cdot)=\xi_r(T,\cdot)=\xi^1
\end{equation}
and
\begin{equation}\label{eq:D_t_xi_g_r:1}
    \partial_t \xi_{g_r}(0,\cdot)
    =\partial_t \xi_r(0,\cdot)=\xi_t^0,
    \quad
    \partial_t \xi_{g_r}(T,\cdot)=\partial_t \xi_r(T,\cdot)=\xi_t^1
\end{equation}
on $[0,1]$.

Because $g_r(0,0)=\gamma(0)=g^0(0)$ and  $g_r(T,0)=\gamma(T)=g^0(T)$, we deduce that the movement $g_r$ satisfies
\begin{equation}
    g_r(0,\cdot)=g^0, \quad g_r(T,\cdot)=g^1
    \quad\textrm{on $[0,1]$}
\end{equation}
as both functions $g_r(0,\cdot)$ and $g^0$
(resp. $g_r(T,\cdot)$ and $g^1$)
satisfy the same ordinary differential equation~\eqref{eq:construction reference trajectory avec bord1} with the same initial (resp. final) conditions:
$\partial_x g_r(0,x)=g_r(0,x)\xi_r(0,x)=g_r(0,x)\xi^0(x)$
and $\partial_x g^0(x)=g^0(x)\xi^0(x)$ (recall \eqref{def:xi_t avec bord})
and similarly for $g_r(T,x)$ and $g^1$.

Last, given the definition~\eqref{def:xi_t avec bord} of $\xi^0_t$ and $\xi^1_t$, and the structure equation
\begin{equation}
    \partial_t \xi_{g_r} = \partial_x \eta_{g_r} + [ \xi_{g_r},\eta_{g_r}],
\end{equation}
we deduce that
\[
\eta_{g_r}(0,\cdot)=\eta^0,
\quad
\eta_{g_r}(T,\cdot)=\eta^1.
\]
Indeed, the functions $\eta_{g_r}(0,\cdot)$ and $\eta^0$ (resp. $\eta_{g_r}(T,\cdot)$ and $\eta^1$)
are both solutions of the same ordinary differential initial (resp. final) value problems
\begin{equation}
\begin{aligned}
&\begin{cases}
       \partial_x \eta(x)=  \xi^0_t(x) - [ \xi^0(x),\eta(x)], \\
       \eta(0)=\eta^0(0),
       \end{cases}
       \text{and resp. } & \begin{cases}
       \partial_x \eta(x)=  \xi^1_t(x) - [ \xi^1(x),\eta(x)], \\
       \eta(T)=\eta^1(T),
       \end{cases}
       \end{aligned}
\end{equation}
because $\xi^0=\xi_{g_r}(0,\cdot)$, $\xi_t^0=\partial_t\xi_{g_r}(0,\cdot)$,  $\xi^1=\xi_{g_r}(T,\cdot)$, $\xi_t^1=\partial_t\xi_{g_r}(T,\cdot)$
by \eqref{eq:xi_g_r:1}-\eqref{eq:D_t_xi_g_r:1}
and because $\eta_{g_r}(0,0)=g(0,0)^{-1}\partial_t g(0,0)=\gamma(0)^{-1}\gamma'(0)=\eta^0(0)$ and $\eta_{g_r}(T,0)=g(T,0)^{-1}\partial_t g(T,0)=\gamma(T)^{-1}\gamma'(T)=\eta^1(T)$.
We conclude that the movement $g_r\in C^{2,2}_{G}$ satisfies~\eqref{eq:construction ini and fin cond avec bord}.
\end{proof}

\begin{proof}[Proof of Proposition~\ref{pro:total controllability avec bord}]
Let $T>0$, $\mathcal{F}\in \mathscr{C}^0([0,T]\times[0,1];\mathfrak{g})$, $F_0,F_1\in \mathscr{C}^2([0,T];\mathfrak{g})$, an initial condition $(g^0,\eta^0)$  and a final conditions $(g^1,\eta^1)$ both in $\mathcal{X}_1$.
Taking $g_r\in \mathscr{C}^2([0,T]\times[0;1];G)$ given by Lemma~\ref{lemma:construction explicite trajectoire avec bord}, we consider the internal control
\begin{equation}
     u_{\mathrm{d}}:=\partial_t(I\eta_{g_r}) - \partial_x(H(\xi_{g_r} - \xi_0)) - \operatorname{ad}_{\eta_{g_r}}^*(I\eta_{g_r}) + \operatorname{ad}_{\xi_{g_r}}^*(H(\xi_{g_r} - \xi_0))-\mathcal{F}(t,x),
\end{equation} and the boundary controls
\begin{equation}
    u_0(t) = -H( \xi_{g_r}(t)-\xi_0(0)) - F_0(t),
    \quad
    u_1(t) =  H( \xi_{g_r}(t)-\xi_0(1)) - F_1(t).
\end{equation}
It is immediate to check that $g=g_r$ is a solution of~\eqref{eq:control system}.
Given that $g_r$ satisfies \eqref{eq:construction ini and fin cond avec bord}, this proves the controllability of the system.
\end{proof}
Following the same method, it is possible, at the cost of compatibility conditions given in Appendix~\ref{def:compatibility condition},  to prove the controllability of the system~\eqref{eq:control system} in ${\cal{X}}_1$ with no boundary controls, i.e., so that  $u_0=u_1=0$.
\begin{proposition}\label{pro:total controllability}
Assume that the Lie group $G$ is path-connected.
Let $T>0$, $\mathcal{F}\in \mathscr{C}^0([0,T]\times[0,1];\mathfrak{g})$ and $F_0,F_1\in \mathscr{C}^2([0,T];\mathfrak{g})$. The system~\eqref{eq:control system} is controllable for regular initial and final conditions without boundary controls.
In other words, for every initial conditions $(g^0,\eta^0)\in \mathcal{X}_1$ satisfying the initial boundary conditions~\eqref{eq:ini compatibility condition} with $u_0=u_1=0$ and $(g^1,\eta^1)\in  \mathcal{X}_1$ satisfying the final compatibility conditions~\eqref{eq:fin compatibility condition} with $u_0=u_1=0$,
there exists a control $u_{\mathrm{d}}\in \mathscr{C}^0([0,T]\times [0,1]; \mathfrak{g})$ such the solution $g$ of equation~\eqref{eq:control system} with $u_0=u_1=0$ is in space $C^{2,2}_G$ and satisfies
     \begin{equation}g(T,\cdot)=g^1, \quad \eta_g(T,\cdot)=\eta^1.\end{equation}
\end{proposition}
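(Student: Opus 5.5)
The argument follows the proof of Proposition~\ref{pro:total controllability avec bord}. We build a reference movement $g_r\in C^{2,2}_G$ joining $(g^0,\eta^0)$ to $(g^1,\eta^1)$. We then take $u_{\mathrm{d}}$ to be the residual of \eqref{eq:control system dyn} along $g_r$. The only new requirement is that $g_r$ satisfy the boundary conditions \eqref{eq:control system bound 1}--\eqref{eq:control system bound 2} with $u_0=u_1=0$ for all $t\in[0,T]$, i.e.
\begin{equation*}
H(0)\big(\xi_{g_r}(t,0)-\xi_0(0)\big)=-F_0(t),\qquad H(1)\big(\xi_{g_r}(t,1)-\xi_0(1)\big)=F_1(t).
\end{equation*}
Since $H(0),H(1)$ are invertible, these conditions prescribe the boundary values $\xi_{g_r}(t,0)=b_0(t):=\xi_0(0)-H(0)^{-1}F_0(t)$ and $\xi_{g_r}(t,1)=b_1(t):=\xi_0(1)+H(1)^{-1}F_1(t)$. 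Both are $\mathscr{C}^2$ in $t$ because $F_0,F_1\in\mathscr{C}^2$.

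The compatibility conditions \eqref{eq:ini compatibility condition} and \eqref{eq:fin compatibility condition} should be exactly what makes the corner data consistent. They should say that $\xi^0(x_b)=b_{x_b}(0)$ and $\xi^0_t(x_b)=b_{x_b}'(0)$ for $x_b\in\{0,1\}$, and similarly at $t=T$ with $\xi^1,\xi^1_t$. Here $\xi^i_t$ is defined by \eqref{def:xi_t avec bord}, and the derivative condition comes from differentiating the boundary condition in time and using the structure equation. With these in hand, I modify the interpolation \eqref{eq:interpolation}. Let $\tilde\xi(t,x)$ be the function of \eqref{eq:interpolation}, and set
\begin{equation*}
\xi_r(t,x):=\tilde\xi(t,x)+(1-x)\big(b_0(t)-\tilde\xi(t,0)\big)+x\big(b_1(t)-\tilde\xi(t,1)\big).
\end{equation*}
Then $\xi_r(t,0)=b_0(t)$ and $\xi_r(t,1)=b_1(t)$. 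The correction terms and their first time derivatives vanish at $t=0$ and $t=T$ by the compatibility conditions. Hence $\xi_r$ still satisfies \eqref{eq:condtion construction initiale et finale avec bord}, and $\xi_r\in C^{2,1}_{\mathfrak{g}}$.

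Next, as in Lemma~\ref{lemma:construction explicite trajectoire avec bord}, choose $\gamma\in\mathscr{C}^2([0,T];G)$ with the prescribed endpoint values and velocities. This is possible since $G$ is path-connected. Define $g_r$ by \eqref{eq:construction reference trajectory avec bord1}. The ODE-uniqueness argument from that lemma then gives $g_r(0,\cdot)=g^0$, $g_r(T,\cdot)=g^1$, $\eta_{g_r}(0,\cdot)=\eta^0$ and $\eta_{g_r}(T,\cdot)=\eta^1$. Because $\xi_{g_r}=\xi_r$, the boundary conditions with $u_0=u_1=0$ hold identically in $t$. Setting
\begin{equation*}
u_{\mathrm{d}}:=\partial_t(I\eta_{g_r})-\partial_x\big(H(\xi_{g_r}-\xi_0)\big)-\operatorname{ad}^*_{\eta_{g_r}}(I\eta_{g_r})+\operatorname{ad}^*_{\xi_{g_r}}\big(H(\xi_{g_r}-\xi_0)\big)-\mathcal{F},
\end{equation*}
we get that $g=g_r$ solves \eqref{eq:control system} with $u_0=u_1=0$.

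The main obstacle is regularity. We need $g_r\in C^{2,2}_G$ and $u_{\mathrm{d}}\in\mathscr{C}^0$, which requires $\partial_x\xi_r$ to be continuous and $\eta_{g_r}$ to be $C^1$ in $t$. The first holds because $\xi^i\in C^1$ (as $g^i\in C^2_G$), but $\xi^i_t=\partial_x\eta^i+[\xi^i,\eta^i]$ is only $C^1$ when $\eta^i\in C^2_{\mathfrak{g}}$, which $\mathcal{X}_1$ provides. I would check this as in Remark~\ref{rem:structure-minimal}, via the identification $C^{2,2}_G\simeq D^{2,2}_G$. I would also verify that the compatibility conditions in the appendix indeed take the form assumed above, including the time-derivative condition at the corners. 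If they only contain the zeroth-order condition, I would replace the linear-in-$x$ correction by a time cutoff that is flat to first order at $t=0,T$.
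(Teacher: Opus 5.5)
Your proposal is correct and is essentially the paper's proof: Lemma~\ref{lemma:construction explicite trajectoire} uses exactly this linear-in-$x$ correction of the interpolation \eqref{eq:interpolation}, followed by the tracking control of Proposition~\ref{pro:total controllability avec bord}, and the conditions of Definition~\ref{def:compatibility condition} do include the first-order corner condition (it is necessary for a $C^{2,2}_G$ solution, so your cutoff fallback could not have worked, but it is not needed). Your formula is in fact slightly better than the paper's: the constant matrices $H(0)^{-1}$, $H(1)^{-1}$, rather than the paper's $H(x)^{-1}$, are what make the correction vanish identically at $t=0,T$, and your sign $b_0=\xi_0(0)-H(0)^{-1}F_0$ is the one consistent with \eqref{eq:control system bound 1}, so the appendix's conditions should be read with this sign at $x=0$.
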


The proof of Proposition~\ref{pro:total controllability}, which is essentially similar to the proof of Proposition~\ref{pro:total controllability avec bord},
is given in Appendix~\ref{def:compatibility condition}.

\begin{remark}
In Proposition~\ref{pro:total controllability}, the control $u_\mathrm{d}$ is designed to track a trajectory $g_r$. In particular, this choice of control ensures the existence of a solution to~\eqref{eq:control system weak form}, thereby avoiding the need to rely on a general existence result.
\end{remark}


We next provide another controllability result, this time where the actuation takes the form of an induced internal force.
\begin{corollary}\label{coro : control internal force}
Assume that the Lie group $G$ is path-connected.
Let $T>0$, $\mathcal{F}\in \mathscr{C}^0([0,T]\times[0,1];\mathfrak{g})$ and $F_0,F_1\in \mathscr{C}^2([0,T];\mathfrak{g})$.
Assuming the initial and final conditions $(g^0,\eta^0)$ and $(g^1,\eta^1)$ belong to $\mathcal{X}_1$ and satisfy the compatibility conditions with $u_0=u_1=0$, then there exist an internal force $F_{\mathrm{int}}$ induced by a function $\Lambda\in C^{0,1}_{\mathfrak{g}}$ and $\tilde{u}_1\in \mathscr{C}^0([0,T];\mathfrak{g})$ such that
     $$u_{\mathrm{d}}=\mathcal{F}_{\mathrm{\mathrm{int}}}=\partial_x \Lambda_{\mathrm{int}} - \operatorname{ad}^T_\xi \Lambda_{\mathrm{int}}$$
     and
     $$u_0(t)= \Lambda(t,0), \quad u_1(t) = -\Lambda(t,1) + \tilde{u}_1(t), \quad \forall t\in[0,T]$$
     steer the control system~\eqref{eq:control system} from $(g^0,\eta^0)$ to $(g^1,\eta^1)$.
\end{corollary}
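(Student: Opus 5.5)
The plan is to reduce the statement to Proposition~\ref{pro:total controllability}, which already provides, under the same hypotheses (path-connected $G$, compatible data in $\mathcal{X}_1$, $u_0=u_1=0$), a distributed control $u_{\mathrm{d}}\in\mathscr{C}^0([0,T]\times[0,1];\mathfrak{g})$ and a trajectory $g=g_r\in C^{2,2}_G$ steering $(g^0,\eta^0)$ to $(g^1,\eta^1)$. The only task is to show that this $u_{\mathrm{d}}$ can be realized as the distributed part of an internal force in the sense of Remark~\ref{rem:internal force}, i.e. as $\partial_x\Lambda_{\mathrm{int}}-\operatorname{ad}^T_{\xi}\Lambda_{\mathrm{int}}$ with $\xi=\xi_{g_r}$. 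The boundary parts of that internal force are then put into $u_0,u_1$, and the leftover mismatch is absorbed by a correction $\tilde u_1$ at $x=1$.

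First, I fix the reference movement $g_r$ and its twist $\xi:=\xi_{g_r}\in C^{2,1}_{\mathfrak{g}}$, together with the control $u_{\mathrm{d}}$ from Proposition~\ref{pro:total controllability}. For each fixed $t$, I solve the linear ODE in $x$
\begin{equation*}
\partial_x\Lambda_{\mathrm{int}}(t,x)=\operatorname{ad}^T_{\xi(t,x)}\Lambda_{\mathrm{int}}(t,x)+u_{\mathrm{d}}(t,x),\qquad \Lambda_{\mathrm{int}}(t,0)=0 .
\end{equation*}
The choice of initial condition at $x=0$ is free; choosing $0$ forces $u_0(t)=\Lambda(t,0)=0$, which keeps the left boundary condition already satisfied by $g_r$. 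Linearity and continuity of the coefficients in $(t,x)$, via continuous dependence on parameters through the resolvent $\Phi(t,x)$ of $\partial_x\Phi=\operatorname{ad}^T_{\xi}\Phi$, give a solution $\Lambda_{\mathrm{int}}\in C^{0,1}_{\mathfrak{g}}$. Explicitly,
\begin{equation*}
\Lambda_{\mathrm{int}}(t,x)=\Phi(t,x)\int_0^x\Phi(t,s)^{-1}u_{\mathrm{d}}(t,s)\,ds .
\end{equation*}

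Next, I insert the internal force into system \eqref{eq:control system}. With $u_{\mathrm{d}}=\mathcal{F}_{\mathrm{int}}$, equation \eqref{eq:control system dyn} along $g_r$ is unchanged. The boundary conditions \eqref{eq:control system bound 1}–\eqref{eq:control system bound 2} become, with the induced punctual parts, $\Lambda(t,0)=-F_0(t)-\Lambda_{\mathrm{int}}(t,0)$ and $\Lambda(t,1)=F_1(t)-\Lambda_{\mathrm{int}}(t,1)+\tilde u_1(t)$. At $x=0$ this reduces to the original condition $\Lambda(t,0)=-F_0(t)$, which holds for $g_r$ by construction in Proposition~\ref{pro:total controllability}. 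At $x=1$ I define $\tilde u_1(t):=\Lambda_{\mathrm{int}}(t,1)$, which is continuous in $t$, so the right boundary reduces to $\Lambda(t,1)=F_1(t)$, again satisfied by $g_r$. Hence $g_r$ solves the controlled system with these inputs, and it attains the prescribed initial and final states.

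The main obstacle is the bookkeeping of signs and conventions between Remark~\ref{rem:internal force} (where $F_0=\Lambda_{\mathrm{int}}(t,0)$ and $F_1=-\Lambda_{\mathrm{int}}(t,1)$, together with the extra term $D_\xi\Lambda_{\mathrm{int}}\cdot\partial_x\xi$ when $\Lambda_{\mathrm{int}}$ depends on $\xi$) and the corollary's formulas. I would avoid this by treating $\Lambda_{\mathrm{int}}$ as a given function of $(t,x)$ evaluated along $g_r$, so that $\partial_x$ is a total derivative. A secondary point is regularity: $\Lambda_{\mathrm{int}}$ is only $C^{0,1}$, since $u_{\mathrm{d}}$ is merely continuous. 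This suffices because only $\partial_x\Lambda_{\mathrm{int}}$ and the traces at $x=0,1$ enter.
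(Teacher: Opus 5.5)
Your proposal is correct and follows essentially the same route as the paper's proof. You take the trajectory $g_r$ and distributed control from Proposition~\ref{pro:total controllability}, solve the linear ODE $\partial_x\Lambda_{\mathrm{int}}=\operatorname{ad}^T_{\xi_r}\Lambda_{\mathrm{int}}+u$ with $\Lambda_{\mathrm{int}}(t,0)=0$, and set $\tilde u_1:=\Lambda_{\mathrm{int}}(\cdot,1)$, so that $g_r$ remains a solution; your variation-of-constants formula and your check of the sign conventions of Remark~\ref{rem:internal force} add detail without changing the argument.
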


\begin{proof}[Proof of Corollary~\ref{coro : control internal force}]
 Let $\mathcal{F}\in\mathscr{C}^0([0,T]\times[0,1];\mathfrak{g})$, $F_0,F_1\in \mathscr{C}^2([0,T];\mathfrak{g})$, $(g^0,\eta^0)$ and $(g^1,\eta^1)$ in $\mathcal{X}_1$ satisfy the compatibility conditions with $u_0=u_1=0$. Take a control $u\in \mathscr{C}^0([0,T]\times [0,1]; \mathfrak{g})$ which steers the control system~\eqref{eq:control system} with $u_0=u_1=0$ from $(g^0,\eta^0)$ to $(g^1,\eta^1)$
 and whose existence is guaranteed by Proposition \ref{pro:total controllability}.
 Denote by $g_r$ the corresponding solution of equations~\eqref{eq:control system} and write $\xi_r=g_r^{-1}\partial_x g_r$ for its space twist.
 Let $\Lambda_{\mathrm{int}}\in C^{0,1}_{G}$ be the solution of
 \begin{equation}
     \begin{cases}
         \partial_x \Lambda_{\mathrm{int}}(t,x) = \operatorname{ad}^T_{\xi_r}\Lambda_{\mathrm{int}}(t,x) + u(t,x),\\
         \Lambda_{\mathrm{int}}(t,0)=0,
     \end{cases}
 \end{equation}
 and introduce the functions $u_{\mathrm{d}}:=\partial_x\Lambda_{\mathrm{int}} - \operatorname{ad}^T_{\xi_r}\Lambda_{\mathrm{int}}\in \mathscr{C}^0([0,T]\times [0,1];\mathfrak{g})$
 and $\tilde{u}_1:=\Lambda(\cdot,1)\in \mathscr{C}^0([0,T];\mathfrak{g})$.
 Then one deduces that the control $u_\mathrm{d}$ combined with the boundary controls
 $$
 u_0(t):=\Lambda(t,0), \quad u_1(t):=-\Lambda(t,1)+\tilde{u}_1(t),
 $$
 will still steer the control system~\eqref{eq:control system} from $(g^0,\eta^0)$ to $(g^1,\eta^1)$ since $g_r$ remains the solution of equations~\eqref{eq:control system}-\eqref{eq:control system initial condition}.
\end{proof}

\begin{remark}
Proposition~\ref{pro:total controllability} shows that the solution can be controlled using a distributed force $u_{\mathrm{d}}$ only.
This result is not, however, useful in practice for the robot locomotion as there is no practical way to generate an arbitrary distributed force.
On the other hand, Corollary~\ref{coro : control internal force} proves that the control can be interpreted as an internal force combined with a boundary force at one end. Since internal forces model shape deformations, this implies that a shape-controlled robot with a single boundary actuator is controllable.
\end{remark}

\section{Appendix}

\subsection{Compatibility conditions}\label{section:compatibility condition}

\begin{definition}\label{def:compatibility condition}   Let $T>0$, $\mathcal{F}\in \mathscr{C}^0([0,T]\times[0,1];\mathfrak{se}(3))$, $F_0,F_1\in \mathscr{C}^2([0,T];\mathfrak{se}(3))$ and $u_0,u_1\in \mathscr{C}^2([0,T];\mathfrak{se}(3))$.
    We say that an initial condition $(g^0,\eta^0)$ and a final condition $(g^1,\eta^1)$ in $\mathcal{X}_1$ satisfy the \emph{compatibility conditions at the boundary $x_0=0$ and $x_1=1$} if for $i\in \{0,1\}$
   \begin{equation}
   \label{eq:ini compatibility condition}
        \xi_{g^0}(x_i)=\xi_0(x_i) + H(x_i)^{-1}(F_i(0) + u_i(0)), \quad  (\xi_{g^0})_t(x_i)= H(x_i)^{-1} (F_i'(0) + u_i'(0)),
    \end{equation}
    and
   \begin{equation}
      \label{eq:fin compatibility condition}
        \xi_{g^1}(x_i)=\xi_0(x_i) + H(x_i)^{-1}(F_i(T) + u_i(T)), \quad  (\xi_{g^1})_t(x_i)= H(x_i)^{-1} (F_i'(T) + u_i'(T)),
    \end{equation}
    where $H : [0,1] \rightarrow \mathscr{L}(\mathfrak{se}(3))$ is the Hookian matrix defined in \eqref{def:potential energy}, $\xi_0\in \mathscr{C}^2([0,1];\mathfrak{se}(3))$ is the space twist at rest,
    and
\[
\xi_{g^0}(x):=g^0(x)^{-1} \partial_x g^0(x),
\quad
(\xi_{g^0})_t(x):= \partial_x\eta^0(x) + [\xi_{g^0}(x), \eta^0(x)], \\
\xi_{g^1}(x):=g^1(x)^{-1} \partial_x g^1(x),
\quad
(\xi_{g^1})_t(x):= \partial_x\eta^1(x) + [\xi_{g^1}(x), \eta^1(x)],
\]
for $x\in [0,1]$.
    \end{definition}

The proof of Proposition~\ref{pro:total controllability} relies on the following lemma that shows the existence of a movement $g_r\in \mathscr{C}^2([0, T]\times[0,1]; G)$ satisfying the equations~\eqref{eq:control system bound 1};\eqref{eq:control system bound 2};\eqref{eq:control system initial condition}.

\begin{lemma}\label{lemma:construction explicite trajectoire}
   Let $T>0$, $\mathcal{F}\in \mathscr{C}^0([0,T]\times[0,1];\mathfrak{se}(3))$, $F_0,F_1\in \mathscr{C}^2([0,T];\mathfrak{se}(3))$, an initial condition $(g^0,\eta^0)$  and a final conditions $(g^1,\eta^1)$ both in $\mathcal{X}_1$ satisfying respectively the initial and final compatibility conditions of Definition~\ref{def:compatibility condition} with $u_0=u_1=0$. Then there exists a movement $g_r \in C^{2,2}_G$ such that \begin{equation}\label{eq:construction ini and fin cond}
       (g_r(0,\cdot),\eta_r(0,\cdot)) = (g^0,\eta^0), \quad      (g_r(T,\cdot),\eta_r(T,\cdot)) = (g^1,\eta^1), \end{equation}
 with $\eta_r= g_r^{-1}\partial_tg_r $
and \begin{equation}\label{eq:contruction bound}
    \xi_r(\cdot,0)=\xi_0(1) + H(0)^{-1}F_0, \quad \xi_r(\cdot,1)=\xi_0(0) + H(1)^{-1}F_1,
\end{equation}
where $\xi_r=g_r^{-1}\partial_xg_r $ and $\xi_0$ is the reference space twist~\eqref{def:ST0}.
\end{lemma}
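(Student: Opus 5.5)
We follow the construction of Lemma~\ref{lemma:construction explicite trajectoire avec bord}: first build the base curve $\gamma(t)=g_r(t,0)$, then build a space twist $\xi_r\in C^{2,1}_{\mathfrak{g}}$ with the prescribed Hermite data at $t=0$ and $t=T$, and finally reconstruct $g_r$ from $\partial_x g_r=g_r\xi_r$, $g_r(t,0)=\gamma(t)$. The new requirement is the boundary condition \eqref{eq:contruction bound}, which must hold for \emph{all} $t\in[0,T]$ and not only at $t\in\{0,T\}$. We read it as $\xi_r(t,x_i)=\xi_0(x_i)+H(x_i)^{-1}F_i(t)$ for $i\in\{0,1\}$, consistent with Definition~\ref{def:compatibility condition}.

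\textbf{Step 1: the base curve.} Since $G$ is path-connected, we choose, exactly as before, $\gamma\in\mathscr{C}^2([0,T];G)$ with $\gamma(0)=g^0(0)$, $\gamma'(0)=g^0(0)\eta^0(0)$, $\gamma(T)=g^1(0)$ and $\gamma'(T)=g^1(0)\eta^1(0)$. For example, join the two points by a smooth path, then correct the endpoint velocities by right multiplication with $\exp$ of small Hermite-type polynomial corrections in $\mathfrak{g}$.

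\textbf{Step 2: the space twist.} Let $\xi_{\mathrm{int}}(t,x):=\xi^0(x)P_1(t)+\xi^0_t(x)P_2(t)+\xi^1(x)P_3(t)+\xi^1_t(x)P_4(t)$ be the interpolant \eqref{eq:interpolation}. Let $b_i(t):=\xi_0(x_i)+H(x_i)^{-1}F_i(t)$ be the prescribed boundary values, which are $\mathscr{C}^2$ in $t$. Take cutoffs $\chi_0,\chi_1\in\mathscr{C}^\infty([0,1])$ with $\chi_i(x_j)=\delta_{ij}$, and set
\[
\xi_r(t,x):=\xi_{\mathrm{int}}(t,x)+\sum_{i=0,1}\chi_i(x)\big(b_i(t)-\xi_{\mathrm{int}}(t,x_i)\big).
\]
Then $\xi_r(t,x_i)=b_i(t)$ for all $t$. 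The key point is that the correction vanishes together with its first time derivative at $t=0$ and $t=T$. Indeed, $\xi_{\mathrm{int}}(0,x_i)=\xi^0(x_i)=b_i(0)$ and $\partial_t\xi_{\mathrm{int}}(0,x_i)=\xi^0_t(x_i)=H(x_i)^{-1}F_i'(0)=b_i'(0)$ by the initial compatibility conditions \eqref{eq:ini compatibility condition} with $u_i=0$. The final conditions \eqref{eq:fin compatibility condition} give the same at $t=T$. Hence $\xi_r$ retains the Hermite data \eqref{eq:condtion construction initiale et finale avec bord}, and it has regularity $C^{2,1}_{\mathfrak{g}}$ because $F_i\in\mathscr{C}^2$.

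\textbf{Step 3: reconstruction.} Define $g_r$ by solving $\partial_x g_r=g_r\xi_r$, $g_r(t,0)=\gamma(t)$. The rest of the argument is verbatim that of Lemma~\ref{lemma:construction explicite trajectoire avec bord}. Uniqueness for the linear ODE in $x$ gives $g_r(0,\cdot)=g^0$ and $g_r(T,\cdot)=g^1$. The structure equation then shows that $\eta_{g_r}(0,\cdot)$ and $\eta^0$ solve the same linear ODE $\partial_x\eta=\xi^0_t-[\xi^0,\eta]$ with the same value at $x=0$, namely $\gamma(0)^{-1}\gamma'(0)=\eta^0(0)$; the same holds at $t=T$.

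\textbf{Main obstacle.} The delicate point is regularity: we must check that $g_r\in C^{2,2}_G$, i.e.\ that two time derivatives of the $x$-ODE solution exist and are continuous. This requires $\xi_r\in\mathscr{C}^2$ in $t$ (with values in $C^1$ in $x$) and $\gamma\in\mathscr{C}^2$. It follows from differentiable dependence of linear ODE solutions on parameters; we would invoke the identification $C^{2,2}_G\simeq D^{2,2}_G$ from Remark~\ref{rem:imbrication-Versus-2Var} for this.
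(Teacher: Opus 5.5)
Your proof is correct and takes essentially the paper's route. The paper also corrects the Hermite interpolant by boundary cutoffs, with $\chi_0=1-x$ and $\chi_1=x$, and otherwise repeats the proof of Lemma~\ref{lemma:construction explicite trajectoire avec bord}. One detail is in your favor: you freeze the stiffness at the endpoints in $b_i(t)=\xi_0(x_i)+H(x_i)^{-1}F_i(t)$, and this is exactly what makes the correction and its time derivative vanish at $t=0,T$ via the compatibility conditions. The paper's formula writes $H(x)^{-1}$ instead, which leaves a residual $(1-x)\big(H(x)^{-1}-H(0)^{-1}\big)F_0(0)+x\big(H(x)^{-1}-H(1)^{-1}\big)F_1(0)$ in $\xi_r(0,\cdot)$ unless $H$ is constant.
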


\begin{proof}[Proof of Lemma~\ref{lemma:construction explicite trajectoire}]\label{proof:control total}
The proof of this lemma is entirely similar to that of Lemma~\ref{lemma:construction explicite trajectoire avec bord} with one single modification: instead of choosing $\xi_r$ given in \eqref{eq:interpolation}, one chooses it as
\begin{multline*}
\xi_r(t,x):= f_r(t,x) +(1-x)\big(H(x)^{-1}F_0(t)+\xi_0(0) - f_r(t,0)\big) + x\big( H(x)^{-1}F_1(t) + \xi_0(1) - f_r(t,1)\big).
\end{multline*}
where, for all $(t,x)\times [0,T]\times [0,1]$, $f_r(t,x):=\xi^0(x)P_{1}(t) + \xi^0_t(x)P_{2}(t) +  \xi^1(x)P_{3}(t) + \xi^1_t(x) P_{4}(t).$
Indeed, by direct computations, using the compatibility conditions at $x=0$ and $x=1$, it follows that $\xi_r$ satisfies \eqref{eq:construction ini and fin cond} and  \eqref{eq:contruction bound} .
 \end{proof}

Using Lemma~\ref{lemma:construction explicite trajectoire},
one can prove Proposition~\ref{pro:total controllability}
in exactly the same way as Proposition~\ref{pro:total controllability avec bord}
was shown.

\subsection{Proof of Theorem \ref{least action thm}}\label{app:least action thm}

Below, for compact intervals $I,J,K\subset\R$ and a set $S\subset\R^d$, we shall denote
\[
D^{k,l,m}(I\times J\times K;S):=\{f:I\times J\times K\to \R^d\ |\ {}& \textrm{partial derivatives}\ (h,t,x)\mapsto \partial_h^\alpha \partial_t^\beta \partial_x^\gamma f(h,t,x)
\ \textrm{exist and} \\
{}& \textrm{are continuous on $I\times J\times K$ for all $\alpha\leq k,\ \beta\leq l,\ \gamma\leq m$} \\
{}& \textrm{and $f(I\times J\times K)\subset S$}\}
\]
Theorem \ref{th:D^{k,l}:1} obviously generalizes to yield the
normed space isomorphisms
\begin{align}\label{eq:D^{k,l,m}:1}
D^{k,l,m}(I\times J\times K;S)\cong{}& \mathscr{C}^k(I,D^{l,m}(J\times K;S))
\cong \mathscr{C}^k(I,D^{l,m}(J\times K;S))
\cong \mathscr{C}^k(I,\mathscr{C}^l(J,\mathscr{C}^m(K;S)))
\end{align}
and so on.

\begin{lemma}\label{le:variation:1}
For any $g\in C^{2,2}_G$ and $v\in C^{2,2}_{\mathfrak{g}}$,
there exists $\wt{g}\in \mathscr{C}^2([-1,1];C^{2,2}_G)$
such that
\begin{align}\label{eq:le:variation:1:result:1}
\wt{g}(0)=g
\quad\textrm{and}\quad
\wt{g}'(0)=g v.
\end{align}
\end{lemma}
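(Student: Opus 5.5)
The natural candidate is the exponential curve
\[
\wt{g}(h)(t,x):=g(t,x)\exp\big(h\,v(t,x)\big),\qquad h\in[-1,1],\ (t,x)\in[0,T]\times[0,1],
\]
where $\exp:\mathfrak{g}\to G$ is the matrix exponential, which maps $\mathfrak{g}$ into $G$ for a matrix Lie group. Then $\wt{g}(h)(t,x)\in G$ for all arguments, $\wt{g}(0)=g$, and formally $\partial_h\wt{g}(h)|_{h=0}=g v$. What remains is to check the regularity claim, $\wt{g}\in\mathscr{C}^2([-1,1];C^{2,2}_G)$ (Fréchet sense), and that the $h$-derivative at $0$, taken in the norm of $C^{2,2}$, equals $gv$.

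For the regularity I would use the identification \eqref{eq:D^{k,l,m}:1}: it suffices to show that the map $(h,t,x)\mapsto g(t,x)\exp(hv(t,x))$ belongs to $D^{2,2,2}([-1,1]\times[0,T]\times[0,1];G)$. Indeed, $D^{2,2,2}\cong\mathscr{C}^2([-1,1],D^{2,2}([0,T]\times[0,1];G))$ and $D^{2,2}\cong C^{2,2}_G$ by Remark~\ref{rem:imbrication-Versus-2Var} and Theorem~\ref{th:D^{k,l}:1}. Since $\exp$ is real-analytic on $M_{\R}(k)$, the map $(h,t,x)\mapsto\exp(hv(t,x))$ has every mixed partial derivative $\partial_h^\alpha\partial_t^\beta\partial_x^\gamma$ with $\beta,\gamma\le 2$ that $v$ itself admits, by the chain rule. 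Here $v\in C^{2,2}_{\mathfrak{g}}\cong D^{2,2}$, and the dependence on $h$ is polynomial inside an analytic function, so any number of $h$-derivatives are continuous. Matrix multiplication by $g\in D^{2,2}$ is bilinear and continuous, so by the Leibniz rule the product stays in $D^{2,2,2}$.

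Once $\wt{g}\in\mathscr{C}^2([-1,1];C^{2,2}_G)$, the derivative $\wt{g}'(0)$ in $C^{2,2}$ coincides with the pointwise derivative $\partial_h\big(g\exp(hv)\big)|_{h=0}=gv$. This is because evaluation at $(t,x)$ is a continuous linear functional on $C^{2,2}$ (equivalently, it follows from the isomorphism in \eqref{eq:D^{k,l,m}:1}). This gives \eqref{eq:le:variation:1:result:1}.

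The main obstacle is the bookkeeping in the second step. One must verify that the mixed partial derivatives involving $\partial_t\partial_x$, of total order up to four in $(t,x)$, of $\exp(hv)$ exist and are continuous given only the mixed regularity of $v$ encoded in $D^{2,2}$. I would handle this with the Faà di Bruno formula: each term is a continuous multilinear expression, through derivatives of $\exp$, in derivatives $\partial_t^{\beta_i}\partial_x^{\gamma_i}v$ with $\sum\beta_i\le 2$ and $\sum\gamma_i\le2$. Every such factor exists and is continuous by the definition of $D^{2,2}$, and this settles the step.
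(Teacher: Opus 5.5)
Your proposal is correct and matches the paper's proof step for step. The paper also takes $\wt{g}(h,t,x)=g(t,x)\exp(hv(t,x))$, asserts $\wt{g}\in D^{2,2,2}([-1,1]\times[0,T]\times[0,1];G)\cong\mathscr{C}^2([-1,1];C^{2,2}_G)$, and identifies $\wt{g}'(0)$ with the pointwise derivative $gv$ through that isomorphism; your Fa\`a di Bruno bookkeeping (factors $\partial_t^{\beta_i}\partial_x^{\gamma_i}v$ with $\sum\beta_i\le 2$, $\sum\gamma_i\le 2$) just spells out the regularity step the paper dismisses with ``clearly''.
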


\begin{proof}
Let $g\in C^{2,2}_G$ and $v\in C^{2,2}_{\mathfrak{g}}$.
By Theorem \ref{th:D^{k,l}:1},
we have $g\in D^{2,2}([0,T]\times [0,1];G)$
and $v\in D^{2,2}([0,T]\times [0,1];\mathfrak{g})$

Define
\[
X:=[-1,1]\times [0,T]\times [0,1]
\]
and
\[
\wt{g}(h,t,x):=g(t,x)\exp(hv(t,x)),
\quad (h,t,x)\in X,
\]
where $\exp$ is the usual matrix exponential map.
Note that since we are working in a matrix Lie group $G$,
this exponential map coincides with the exponential map $\exp_G$ of the Lie group $G$.

Clearly $\wt{g}\in D^{2,2,2}(X;G)$
and hence $\wt{g}\in \mathscr{C}^2([-1,1];C^{2,2}_G)$
similarly as in Theorem \ref{th:D^{k,l}:1}
(see \eqref{eq:D^{k,l,m}:1}).
Note that
\[
\wt{g}(0,t,x)=g(t,x)
\quad\forall (t,x)\in [0,T]\times [0,1]
\]
and
\[
\partial_h \wt{g}(h,t,x)|_{h=0}=g(t,x)v(t,x)
\quad\forall (t,x)\in [0,T]\times [0,1].
\]

For clarity, let us write $f^{\wt{g}}$ for $\wt{g}$
as an element of $\mathscr{C}^2([-1,1];C^{2,2}_G)$
i.e. $f^{\wt{g}}(h)(t,x)=\wt{g}(h,t,x)$ for all $(h,t,x)\in X$.
By the above $f^{\wt{g}}(0)=\wt{g}(0,t,x)=g$.
On the other hand, similarly to \eqref{eq:th:D^{k,l}:1:result:2},
it holds
\[
\partial_h (f^{\wt{g}}(h))(t,x)=\partial_h \wt{g}(h,t,x)
\quad \forall (h,t,x)\in X
\]
after identifying on the left-hand side the space
$C^{2,2}_G$ with $D^{2,2}([0,T]\times [-1,1])$
using Theorem \ref{th:D^{k,l}:1}.

Finally, at $h=0$, this yields
\[
\partial_h (f^{\wt{g}}(h))|_{h=0}(t,x)
=g(t,x)v(t,x)
\]
by the above i.e.
\[
(f^{\wt{g}})'(0)=g v.
\]
This completes the proof.
Thus \eqref{eq:le:variation:1:result:1} holds
up to identifying $f^{\wt{g}}$ with $\wt{g}$.
\end{proof}

By Assumption \ref{Ass : lagrangian}, the action $S$ is of the form
\[
S(g)=\int_0^T L(t,g(t,\cdot),\eta_g(t,\cdot)) dt
=\int_{[0,T]\times [0,1]} \mc{L}(t,g(t,x),\xi_g(t,x),\eta_g(t,x))\, d(t,x),
\quad g\in C^{2,2}_G
\]
with $\mc{L}\in \mathscr{C}^1([0,T]\times G\times\mathfrak{g}^2;\R)$.

Suppose that $\wt{g}\in \mathscr{C}^2([-1,1];C^{2,2}_G)$
and note that $\mathscr{C}^2([-1,1]; C^{2,2}_G)\cong D^{2,2,2}([-1,1]\times [0,T]\times [0,1];G)$ similarly as in Theorem \ref{th:D^{k,l}:1}
(see \eqref{eq:D^{k,l,m}:1}).
Let us denote
\[
X:=[-1,1]\times [0,T]\times [0,1].
\]
In particular $\wt{g}\in \mathscr{C}^2(X;G)$
and we may write $\wt{g}(h,t,x)$ for $\wt{g}(h)(t)(x)$.
Similarly, as
$$
(h\mapsto \xi_{\wt{g}(h)})\in \mathscr{C}^2([-1,1];C^{2,1}_{\mathfrak{g}})\cong D^{2,2,1}(X;\mathfrak{g}),\quad
(h\mapsto \eta_{\wt{g}(h)})\in \mathscr{C}^2([-1,1];C^{1,2}_{\mathfrak{g}})\cong D^{2,1,2}(X;\mathfrak{g}),
$$
we may write $\xi_{\wt{g}}(h,t,x)$ for $\xi_{h(h)}(t,x)$
and $\eta_{\wt{g}}(h,t,x)$ for $\eta_{\wt{g}(h)}(t,x)$.

Let
\begin{align}\label{eq:v_{wt{g}}:1}
v_{\wt{g}}(h,t,x):=\wt{g}(h,t,x)^{-1}\partial_h \wt{g}(h,t,x)\in\mathfrak{g}
\end{align}
and note that (see again Theorem \ref{th:D^{k,l}:1})
\[
v_{\wt{g}}\in \mathscr{C}^1([-1,1]; C^{2,2}_{\mathfrak{g}})\cong D^{1,2,2}(X;\mathfrak{g}).
\]

Since $\wt{g}\in D^{2,2,2}(X;G)\subset \mathscr{C}^2(X;G)$,
we have
$\partial_h \partial_x \wt{g}(h,t,x)=\partial_x \partial_h \wt{g}(h,t,x)$
for all $(h,t,x)\in X$
and hence
\[
\wt{g}(h,t,x)^{-1}\partial_h \partial_x \wt{g}(h,t,x)
={}& \wt{g}(h,t,x)^{-1}\partial_x \partial_h \wt{g}(h,t,x)
=\wt{g}(h,t,x)^{-1}\partial_x (\wt{g}(h,t,x) v_{\wt{g}}(h,t,x)) \\
={}& \xi_{\wt{g}}(h,t,x)v_{\wt{g}}(h,t,x)+\partial_x v_{\wt{g}}(h,t,x).
\]
Therefore
\[
\partial_h \xi_{\wt{g}}(h,t,x)
={}& \partial_{h} (\wt{g}(h,t,x)^{-1}\partial_x \wt{g}(h,t,x)) \\
={}& -\wt{g}(h,t,x)^{-1} (\partial_{h} \wt{g}(h,t,x)) \wt{g}(h,t,x)^{-1} \partial_x \wt{g}(h,t,x)
+\wt{g}(h,t,x)^{-1} \partial_h \partial_x \wt{g}(h,t,x)) \\
={}& -v_{\wt{g}}(h,t,x) \xi_{\wt{g}}(h,t,x)
+\xi_{\wt{g}}(h,t,x)v_{\wt{g}}(h,t,x)+\partial_x v_{\wt{g}}(h,t,x)
\]
i.e.
\[
\partial_h \xi_{\wt{g}}(h,t,x)-\partial_x v_{\wt{g}}(h,t,x)
=[\xi_{\wt{g}}(h,t,x),v_{\wt{g}}(h,t,x)]_{\mathfrak{g}}.
\]
Likewise
\[
\partial_h \eta_{\wt{g}}(h,t,x)-\partial_t v_{\wt{g}}(h,t,x)
=[\eta_{\wt{g}}(h,t,x),v_{\wt{g}}(h,t,x)]_{\mathfrak{g}}.
\]

Write
\[
\mc{L}_{\wt{g}}(h,t,x):=\mc{L}(t,x,\wt{g}(h,t,x), \xi_{\wt{g}}(h,t,x), \eta_{\wt{g}}(h,t,x))
\]
and
\[
(\mc{L}_{\wt{g}})_k(h,t,x):=\mc{L}_k(t,x,{\wt{g}}(h,t,x), \xi_{\wt{g}}(h,t,x), \eta_{\wt{g}}(h,t,x)),
\quad k=1,2,3,4,5.
\]
For simplicity, let us write $\la\cdot,\cdot\ra=\la\cdot,\cdot\ra_{\crg}$
for the dual-coupling between $\mathfrak{g}^*$ and $\mathfrak{g}$.
Then we have, suppressing also the argument $(h,t,x)$ for better readability,
\[
\partial_h \mc{L}_{\wt{g}}(h,t,x)
={}& \big(\mc{L}_{\wt{g}}\big)_3 (\wt{g} v_{\wt{g}})+\la (\mc{L}_{\wt{g}})_4, \partial_h \xi_{\wt{g}}\ra+\la (\mc{L}_{\wt{g}})_5, \partial_h \eta_{\wt{g}}\ra \\
={}& \la (\wt{g}^*(\mc{L}_{\wt{g}})_3), v_{\wt{g}}\ra
+\la (\mc{L}_{\wt{g}})_4, \partial_x v_{\wt{g}}
+\ad_{\xi_{\wt{g}}} v_{\wt{g}}\ra
+\la (\mc{L}_{\wt{g}})_5, \partial_t v_{\wt{g}}
+\ad_{\eta_{\wt{g}}} v_{\wt{g}}\ra.
\]
From the assumption $\mc{L}\in \mathscr{C}^1([0,T]\times G\times\mathfrak{g}^2;\R)$
and the fact that $\wt{g},\xi_{\wt{g}}$ and $\eta_{\wt{g}}$ are continuous on $X$,
the functions
$(\mc{L}_{\wt{g}})_k$, $k=3,4,5$,
are also continuous on $X$.
Then because $v_{\wt{g}}$, $\partial_x v_{\wt{g}}$ and $\partial_t v_{\wt{g}}$ are continuous on $X$,
we conclude that $\partial_h \mc{L}_{\wt{g}}$ is continuous on $X$.

In particular, the derivative $\partial_h \mc{L}_{\wt{g}}:X\to\R$ is bounded on the compact set $X$
i.e. there exists a constant $M>0$ s.t. $|\partial_h \mc{L}_{\wt{g}}|\leq M$ on $X$.
Since $[0,1]\times [0,T]$ has finite Lebesgue measure,
the constant function $M$ is $L^1$-integrable on it.
Hence it follows that (e.g. by DCT)
\[
\dif{h} S(\wt{g}(h))=\pa{h} \int_{[0,T]\times [0,1]} \mc{L}_{\wt{g}}(h,t,x)\, d(t,x)
=\int_{[0,T]\times [0,1]} \partial_h \mc{L}_{\wt{g}}(h,t,x)\, d(t,x)
\]
for every $s\in [-1,1]$.

Integration by parts yields
\begin{multline*}
\int_{[0,T]\times [0,1]} \la (\mc{L}_{\wt{g}})_4(h,t,x), \partial_x v_{\wt{g}}(h,t,x)\ra \, d(t,x) \\
=\int_0^T \la(\mc{L}_{\wt{g}})_4(h,t,x), v_{\wt{g}}(h,t,x)\ra\big|_{x=0}^{x=1}\, dt
-\int_{[0,T]\times [0,1]} \la \partial_x (\mc{L}_{\wt{g}})_4(h,t,x), v_{\wt{g}}(h,t,x)\ra d(t,x)
\end{multline*}
and similarly
\begin{multline*}
\int_{[0,T]\times [0,1]} \la (\mc{L}_{\wt{g}})_5(h,t,x),\partial_t v_{\wt{g}}(h,t,x)\ra \, d(t,x) \\
=\int_0^1 \la (\mc{L}_{\wt{g}})_5(h,t,x), v_{\wt{g}}(h,t,x)\ra\big|_{t=0}^{t=T}\, dx
-\int_{[0,T]\times [0,1]} \la\partial_t (\mc{L}_{\wt{g}})_5(h,t,x), v_{\wt{g}}(h,t,x)\ra d(t,x).
\end{multline*}
Therefore, introducing
$\mc{E}_{\wt{g}}:X\to\mathfrak{g}^*$ by
\[
\mc{E}_{\wt{g}}:={}& -\wt{g}^*(\mc{L}_{\wt{g}})_3+\partial_x (\mc{L}_{\wt{g}})_4-(\mc{L}_{\wt{g}})_4\circ \ad_{\xi_{\wt{g}}}+\partial_t (\mc{L}_{\wt{g}})_5-(\mc{L}_{\wt{g}})_5\circ \ad_{\eta_{\wt{g}}} \\
={}& -\wt{g}^*(\mc{L}_{\wt{g}})_3+\partial_x (\mc{L}_{\wt{g}})_4-\ad_{\xi_{\wt{g}}}^*(\mc{L}_{\wt{g}})_4+\partial_t (\mc{L}_{\wt{g}})_5-\ad_{\eta_{\wt{g}}}^* (\mc{L}_{\wt{g}})_5,
\]
we have
\begin{align}\label{eq:diff_of_S:1}
\dif{h} S(\wt{g}(h))
={}& -\int_{[0,T]\times [0,1]} \la\mc{E}_{\wt{g}}(h,t,x), v_{\wt{g}}(h,t,x)\ra\, d(t,x) \\
{}& +\int_0^T \la(\mc{L}_{\wt{g}})_4(h,t,x), v_{\wt{g}}(h,t,x)\ra\big|_{x=0}^{x=1}\, dt
+\int_0^1 \la(\mc{L}_{\wt{g}})_5(h,t,x), v_{\wt{g}}(h,t,x)\ra\big|_{t=0}^{t=T}\, dx
\nonumber
\end{align}

For $g\in C^{2,2}_G$ and $(t,x)\in [0,T]\times [0,1]$, write
\[
j_g(t,x):=(g(t,x),\xi_g(t,x),\eta_g(t,x))
\]
and
\[
\mc{L}_g(t,x):=\mc{L}(t,x,j_g(t,x)),
\quad
(\mc{L}_g)_k(t,x):=\mc{L}_k(t,x,j_g(t,x)),
\quad 1\leq k\leq 5
\]
as well as
\begin{align}\label{eq:mc{E}_g:1}
\mc{E}_g:={}& -g^*(\mc{L}_g)_3+\partial_x (\mc{L}_g)_4-\ad_{\xi_g}^*(\mc{L}_g)_4+\partial_t (\mc{L}_g)_5-\ad_{\eta_g}^* (\mc{L}_g)_5.
\end{align}

Let now $g\in C^{2,2}_G$ and $v\in \mathscr{C}^2_0([0,T],C^2_{\mathfrak{g}})$ be arbitrary
and identify them as elements of $D^{2,2}([0,T]\times [0,1];G)$
and $\{W\in D^{2,2}([0,T]\times [0,1];\mathfrak{g})\ |\ W(0,\cdot)=W(T,\cdot)=0\}$,
respectively, using Theorem \ref{th:D^{k,l}:1}.
Then by Lemma \ref{le:variation:1},
there exists $\wt{g}\in \mathscr{C}^2([-1,1];C^{2,2}_G)$
such that $\wt{g}(0)=g$ and $\wt{g}'(0)=gv$.
Then $v_{\wt{g}}$ given by \eqref{eq:v_{wt{g}}:1}
satisfies
\[
v_{\wt{g}}(0,t,x)=\wt{g}(0)(t,x)^{-1}\,\wt{g}'(0)(t,x)
=g^{-1} (gv)=v
\]
and hence \eqref{eq:diff_of_S:1} at $h=0$ becomes
\[
\mc{D} S(g)(v)=\dif{h} S(\wt{g}(h))\big|_{h=0}
={}& -\int_{[0,T]\times [0,1]} \la\mc{E}_g(t,x), v(t,x)\ra\, d(t,x)
+\int_0^T \la (\mc{L}_g)_4(t,x), v(t,x)\ra\big|_{x=0}^{x=1}\, dt \\
{}& +\int_0^1 \la (\mc{L}_g)_5(t,x), v(t,x)\ra\big|_{t=0}^{t=T}\, dx
\]
which after recalling that $v(0,\cdot)=v(T,\cdot)=0$ becomes
\begin{align}\label{eq:diff_of_S:2}
\mc{D} S(g)(v)=-\int_{[0,T]\times [0,1]} \la\mc{E}_g(t,x), v(t,x)\ra\, d(t,x)
+\int_0^T \la (\mc{L}_g)_4(t,x),v(t,x)\ra\big|_{x=0}^{x=1}\, dt.
\end{align}

Since by Assumption \ref{Ass : force} the force $F$
is of the form \eqref{eq:Ass : force:1},
the first term $T_F(g)$ on the right-hand side of \eqref{eq:Hamilton equation} is
\begin{align}\label{eq:T_F(g):1}
T_F(g)={}& -\int_{[0,T]\times [0,1]} \la \mc{F}(t,x,j_g(t,x)), v(t,x)\ra d(t,x)
-\int_0^T \la F(t,x,j_g(t,x)), v(t,x)\ra\big|_{x=0}^{x=1} dt
\end{align}
with
\[
F(t,x,g,\eta,\xi):=\begin{cases}
-F_0(t,0,g,\xi,\eta) \\
\phantom{+}F_1(t,1,g,\xi,\eta)
\end{cases}
\]

At last, recalling Assumption \ref{Ass : constraint} on the constraint map $A$
and Assumption \ref{ass:injecting_lambda:1} on the Lagrange multiplier $\lambda$,
the last term $T_C(g)$ on the right-hand side of
\eqref{eq:Hamilton equation} has the form
\begin{align}\label{eq:T_C(g):1}
T_C(g)
={}& \int_0^T \la \lambda(t,\cdot), A(t,g(t,\cdot))v(t,\cdot)\ra_{(\mathscr{C}^1)^*,\mathscr{C}^1} dt \\
={}& \int_{[0,T]\times [0,1]} \big(\la \lambda(t,x),\mc{C}^1(t,x,j_g(t,x))v(t,x)+\mc{C}^2(t,x,j_g(t,x))\partial_x v(t,x)\ra\big) d(t,x) \nonumber \\
={}& \int_{[0,T]\times [0,1]} \la\mc{C}^1(t,x,j_g(t,x))^*\lambda(t,x))-\partial_x\big(\mc{C}^2(t,x,j_g(t,x))^*\lambda(t,x)\big),v(t,x)\ra d(t,x) \nonumber \\
{}& +\int_0^T \la \mc{C}^2(t,x,j_g(t,x))^*\lambda(t,x),v(t,x)\ra\big|_{x=0}^{x=1}\ dt.
\nonumber
\end{align}

We are ready to complete the proof.
Suppose that $g\in C^{2,2}_G$ satisfies
the weak form of Cosserat-Poincaré equation \eqref{eq:Hamilton equation}.
Writing
\[
\mc{P}_g:=\mc{E}_g-\mc{F}+(\mc{C}^1)^*\lambda-\partial_x\big((\mc{C}^2)^*\lambda\big).
\]
and using \eqref{eq:diff_of_S:2}, \eqref{eq:T_F(g):1} and \eqref{eq:T_C(g):1}, we find that
\begin{multline*}
-\int_{[0,T]\times [0,1]} \la\mc{P}_g(t,x,j_g(t,x)),v(t,x)\ra d(t,x)
+\int_0^T \la (\mc{L}_g)_4(t,x),v(t,x)\ra\big|_{x=0}^{x=1} dt \\
=\int_0^T \Big(-\la F(t,x,j_g(t,x)), v(t,x)\ra\big|_{x=0}^{x=1} + \la \mc{C}^2(t,x,j_g(t,x)),v(t,x)\ra\big|_{x=0}^{x=1}\Big) dt
\end{multline*}
must hold for all $v\in \mathscr{C}^2_0([0,T],C^2_{\mathfrak{g}})$.
Therefore
\[
\mc{P}_g(t,x,j_g(t,x))=0
\]
for all $(t,x)\in [0,T]\times [0,1]$
which gives \eqref{equation Cosserat dyn 1} (recall \eqref{eq:mc{E}_g:1})
and
\[
{}& (\mc{L}_g)_4(t,0)=F_0(t,0,j_g(t,0))+\mc{C}^2(t,0,j_g(t,0))^*\lambda(t,x) \\
{}& (\mc{L}_g)_4(t,1)=-F_1(t,1,j_g(t,1))+\mc{C}^2(t,1,j_g(t,1))^*\lambda(t,x)
\]
for all $t\in [0,T]$
which is just \eqref{equation Cosserat boun 1}.

\subsection{Vakonomic approach}\label{Annex : Vakonomic method}
The weak form of the Cosserat-Poincaré equation~\eqref{eq:Hamilton equation} derived from the Lagrange-d'Alembert principle can handle both holonomic and non-holonomic constraints. However, for \emph{holonomic constraints}, it is possible to recover equation~\eqref{eq:Hamilton equation} using the \emph{vakonomic approach}. More precisely, taking the notations introduced in Definition~\ref{def:Weak form} and considering the function $\Phi : [0,T]\times C^2_G \rightarrow E $
which is the integrated form of the holonomic constraint $(A,a)$ defined in equation~\eqref{eq:holonomic},
equation~\eqref{eq:Hamilton equation} is equivalent to the variational problem
\begin{equation} \label{def:vakonomic approach}
\mathcal{D}\tilde{S}(g,\lambda)(v)=-\int_0^T \langle F\big(t,g(t,\cdot),\eta_g(t,\cdot)\big),v(t,\cdot)\rangle\, dt \quad \forall v\in C^1_0([0,T];C^1_\mathfrak{g}) ,
\end{equation}
where $F$ is a force and $\tilde{S}: [0,T]\times C_G^2 \times C^1_{E^*}\rightarrow \mathbb{R}$ is the augmented action defined as
\begin{equation}
    \tilde{S}(t,g,\lambda)= \int_0^T L(t,g(t,\cdot),\eta_g(t,\cdot)) + \langle \lambda, \Phi(t,g(t,\cdot))\rangle dt.
\end{equation}
Note that, in the literature, the function $\Phi$ is often referred to as the constraint, meaning that a movement $g$ satisfies the constraint $\Phi$ if
$$
\Phi(t,g(t,\cdot))=0\quad \forall t\in[0,T].
$$

However, this definition of constraint is not suitable for non-holonomic constraints depending on the time  twist
$\eta_g$, since the corresponding vakonomic equations of motion generally differ from those obtained from the Lagrange-d'Alembert principle and are therefore not consistent with the mechanical equations of motion; see \cite{lewis1995variational} and Appendix~\ref{Annex : Vakonomic method} for an example.

We next provide an exemple where the vakonomic approach and the Lagrange-d'Alembert principle generally yield different equations for non-holonomic constraints. Consider the normal velocity constraint \begin{equation}\label{def:cons velocity norm}
     \langle \xi_g(t,x), \eta_g(t,x)\rangle=0 \quad \forall (t,x)\in [0,T]\times[0,1].\end{equation}
Under this constraint, the  Lagrange-d'Alembert principle yields the strong form of the Cosserat equation~\eqref{Cosserat Lagrangian 1} on $[0,T]\times [0,1]$,
namely
\begin{empheq}[left=\empheqlbrace]{align}\label{ex:vakonomic_method:cosserat_equation}
&\partial_t(I\eta) - \partial_x\Lambda - \operatorname{ad}_{\eta}^*(I\eta) + \operatorname{ad}_{\xi}^*\Lambda+\lambda\xi = \mathcal{F}(t,x, \xi,\partial_x\xi, \eta), \\
&\langle \xi, \eta\rangle=0, \nonumber \\
&\partial_x \eta - \partial_t \xi = [\eta, \xi], \nonumber \\
&\Lambda(t, 0) = -F_0(t, \xi(t, 0), \eta(t, 0)), \nonumber \\
&\Lambda(t, 1) = F_1(t, \xi(t, 1), \eta(t, 1)), \nonumber \\
&(\eta(0, x), \xi(0, x)) = (g^0(x)^{-1}g^1(x), g^0( x)^{-1}\partial_x g^0(x)), \nonumber
\end{empheq}
where  $\Lambda := H(\xi - \xi_0)$ is defined in equation \eqref{def:stress} and $\lambda : [0,T]\times[0,1] \rightarrow \mathbb{R}$ is the Lagrange multipliers induced by the constraints~\eqref{def:cons velocity norm}.

On the other hand, by taking $\Phi(t,g,\eta)= \langle \xi_g(t,x), \eta_g(t,x)\rangle$, the variational problem deduced from the vakonomic approach~\eqref{def:vakonomic approach} yields the system of equations on $[0,T]\times [0,1]$
\begin{empheq}[left=\empheqlbrace]{align}\label{ex:vakonomic equation}
&\partial_t(I\eta) - \partial_x\Lambda - \operatorname{ad}_{\eta}^*(I\eta) + \operatorname{ad}_{\xi}^*\Lambda+\lambda(-\operatorname{ad}_{\xi}^*\eta-\operatorname{ad}_{\eta}^*\xi + \partial_x \eta + \partial_t \xi)= \mathcal{F}(t,x, \xi,\partial_x\xi, \eta), \\
&\langle \xi, \eta\rangle=0, \nonumber \\
&\partial_x \eta - \partial_t \xi = [\eta, \xi], \nonumber \\
&\Lambda(t, 0) = -F_0(t, \xi(t, 0), \eta(t, 0))+\eta(t,0)\lambda(t,0), \nonumber \\
&\Lambda(t, 1) = F_1(t, \xi(t, 1), \eta(t, 1))+\eta(t,1)\lambda(t,1), \nonumber \\
&(\eta(0, x), \xi(0, x)) = (g^0(x)^{-1}g^1(x), g^0( x)^{-1}\partial_x g^0(x)). \nonumber
\end{empheq}
Clealy equations \eqref{ex:vakonomic equation} and \eqref{ex:vakonomic_method:cosserat_equation} are different.

\subsection{Left-invariant Cosserat-Poincar\'e Equations on $\SE(2)$ and $\SE(3)$ with no force and with constraints expressed in coordinates}\label{app:left_invariant_CP:1}

\subsubsection{The $\SE(2)$ case}\label{ex:SE(2) in coordinate}

This case describes the motion of snake-like objects on a plane. Note that $\SO(2)$ is commutative whereas $\SO(3)$ is not. Hence, this case is an interesting first step to understand the locomotion of beam.
Using the map defined in \eqref{def : Lie algebra isomorphism}, the Lie algebra
$\mathfrak{g}=\mathfrak{se}(2)$ can be identified with $\mathbb{R}^3$ and its dual space $\mathfrak{se}(2)^*$ with $\mathfrak{se}(2)$ using the canonical inner product $\la\cdot,\cdot\ra_{\R^3}$ on $\R^3$.
In this case, for $\zeta = \begin{pmatrix} \Omega \\ V \end{pmatrix}\in \mathbb{R}^3$ with $\Omega\in \mathbb{R}$ and $V\in \mathbb{R}^2$, it is easy to check that
\[ \operatorname{ad}_{\zeta}= \begin{pmatrix}
    0 & 0 \\ -\mathrm{Q} V & \Omega\mathrm{Q}
\end{pmatrix},
\quad  \operatorname{ad}_{\zeta}^*=\ad^T_{\zeta}= \begin{pmatrix}
    0 & V^T\mathrm{Q} \\ 0 & -\Omega\mathrm{Q},
\end{pmatrix}
\]
with the transpose $\ad^T_{\zeta}$
defined using $\la\cdot,\cdot\ra_{\R^3}$.

Recall the notations $\eta = \begin{pmatrix}
    \Omega \\ V
\end{pmatrix}$, $\xi = \begin{pmatrix} K \\\Gamma \end{pmatrix}$, $\xi_0 = \begin{pmatrix} K_0 \\\Gamma_0 \end{pmatrix}$ introduced in \eqref{def:twistSE(k)}
and suppose that the functions $I$ and $H$
(see Section \ref{left-invariant Lagrangian section})
take values in
$\mathscr{L}(\mathbb{R}^3)$ and are of the form
\[
    I=\begin{pmatrix}
        j_1 & 0_{1\times 2} \\0_{2\times 1} & J_2
    \end{pmatrix}, \quad H=\begin{pmatrix}
        h_1 & 0_{1\times 2} \\0_{2\times 1}  & H_2
    \end{pmatrix}.
\]
Let us be given initial conditions $(\Omega^0,V^0,K^0,\Gamma^0)$
and a left-invariant distributed force $\mathcal{F}=\begin{pmatrix}
    \mathcal{F}_\Omega\\ \mathcal{F}_V
\end{pmatrix}$ with
$\mathcal{F}_\Omega$ and $\mathcal{F}_V$ being functions defined on $[0,T]\times \mathfrak{se}(2)^2$
and taking values in $\mathbb{R}$ and $\mathbb{R}^2$, respectively.
Then the Cosserat-Poincar\'e equation \eqref{Cosserat Lagrangian 1} can be written as the following system of partial differential equations on $[0,T]\times [0,1]$ for the  unknowns $(\Omega,V,K,\Gamma)$:
\begin{subequations}
\begin{empheq}[left=\empheqlbrace]{align}
 &j_1\partial_t\Omega - \partial_x\big(h_1(K-K_0)\big) - V^T\mathrm{Q} J_2 V + \Gamma^T \mathrm{Q} H_2(\Gamma - \Gamma_0)=\mathcal{F}_\Omega, \\
 &J_2\partial_t V - \partial_x\big(H_2(\Gamma-\Gamma_0)\big) - \Omega \mathrm{Q} J_2 V + K \mathrm{Q} H_2 (\Gamma - \Gamma_0 ) = \mathcal{F}_V, \nonumber \\
   &\partial_x\Omega - \partial_tK = 0 \nonumber \\
    &\partial_x V - \partial_t \Gamma = \Omega \mathrm{Q} \Gamma - K \mathrm{Q} V, \nonumber \\
    &(K,\Gamma)_{|x=0}= (K_0,\Gamma_0)(0), \nonumber \\
    &(K,\Gamma)_{|x=1}= (K_0,\Gamma_0)(1), \nonumber \\
    &(\Omega,V,K,\Gamma)_{|t=0}=(\Omega^0,V^0,K^0,\Gamma^0),
    \nonumber
    \end{empheq}
\end{subequations}
where $(\Omega^0,V^0,K^0,\Gamma^0)$ are the initial conditions and  $(K_0,\Gamma_0)$ are the space twists at rest.

\subsubsection{The $\SE(3)$ case}\label{ex:SE(3) in coordinate}

This particular case has numerous applications, such as surgical robots \cite{7314984} and the study of the dynamics of bacterial flagella \cite{ko2017modeling}. Using the map defined in \eqref{def : Lie algebra isomorphism}, the Lie algebra $\mathfrak{g}=\mathfrak{se}(3)$ can be identified with $\mathbb{R}^6$ and its dual space $\mathfrak{se}(3)^*$ with $\mathfrak{se}(3)$ using the canonical inner product $\la\cdot,\cdot\ra_{\R^6}$ on $\mathbb{R}^6$.

In this case, for $\zeta = \begin{pmatrix}
    \Omega \\ V
\end{pmatrix}$ with $(\Omega,V) \in (\mathbb{R}^3)^2$, we have
\[
    \operatorname{ad}_\zeta =\begin{pmatrix}
        \widehat{\Omega} & 0 \\ \widehat{V} & \widehat{\Omega}
    \end{pmatrix},
    \quad
    \operatorname{ad}_\zeta^* = \operatorname{ad}_\zeta^T  =\begin{pmatrix}
        -\widehat{\Omega} & -\widehat{V} \\ 0 & -\widehat{\Omega}
    \end{pmatrix}
\]
with the transpose $\ad^T_{\zeta}$
defined using $\la\cdot,\cdot\ra_{\R^6}$.

We use the notations
$\eta = \begin{pmatrix} \Omega \\ V \end{pmatrix}$, $ \xi = \begin{pmatrix}
    K \\ \Gamma \end{pmatrix}$ and $\xi_0=\begin{pmatrix} K_0 \\ \Gamma_0\end{pmatrix}$
introduced in \eqref{def:twistSE(k)} and suppose that the functions $I,H:[0,1]\to\mathscr{L}(\mathbb{R}^6)$ (see Section \ref{left-invariant Lagrangian section})
of $x\in [0,1]$
    are of the form :
    \begin{equation}
        I = \begin{pmatrix}
            J_1 & 0_{3\times 3} \\ 0_{3\times 3} & J_2
        \end{pmatrix}, \quad H=\begin{pmatrix}
            H_1 &  0_{3\times 3} \\  0_{3\times 3} & H_2
        \end{pmatrix}.
    \end{equation}
Let us be given initial conditions $(\Omega^0,V^0,K^0,\Gamma^0)$ and a left-invariant distributed force
$\mathcal{F}=\begin{pmatrix}
    \mathcal{F}_\Omega & \mathcal{F}_v
\end{pmatrix}$
with $\mathcal{F}_\Omega$ and $\mathcal{F}_V$ being functions defined on $[0,T]\times \mathfrak{se}(2)^2$ and taking values in $\mathbb{R}^3$.
Then the Cosserat-Poincar\'e equation~\eqref{Cosserat Lagrangian 1} becomes the following system of partial differential equations on $[0,T]\times [0,1]$ for the unknowns $(\Omega,V,K,\Gamma)$:
\begin{subequations}
\label{matrix equation SE(3)}
\begin{empheq}[left=\empheqlbrace]{align}
\label{Equation SE(3) vectoriel 1} &J_1\partial_t\Omega - \partial_x \big(H_1 (K - K_0)\big) + \widehat{\Omega} J_1\Omega + \widehat{V} J_2V - \widehat{K} H_1 (K - K_0) - \Gamma H_2 (\Gamma - \Gamma_0)=\mathcal{F}_
\Omega, \\
&J_2\partial_t V - \partial_x\big(H_2(\Gamma-\Gamma_0)\big) + \widehat{\Omega} J_2V - \widehat{K} H_2(\Gamma - \Gamma_0)=\mathcal{F}_V ,\nonumber \\
&\partial_x \Omega - \partial_t K = \widehat{\Omega} K, \nonumber \\
&\partial_x V - \partial_t \Gamma = \widehat{V} K + \widehat{\Omega} \Gamma, \nonumber \\
&(K,\Gamma)_{|x=0}= (K_0,\Gamma_0)(0), \nonumber \\
&(K,\Gamma)_{|x=1}= (K_0,\Gamma_0)(1), \nonumber \\
&(\Omega,V,K,\Gamma)_{|t=0}=(\Omega^0,V^0,K^0,\Gamma^0), \nonumber
\end{empheq}
where $(\Omega^0,V^0,K^0,\Gamma^0)$ are the initial conditions and  $(K_0,\Gamma_0)$ are the space twist at rest.

\end{subequations}
One can also write the above equations in $\mathfrak{so}(3)$ instead, by
using the hat-operator \eqref{eq:hat_operator:1}.
Recall that
\[
   \widehat{\widehat{Y} X} =\widehat{Y \times X}= [\widehat{X},\widehat{Y}]_{\mathfrak{so}(3)} \quad \forall (X,Y)\in\mathbb{R}^3,
   \]
where $\times$ is the usual cross product and $[\cdot,\cdot]_{\mathfrak{so}(3)}$ is the Lie bracket of $\mathfrak{so}(3)$.
For instance, applying the hat-operator to the first equation in
\eqref{Equation SE(3) vectoriel 1}
re-expresses it in $\mathfrak{so}(3)$ as
\begin{equation}
\begin{aligned}
    \widehat{\mathcal{F}}_\Omega=& \widetilde{J}_1\partial_t\widehat{\Omega} - \partial_x \big(\widetilde{H}_1 (\widehat{K} - \widehat{\Omega}_0)\big) + [\widehat{\Omega}, \widetilde{J}_1\widehat{\Omega}]_{\mathfrak{so}(3)} + [\widehat{V}, \widetilde{J}_2\widehat{V}]_{\mathfrak{so}(3)} \\
    &- [\widehat{K}, \widetilde{H}_1 (\widehat{K} - \widehat{\Omega}_0)]_{\mathfrak{so}(3)} - [\widehat{\Gamma}, \widetilde{H_2} (\widehat{\Gamma} - \widehat{V}_0)]_{\mathfrak{so}(3)},
    \end{aligned}
\end{equation} where the linear applications $\widetilde{H}_i$ and $\widetilde{J}_i$ are defined for $i\in \{1,2\}$ by
\begin{equation}
    \widetilde{H_i}\widehat{X}=\widehat{H_iX}, \quad \widetilde{J_i}\widehat{X}=\widehat{J_iX} \quad \forall X\in \mathbb{R}^3.
\end{equation}

\subsubsection{Kirchhoff beam (continued)} \label{Kirchhoff beam Appendix}

In order to express Equation~\eqref{Cosserat Lagrangian example 2} in coordinates, we assume that $I=\operatorname{diag}(J_1,J_2)$ and $H=\operatorname{diag}(H_1,H_2)$, where $I_1,I_2,H_1,H_2:[0,1]\to \mathscr{L}(\R^3)$ are symmetric positive definite matrix valued functions of $x\in [0,1]$.
The constraint \eqref{constraint Lagrangian example 2} can be written as
\begin{equation}
\partial_t \Gamma(t,x) = 0\quad \forall (t,x)\in[0,T]\times[0,1],
\end{equation}
where $\Gamma$ has been introduced in \eqref{def:twistSE(k)}.
In other words $\Gamma(t,x) = \Gamma(0,x)$ for all $(t,x) \in [0,T] \times [0,1]$.
Assuming that the linear space rate at $t=0$ is at rest, i.e. $\Gamma(0,x)=\Gamma_0(x)$,
we have $\Gamma(t,x) = \Gamma_0(x) $ for all $(t,x) \in [0,T] \times [0,1]$, and \eqref{Cosserat Lagrangian example 2} can be reduced to the following system of partial differential equations on $[0,T]\times [0,1]$ in the unknowns $(\Omega, V, K, \lambda)$:
\begin{subequations}
    \label{Cosserat Lagrangian reduced example 2}
    \begin{empheq}[left=\empheqlbrace]{align}
    &\nonumber J_1 \partial_t \Omega - \partial_x \big(H_1 (K - K_0)\big) + \widehat{\Omega} J_1 \Omega + \widehat{V} J_2 V - \widehat{K} H_1 (K - K_0) - \widehat{V}_0 \lambda = 0,  \\
    &\nonumber J_2 \partial_t V - \partial_x \lambda + \widehat{\Omega} J_2 V - \widehat{K} \lambda = 0, \\
    &\nonumber \partial_x \Omega - \partial_t K = \widehat{\Omega}_\eta K,  \\
    &\nonumber \partial_x V = \widehat{V} K + \widehat{\Omega} \Gamma_0,  \\
    &\nonumber(K,\lambda)_{|x=0}= (K_0,0)(0), \\
    &\nonumber(K,\lambda)_{|x=1}= (K_0,0)(1) \\
    \nonumber&(\Omega,V,K)_{|t=0}=(\Omega^0,V^0,K^0),
    \end{empheq}
\end{subequations}
where $\lambda : [0,T]\times[0,1] \rightarrow \mathbb{R}^3$ is the Lagrange multiplier associated with the Kirchhoff beam constraint, $(\Omega^0,V^0,K^0)$ are the initial conditions
and $(\Omega_0,V_0)\simeq \eta_0$, $(K_0,\Gamma_0)\simeq \xi_0$ are the time and space twists at rest, respectively.

\subsection{Reference trajectory and shape}

Let $g_r \in C^{2,2}_G$ be a reference trajectory. We first determine a distributed force $\mathcal{F}_r:[0,T]\times[0,1]\rightarrow \mathfrak{g}^*$ such that the solution of the Cosserat-Poincaré equation matches the reference trajectory $g_r$. We then show how distributed forces and (induced) internal forces can also be interpreted as reaction forces coming from reference trajectories and reference shapes.

\subsubsection{Reference trajectory}

Let us be given initial conditions $\eta^0 \in C^1_{\mathfrak{g}}$ and $\xi^0 \in C^2_{\mathfrak{g}}$
and a force $\mathcal{F}_r\in C^{1,1}_\mathfrak{g}$ depending only on $t,x$ (hence left-invariant).
Denote by $(\eta,\xi)$ the twists that are the solution
of the left-invariant the Cosserat-Poincaré equation
\eqref{Cosserat Lagrangian 1}
on $[0,T]\times [0,1]$ with the force $\mathcal{F}_r$ and no constraint
and let $g$ be an associated movement of the Cosserat beam
i.e. $(\eta_g,\xi_g)=(\eta,\xi)$.

Given a reference trajectory $g_r$, we seek a force $\mathcal{F}_r(t,x)$
enforcing exact
trajectory matching at all times, i.e., the movement $g$ of the Cosserat beam should satisfy
\begin{equation}
    g(t,x) = g_r(t,x) \quad \forall (t,x)\in[0,T]\times [0,1].
\end{equation}
To this end, assume $g_r\in C^{2,2}_G$ and define the reference trajectory constraint \eqref{eq:prescribed constraint}
\begin{equation}
    \label{def:reference trajectory}
    \mathcal{C}(t,x,g,\xi,\eta,\partial_x\eta) = \eta_{g_r}(t,x) - \eta,
\end{equation}
where $\eta_{g_r}(t,x) = g_r^{-1} \partial_t g_r(t,x)$.
Assume that the initial configuration of the motion $g$ satisfies $g(0,\cdot)=g_r(0,\cdot)$.
The constraint~\eqref{def:reference trajectory} imposes $\eta=\eta_{g_r}$, which then together with the structure equation \eqref{eq:Cosserat Lagrangian 1 struc}
and the initial condition $g(0,\cdot)=g_r(0,\cdot)$
(hence $\xi(0,\cdot)=\xi_{g_r}(0,\cdot)$)
yields $\xi=\xi_{g_r}$.
Therefore, the Lagrange multiplier associated with the constraint~\eqref{def:reference trajectory} is obtained from
the Cosserat--Poincaré equation~\eqref{Cosserat Lagrangian 1} on $[0,T]\times[0,1]$ with no external force
\begin{subequations}
\label{eq:prescribed_systemwithforce}
    \begin{empheq}[left=\empheqlbrace]{align}
    &\partial_t(I\eta) - \partial_x(H(\xi - \xi_0)) - \operatorname{ad}_{\eta}^*(I\eta) + \operatorname{ad}_{\xi}^*(H(\xi - \xi_0)) = - \lambda, \\
    &\partial_t \xi - \partial_x \eta = [\xi,\eta], \\
    &\eta=\eta_{g_r}, \\
    &\xi(t,0)=\xi_0(0), \quad \xi(t,1)=\xi_0(1), \\
    &(\eta(0,x),\xi(0,x))=(\eta^0(x),\xi^0(x)),
    \end{empheq}
\end{subequations}
as the function $\lambda:[0,T]\times [0,1]\to\mathfrak{g}$ is defined by
\begin{equation}\label{eq:force for reference trajectory}
     \lambda:=-\partial_t(I\eta_{g_r}) + \partial_x(H(\xi_{g_r} - \xi_0)) + \operatorname{ad}_{\eta_{g_r}}^*(I\eta_{g_r}) - \operatorname{ad}_{\xi_{g_r}}^*(H(\xi_{g_r} - \xi_0)).
\end{equation}
Taking the force $\mathcal{F}_r$ as the opposite of the reaction force induced by the constraint~\eqref{def:reference trajectory},
that is $\mathcal{F}_r(t,x):=-\lambda(t,x)$, we deduce that the solution $(\eta,\xi)$ of equation~\eqref{eq:prescribed_systemwithforce} satisfies $(\eta,\xi)=(\eta_{g_r},\xi_{g_r})$.
Finally, since also $g(0,\cdot)=g_r(0,\cdot)$,
the uniqueness of the reconstruction of $g$ from $(\xi,\eta)$
(see Remark \ref{re:reconstruction:1} and the references therein),
allows us to conclude that $g=g_r$,
that is, the movement $g$ of the Cosserat beam matches at all times the reference trajectory $g_r$.

\subsubsection{Forces as reference trajectories and reference shapes}


\begin{lemma}\label{lemma: equivalence inter react forces} Let $T>0$, $(\eta^0,\xi^0)\in C^2_\mathfrak{g}$. In equation~\eqref{Cosserat Lagrangian 0.5}:
\begin{itemize}
    \item[(i)] Any distributed force $\mathcal{F}\in C^{1,1}_{\mathfrak{g}}$ depending only on $t,x$ can be represented as a reaction force (see Remark~\ref{rem:reaction force}) associated to the reference \emph{trajectory} constraint~\eqref{eq:prescribed constraint}
    with appropriately chosen $\eta_r$.

    \item[(ii)] Any internal force $F_{\mathrm{int}}$ induced by a function $\Lambda_{\mathrm{int}}\in \mathscr{C}^2([0,T]\times[0,1]\times\mathfrak{g};\mathfrak{g})$
    (see \eqref{def:internal force dens})
    can be represented as a reaction force (see Remark~\ref{rem:reaction force}) associated to the reference \emph{shape} constraint~\eqref{eq:reference shape}
    with appropriately chosen $\xi_r$.
\end{itemize}
\end{lemma}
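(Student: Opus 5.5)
The plan is to compare, term by term, how each constraint enters the strong form of Theorem~\ref{least action thm}. For each of the two constraint types we read off the generalized reaction force $A(t,g)^*\lambda$ that it produces. We then show that a suitable Lagrange multiplier makes this reaction coincide with the prescribed force. In both cases the reference datum ($\eta_r$, resp.\ $\xi_r$) is taken to be the corresponding twist of the solution $g$ itself. The constraint is then satisfied trivially, and only the multiplier has to be chosen.

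\emph{Part (i).} Let $g$ solve \eqref{Cosserat Lagrangian 0.5} with the distributed force $\mathcal{F}(t,x)$ and no constraint. Put $\eta_r:=\eta_g$, so $g$ satisfies the reference trajectory constraint \eqref{eq:reference_trajectory}. In the notation of Assumption~\ref{Ass : constraint} this constraint reads $\mathcal{C}^1=\Id$, $\mathcal{C}^2=0$, $\widetilde a=-\eta_r$. By Theorem~\ref{least action thm} its contribution to \eqref{Cosserat Lagrangian 0.5 1} is $+\lambda$, and it contributes nothing to the boundary conditions. Choosing $\lambda:=-\mathcal{F}$, which lies in $C^{1,1}$ as required by Assumption~\ref{ass:injecting_lambda:1}, the force-free constrained system is then literally identical to the forced unconstrained one. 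This matches the computation \eqref{eq:force for reference trajectory} in the preceding subsection. Hence $\mathcal{F}=-A^*\lambda$ in the weak form \eqref{eq:Hamilton equation}, i.e.\ $\mathcal{F}$ is the reaction force of the constraint.

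\emph{Part (ii).} Here we take $\xi_r:=\xi_g$, so that $g$ satisfies the reference shape constraint \eqref{eq:reference shape}. This constraint has $\mathcal{C}^1=\ad_\xi$, $\mathcal{C}^2=\Id$, $\widetilde a=-\partial_t\xi_r$. Compatibility is exactly the structure equation of Proposition~\ref{prop:structure}. The weak reaction term is
\[
\langle\lambda,\partial_x v+\ad_\xi v\rangle
=\int_0^1\langle \ad_\xi^*\lambda-\partial_x\lambda,v\rangle\,dx+\big[\langle\lambda,v\rangle\big]_0^1 .
\]
We compare this with the weak form of $F_{\mathrm{int}}$ in Remark~\ref{rem:internal force}, which equals $-\int_0^1\langle\Lambda_{\mathrm{int}},v'\rangle+\langle\ad_\xi^*\Lambda_{\mathrm{int}},v\rangle\,dx$. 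Choosing $\lambda(t,x):=-\Lambda_{\mathrm{int}}(t,x,\xi_g(t,x))$ gives $F_{\mathrm{int}}=-A^*\lambda$ as functionals on $C^1_{\mathfrak g}$. The interior equation and the boundary conditions \eqref{Cosserat Lagrangian 0.5 4}--\eqref{Cosserat Lagrangian 0.5 5} (with the $\mathcal{C}^{2*}\lambda$ terms) therefore coincide with \eqref{def:internal force dens}--\eqref{def:internal force bound}. The chain rule $\frac{d}{dx}\Lambda_{\mathrm{int}}(t,x,\xi)=\partial_x\Lambda_{\mathrm{int}}+D_\xi\Lambda_{\mathrm{int}}\cdot\partial_x\xi$ reconciles the two interior expressions.

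The main obstacle is bookkeeping rather than analysis. First, the sign conventions for $\lambda$ versus $A^*\lambda$ and for the boundary terms $F_0,F_1$ must be tracked consistently; they differ between \eqref{equation Cosserat boun 1} and \eqref{Cosserat Lagrangian 0.5 4}. Second, one must check regularity: that $\lambda=-\Lambda_{\mathrm{int}}(\cdot,\cdot,\xi_g)$ belongs to $C^{1,1}$ and that $\xi_r=\xi_g\in C^{2,1}_{\mathfrak g}$. Both follow from $g\in C^{2,2}_G$ and $\Lambda_{\mathrm{int}}\in\mathscr{C}^2$.
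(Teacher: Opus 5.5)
Your strategy is the paper's: take the reference datum ($\eta_r$, resp.\ $\xi_r$) from the solution of the forced unconstrained problem, so the constraint holds automatically, and choose the multiplier so that the reaction term reproduces the force. Part (i), with $\lambda=-\mathcal{F}$, is exactly the paper's argument.

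Part (ii), however, has a sign error at the very point you flagged as delicate. With your choice of $\lambda$, the two systems do not coincide. The expression you quote gives $\langle F_{\mathrm{int}},v\rangle=-\int_0^1\langle\Lambda_{\mathrm{int}},\partial_x v+\ad_\xi v\rangle\,dx=-\langle A^*\Lambda_{\mathrm{int}},v\rangle$. You correctly derived the requirement $F_{\mathrm{int}}=-A^*\lambda$ in (i) from the weak form \eqref{eq:Hamilton equation}, where $F$ enters with a minus sign and $A^*\lambda$ with a plus sign. That requirement forces $\lambda=+\Lambda_{\mathrm{int}}(t,x,\xi_g(t,x))$, which is the paper's choice.

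With your $\lambda=-\Lambda_{\mathrm{int}}$ one gets $A^*\lambda=+F_{\mathrm{int}}$, so the force-free constrained system is the one driven by $-F_{\mathrm{int}}$. Concretely, the interior equation acquires the right-hand side $\partial_x\lambda-\ad_\xi^*\lambda=-\mathcal{F}_{\mathrm{int}}$. Also, \eqref{Cosserat Lagrangian 0.5 4}--\eqref{Cosserat Lagrangian 0.5 5} give $\Lambda(t,0)=\Lambda_{\mathrm{int}}(t,0,\xi)$ and $\Lambda(t,1)=\Lambda_{\mathrm{int}}(t,1,\xi)$. The forced system with $F_0=\Lambda_{\mathrm{int}}(t,0,\xi)$ and $F_1=-\Lambda_{\mathrm{int}}(t,1,\xi)$ instead requires $\Lambda(t,0)=-\Lambda_{\mathrm{int}}(t,0,\xi)$ and $\Lambda(t,1)=-\Lambda_{\mathrm{int}}(t,1,\xi)$.

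Flipping the sign of $\lambda$ repairs the argument. The rest of what you wrote then goes through unchanged: compatibility via the structure equation, the chain rule for $\frac{d}{dx}\Lambda_{\mathrm{int}}$, and the $C^{1,1}$ regularity of $\lambda$.

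Be aware that the paper's own closing phrase ``$F_{\mathrm{int}}=A^*\lambda$'' is inconsistent with its part (i). What actually matters is that the constrained force-free system \eqref{eq:reference shape equivalence 1} coincides with the forced unconstrained one, and the paper's choice $\lambda=\Lambda_{\mathrm{int}}$ achieves that.
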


\begin{proof}
     Let us first prove (i).
     Let $\mathcal{F}\in C^{1,1}_{\mathfrak{g}}$. Given initial conditions $(\eta^0,\xi^0)\in C^2_\mathfrak{g}$, let us denote $(\eta_r,\xi_r)$ the solution of the Cosserat-Poincaré equation~\eqref{Cosserat Lagrangian 1} with the force $\mathcal{F}$ and no constraints.
Then it is immediate that $(\eta_r,\xi_r,\lambda)$ with $\lambda := -\mathcal{F}$ is a solution to the Cosserat-Poincaré equation~\eqref{Cosserat Lagrangian 1} on $[0,T]\times[0,1]$ subjected to no force and to the constraint~\eqref{eq:prescribed constraint}
i.e.
\begin{subequations}
\label{eq:reference trajectory}
    \begin{empheq}[left=\empheqlbrace]{align}
    &\partial_t(I\eta) - \partial_x\Lambda - \operatorname{ad}_{\eta}^*(I\eta) + \operatorname{ad}_{\xi}^*\Lambda = -\lambda,  \\
    &\eta=\eta_{g_r}, \\
    & \partial_x \eta - \partial_t\xi = [\eta,\xi], \\
    &\Lambda(t,0)=0, \quad \Lambda(t,1)=0, \\
    &(\eta(0,\cdot),\xi(0,\cdot))=(\eta^0,\xi^0).
    \end{empheq}
\end{subequations}
Hence the distributed force $\mathcal{F}=-A^*\lambda=-\lambda$ is a reaction force associated to the reference trajectory constraint as claimed.

To prove (ii), we apply the same method. Let $(\eta^0,\xi^0)\in C^2_G$ and $F_{\mathrm{int}}$ be an internal force induced by $\Lambda_{\mathrm{int}}\in \mathscr{C}^2([0,T]\times[0,1]\times \mathfrak{g};\mathfrak{g})$. We denote $(\eta_r,\xi_r)$ the solution of the Cosserat-Poincaré equation~\eqref{Cosserat Lagrangian 1} with no constraints and subjected to the (internal) force $F_{\mathrm{int}}$.


Let us recall the reference shape constraint~\eqref{eq:reference shape}
\begin{equation}
\label{eq:reference shape2}
    \mathcal{C}(t,\xi,\eta,\partial_x\eta) = \operatorname{ad}_{\xi} \eta+\partial_x \eta -\partial_t\xi_r
\end{equation}
associated to the shape $\xi_r$.
Then $(\eta_r,\xi_r,\lambda)$ with $\lambda(t,x):=\Lambda_{\mathrm{int}}(t,x,\xi_r(t,x))$
is a solution to the Cosserat-Poincaré equation on $[0,T]\times [0,1]$
subjected to no force and with constraint~\eqref{eq:reference shape2} i.e.
\begin{subequations}\label{eq:reference shape equivalence 1}
    \begin{empheq}[left=\empheqlbrace]{align}
    &\partial_t(I\eta)-\partial_x\Lambda-\operatorname{ad}_{\eta}^T(I\eta)+\operatorname{ad}_{\xi}^T\Lambda-\partial_x \lambda + \operatorname{ad}_\xi^T \lambda=0, \\
    &\partial_x \eta - \partial_t\xi = [\eta,\xi],\\
    & \operatorname{ad}_{\xi} \eta+\partial_x \eta-\partial_t\xi_r  = 0, \\
    &\Lambda(t,0)=-\lambda(t,0),\\
    &\Lambda(t,1)= -\lambda(t,1),\\
    &(\eta(0,\cdot),\xi(0,\cdot))=(\eta^0,\xi^0).
    \end{empheq}
    \end{subequations}
Hence the internal distributed force $F_{\mathrm{int}}=A^*\lambda$ is a reaction force associated to the reference shape constraint \eqref{eq:reference shape} $(A,a)$
as claimed.

\end{proof}

\subsection{Rigid body}\label{sec:rigid-body}

We start by deriving a constraint on the time and space twists expressing the fact that the body (beam, rod) is rigid.
We then show how the Euler-Poincaré equation describing the evolution of a rigid body can be recovered from the Cosserat-Poincaré equation associated with the rigid body constraint previously obtained.
\subsubsection[Lemma Rigid body]{On the rigid body constraint}
We have the following lemma.
\begin{lemma}[Rigid body constraint]\label{lemma :Equivalent rigid body}
Let us take a movement $g\in C^{2,2}_{\SE(k)}$ representing a beam such that \begin{equation}\label{eq : Condition minimale rigid body}
    \partial_t\xi_g=0 \quad \forall (t,x)\in[0,T]\times[0,1].
\end{equation}
Consider two point functions $x_1,x_2 : [0,T]\rightarrow \mathbb{R}^k$ of the beam represented by $g$, as explained in Remark~\ref{rem:mechanics}. Then $t\mapsto \|x_1(t) - x_2(t)\|$
is a constant function.
\end{lemma}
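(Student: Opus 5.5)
The plan is to reduce rigidity to the statement that all cross-sectional poses move by one common left-multiplication. Then the distance between any two material points is the distance between images of fixed material points under a single isometry, hence constant in time.

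\textbf{Step 1 (the moving frame is common to all sections).} Fix the section $x=0$ and set
\[
h(t,x):=g(t,0)^{-1}g(t,x)\in\SE(k).
\]
For each fixed $t$, this function of $x$ satisfies the linear ODE $\partial_x h = h\,\xi_g(t,x)$ with $h(t,0)=\Id$, because $\partial_x g = g\xi_g$ and $g(t,0)^{-1}$ does not depend on $x$. By \eqref{eq : Condition minimale rigid body}, $\xi_g(t,x)=\xi_g(0,x)$ for all $t$. Hence $h(t,\cdot)$ and $h(0,\cdot)$ solve the same Cauchy problem, and uniqueness for linear ODEs gives $h(t,x)=h(0,x)$. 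Therefore
\[
g(t,x)=M(t)\,g(0,x),\qquad M(t):=g(t,0)g(0,0)^{-1}\in\SE(k).
\]
This is the main point of the argument. Note that the structure equation \eqref{eq:structure} is not needed here, only the reconstruction via Remark~\ref{re:reconstruction:1}.

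\textbf{Step 2 (material points are carried by one isometry).} Following Remark~\ref{rem:mechanics}, a point function of the beam is $x_i(t)=\varphi(X_i,t)=p(X_i^1,t)+R(X_i^1,t)X_{i,\perp}$, with the material label $X_i$ fixed. In homogeneous coordinates this reads $x_i(t)=g(t,X_i^1)\cdot X_{i,\perp}$, using the action $g\cdot y = Ry+p$. Step 1 then gives
\[
x_i(t)=M(t)\cdot\big(g(0,X_i^1)\cdot X_{i,\perp}\big)=M(t)\cdot x_i(0).
\]

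\textbf{Step 3 (conclusion).} Every element of $\SE(k)$ acts on $\R^k$ as an affine isometry, since its linear part lies in $\SO(k)$. Writing $M(t)=(R_M(t),p_M(t))$, we get
\[
\|x_1(t)-x_2(t)\|=\|R_M(t)(x_1(0)-x_2(0))\|=\|x_1(0)-x_2(0)\|,
\]
which is constant in $t$.

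The one point needing care is the interpretation of ``point functions of the beam''. I will take them as material points with fixed Lagrangian labels $X_i$, as in Remark~\ref{rem:mechanics}. Regularity is not an issue: $g\in C^{2,2}_{\SE(k)}$ makes $\xi_g$ continuous, so ODE uniqueness applies directly.
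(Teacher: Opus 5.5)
Your proof is correct and follows essentially the same route as the paper. Your time-independent $h(t,\cdot)=h(0,\cdot)$ is exactly the paper's resolvent $S$ (the solution of $S'=S\xi_g$, $S(0)=\mathrm{id}$), giving $g(t,x)=g(t,0)S(x)$; your $M(t)=g(t,0)g(0,0)^{-1}$ is the $A(t)$ of the remark after the paper's proof, and the only cosmetic difference is that you compare with the configuration at $t=0$, whereas the paper strips off $g(t,0)$ and compares $S(X^1)\cdot X^\perp$ with $S(Y^1)\cdot Y^\perp$ directly.
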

\begin{proof} In this proof, we use, without recalling, the notation of Remark~\eqref{rem:mechanics}.
 Let us take two particles $X=(X^1,\dots,X^T),Y=(Y^1,\dots,Y^k)\in \mathbb{R}^k$. Their position in the configuration $\varphi : [0,T]\times \mathbb{R}^k \rightarrow \mathbb{R}^k$ of the beam is given by, for all time $t\in[0,T]$, by
\[\begin{aligned}
\varphi(t,X)=g(t,X^1)\cdot X^\perp,
\end{aligned}\]
where $X^\perp = X^2E_2 + .. + X^k E_k$ and $g(t,X^1)\cdot X^{\perp}=p(t,X^1)+R(t,X^1)X^\perp$, where $p(t,X^1)\in \R^k$ is the position of the cross section $X^1$ and $R(t,X^1)\in \SO(k)$ is the orientation of the cross section. Analogically, using the same notation, we have
\[\begin{aligned}
\varphi(t,Y)=g(t,Y)\cdot Y^\perp.
\end{aligned}\]

According to \eqref{eq : Condition minimale rigid body}, the space twist $\xi_g : [0,1] \rightarrow \mathfrak{se}(k)$ of $g$ only depends on the sections $x$ of the movement,
i.e. $\xi_g(t,x)=\xi_g(x)$, and hence the movement $g$ satisfies the equation $\partial_x g(t,x)=g(t,x)\xi_g(x)$ for every $(t,x)\in[0,T]\times[0,1]$. This can be integrated to get
\begin{equation}\label{eq:decomposition g}
g(t,x)=g(t,0)S(x),
\end{equation}
where $S : [0,1] \rightarrow \SE(k)$ is the resolvent matrix associated with $\xi_g$, i.e. $S$ is the solution of the Cauchy problem given by
\begin{equation*}
    \begin{cases}
        S'(x)=S(x)\xi_g(x) \quad \forall x\in [0,1],\\
        S(0)=\id
    \end{cases}
\end{equation*}
Using \eqref{eq:decomposition g}, for every time $t\in[0,T]$, we have
\begin{equation*}
    \begin{aligned}
        \lVert \varphi(t,X) - \varphi(t,Y) \rVert &=\lVert g(t,X^1)\cdot X^\perp -g(t,Y^1)\cdot Y^\perp\rVert \\
        &= \lVert g(t,0)\cdot \big(S(X^1)\cdot X^\perp\big) - g(t,0)\cdot\big( S(Y^1)\cdot Y^\perp \big) \rVert
        \end{aligned}
\end{equation*}
which, after noticing that $g(t,0)\in \SE(k)$, yields
\[
\lVert \varphi(t,X) - \varphi(t,Y) \rVert  =\lVert S(X^1)\cdot X^\perp - S(Y^1)\cdot Y^\perp \rVert,
\]
where the right-hand side does not depend on time $t$.
This completes the proof.
\end{proof}

\begin{remark} In the proof, we actually show that setting $A(t):=g(t,0)g(0,0)^{-1}\in\SE(k)$ for $t\geq 0$, it holds
\[
\Omega_t
=A(t)\Omega_0,
\]
where $\Omega_t$ was defined in Remark \ref{rem:mechanics}, i.e., $\{\Omega_t\}_{t\geq 0}$ is a rigid body.
\end{remark}

\subsubsection{Derivation of the Cosserat-Poincaré equation under rigid body constraint.}

We start from Corollary~\ref{cor:CPE-left-invariant} with the rigid body constraint \eqref{eq:rigid_body_constraint_time} and no force, i.e. $\mathcal{F}=0$, $\mathcal{C}^1=\operatorname{ad}_\xi,$ $\mathcal{C}^2=I_{\mathrm{d}}$ and $\widetilde{a}=0$.
Given initial conditions $\eta^0 \in C^2_{\mathfrak{se}(k)}$, $\xi^0 \in  C^1_{\mathfrak{se}(k)}$ satisfying the rigid body constraint~\eqref{eq:rigid_body_constraint_time}, the dynamics of the movement is determined by the following system of partial differential equations for the unknowns $(\eta,\xi,\lambda)$ given by, on $[0,T]\times [0,1]$
\begin{subequations}
    \label{Cosserat Lagrangian example 4}
    \begin{empheq}[left=\empheqlbrace]{align}
    &I\partial_t\eta - \partial_x\big(H(\xi - \xi_0)\big) - \operatorname{ad}_{\eta}^*I\eta + \operatorname{ad}_{\xi}^*H(\xi - \xi_0)
    + \operatorname{ad}_{\xi}^*\lambda - \partial_x\lambda = 0, \\
    &\label{constraint Lagrangian example 3}\partial_t \xi =0, \\
    &\partial_x\eta - \partial_t\xi = [\eta, \xi],  \\
    & H(\xi(t,0) -\xi_0(0))=- \lambda(t,0), \quad H(\xi(t,1) -\xi_0(1))=  - \lambda(t,1), \\
    &(\eta(0,x), \xi(0,x)) = (\eta^0(x), \xi^0(x)),
    \end{empheq}
\end{subequations}
where $\eta\in C^{1,2}_{\mathfrak{se}(k)}$ and $\xi\in C^{2,1}_{\mathfrak{se}(k)}$ are, respectively, the time and space  twists and $\lambda:[0,1]\rightarrow \mathfrak{g}^*$ is the reaction force induced by the rigid body constraint \eqref{eq:rigid_body_constraint_time}.
If the space twist initially satisfies $\xi^0(x) = \xi_0(x)$ for all $x \in [0,1]$
(recall that $\xi_0$ is the space twist at rest), using equation~\eqref{constraint Lagrangian example 3} we obtain $\xi(t,x) = \xi_0(x)$ for all $(t,x) \in [0,T] \times [0,1]$.
In this case, \eqref{Cosserat Lagrangian example 4} can be reduced to the following problem on $[0,T]\times [0,1]$ with unknowns $(\eta, \lambda)$
\begin{subequations}
    \label{Cosserat Lagrangian example 4.1}
    \begin{empheq}[left=\empheqlbrace]{align}
    &\label{eq:dynamic Rigid body EDP}I\partial_t\eta - \partial_x\lambda - \operatorname{ad}_{\eta}^*I\eta + \operatorname{ad}_{\xi_0}^*\lambda = 0,  \\
    &\label{eq:structure equation Rigid body EDP}\partial_x\eta = -\ad_{\xi_0}\eta,\\
    &\lambda(t,0) = 0, \quad \lambda(t,1) = 0, \label{eq:rigid_body:lambda_bdry_conditions:1} \\
    &\eta(0,x) = \eta^0(x).
    \end{empheq}
\end{subequations}

Consider, for every $g\in \SE(k)$, the invertible operator
\begin{equation}\begin{aligned}\label{def:operator Ad}\operatorname{Ad}_g :  \mathfrak{se}(k)& \rightarrow \mathfrak{se}(k) \\
\eta &\mapsto g \eta g^{-1}.
\end{aligned}
\end{equation}
It has the following properties:
\begin{itemize}
    \item [(i)]$\partial_x \operatorname{Ad}_h  = \operatorname{Ad}_h\operatorname{ad}_{h^{-1}\partial_x h}$ and $\partial_x \operatorname{Ad}_h  = \operatorname{ad}_{\partial_x h h^{-1}}\operatorname{Ad}_h$ for all $h\in C^{2,2}_{\SE(k)}$ .
    \item [(ii)]$\operatorname{Ad}^*_h\operatorname{ad}^*_{\operatorname{Ad}_h \eta } =\operatorname{ad}^*_\eta \operatorname{Ad}_h^* $ for all $(h,\eta) \in \SE(k)\times \mathfrak{se}(k)$.
\end{itemize}

Let us now take a movement $g\in C^{2,2}_{\SE(k)}$  such that $\eta=\eta_g$ and $\xi=\xi_g$
(such a $g$ exists; see Remark \ref{re:reconstruction:1})
and define the functions
\[
g_b(t):=g(t,0),\quad
h(t,x) := g(t,x)^{-1} g_b(t),\quad \eta_b(t):=\eta_g(t,0)=g_b(t)^{-1}g_b'(t).
\]
Since $\xi=\xi_0$, we deduce the identity $\partial_x h h^{-1} = -\xi_0$ and hence by property (i) above
\begin{equation}
\label{eq:redction rig_body_2}
   \partial_x \operatorname{Ad}_h = -\operatorname{ad}_{\xi_0} \operatorname{Ad}_h.
\end{equation}
Since $h(t,0)=\Id$ and hence $\operatorname{Ad}_{h(t,x)} \eta_b(t)\big|_{x=0}=\operatorname{Ad}_{\Id} \eta_b(t)=\eta_g(t,0)$
for all $t$, it follows that
the unique solution $\eta=\eta_g$ of the ordinary differential equation \eqref{eq:structure equation Rigid body EDP} is given by
\begin{equation}\label{eq:rigid eta selon bound}
\eta_g(t,x) =  \operatorname{Ad}_{h(t,x)} \eta_b(t) \end{equation}
It follows that the function $h=h(t,x)$ does not depend on time $t$ since
\begin{equation}
\begin{aligned}
\partial_t h h^{-1} &= \big( -g^{-1} \partial_t g g^{-1} g_{b} + g^{-1} \partial_t g_{b} \big) \big( g^{-1} g_b\big)^{-1}      \\
&=-g^{-1}\partial_t g + g^{-1} \partial_t g_b (g^{-1} g_b)^{-1}
=-\eta_g + \operatorname{Ad}_{h}\eta_b = 0,
\end{aligned}
\end{equation}
where the last equality follows from \eqref{eq:rigid eta selon bound}.
Therefore $\partial_t h(t,x)=0$ and hence $h(t,x)=h(0,x)$ for all $t,x$.

By multiplying the equation~\eqref{eq:dynamic Rigid body EDP}
from the left by $\operatorname{Ad}_{h}^*$, we obtain that on $[0,T]\times [0,1]$
\[
\operatorname{Ad}_{h}^*I\partial_t\operatorname{Ad}_{h}\eta_b - \operatorname{Ad}_{h}^*\partial_x\lambda- \operatorname{Ad}_{h}^*\operatorname{ad}_{\operatorname{Ad}_{h}\eta_b}^*I\operatorname{Ad}_{h}\eta_b + \operatorname{Ad}_{h}^*\operatorname{ad}_{\xi_0}^*\lambda = 0
\]
which, using the above property (ii) and recalling that $\partial_t h=0$, may be rewritten as
\[
\label{eq:redction rig_body_4}
\operatorname{Ad}_{h}^*I\operatorname{Ad}_{h}\partial_t\eta_b - \operatorname{Ad}_{h}^*\partial_x\lambda - \operatorname{ad}^*_{\eta_b}\operatorname{Ad}_{h}^*I\operatorname{Ad}_{h}\eta_b + \operatorname{Ad}_{h}^*\operatorname{ad}_{\xi_0}^*\lambda = 0.
\]
Integrating this equation w.r.t $x$ and performing an integration by parts on the term $ \operatorname{Ad}_{h}^*\partial_x\lambda$, we obtain
\begin{equation}
\label{eq:redction rig_body_5}
\mathcal{M}\partial_t\eta_b + \int_0^1 (\partial_x \operatorname{Ad}_{h}^*) \lambda dx
-\big[\operatorname{Ad}_{h}^*\lambda \big]_{x=0}^{x=1} - \operatorname{ad}^*_{\eta_b}\mathcal{M}\eta_b
+\int^1_0\operatorname{Ad}_{h}^*\operatorname{ad}_{\xi_0}^*\lambda dx = 0,
\end{equation}
where (recall that $h(t,x)=h(0,x)$ for all $t,x$)
\[
\mathcal{M}:=\int_0^1\operatorname{Ad}_{h(0,x)}^*I(x)\operatorname{Ad}_{h(0,x)} dx
\]
is a constant
symmetric positive definite matrix.
Finally, since $\lambda(t,0)=\lambda(t,1)=0$ by \eqref{eq:rigid_body:lambda_bdry_conditions:1}
and since $\partial_x \operatorname{Ad}^*_{h}=-\operatorname{Ad}_h^* \operatorname{ad}^*_{\xi_0}$ by \eqref{eq:redction rig_body_2},
the terms containing the Lagrange multipliers $\lambda$ cancel out
and we are left with the expected rigid body dynamics (see \cite{marsden1999introduction}-\cite{poincare1901forme})
\begin{equation}\label{eq:app:E-P:rigid_body:1}
\mathcal{M}\partial_t\eta_b -\operatorname{ad}^*_{\eta_b}\mathcal{M}\eta_b = 0
\quad \textrm{on $[0,T]$.}
\end{equation}

\subsection{Follow the leader}\label{Computation Follow the leader}
We first provide a technical result giving an equivalent formulation of the follow-the-leader constraint~\eqref{eq:Follow-The-Leader Constraint}.
We then derive equations that depend solely on the dynamics of the snake's head, starting from the Cosserat-Poincar\'e equation.

\subsubsection[Lemma (Follow-the-leader)]{On the constraints for the Follow the leader model}

\begin{lemma}\label{lemma:Follow-the-leader}
    Let $g\in C^{2,2}_{\SE(2)}$ be movement with  corresponding position $p\in C^{2,2}_{\mathbb{R}^2}$. For every $(t,x)\in [0,T]\times [0,1]$, the following two assertions are equivalent.
    \begin{itemize}
        \item the vectors $\partial_t p(t,x) $ and $\partial_x p(t,x)$ are collinear.
        \item The following equation is satisfied
        \begin{equation} \big(P_2\xi_g(t,x)\big)^T\mathrm{Q}P_2\eta_g(t,x) =0,
        \end{equation}
        where $\xi_g,\eta_g$ are the twists defined in \eqref{def:twistSE(k)}, $\mathrm{Q}$ is the rotation matrix of angle $\frac{\pi}{2}$ and $P_2=\begin{pmatrix}
            0_{2\times1} & I_2
        \end{pmatrix}$.
    \end{itemize}
\end{lemma}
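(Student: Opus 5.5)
The plan is to express both twists' translational parts in terms of $\partial_t p$ and $\partial_x p$, and then use the fact that in $\mathbb{R}^2$ two vectors are collinear exactly when their cross product vanishes, which $\mathrm{Q}$ encodes. The proof is a direct computation at a fixed point $(t,x)$.

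First I unpack the twists for $g=\begin{pmatrix} R & p\\ 0&1\end{pmatrix}\in \SE(2)$. Since $g^{-1}=\begin{pmatrix} R^T & -R^Tp\\ 0&1\end{pmatrix}$, a direct multiplication gives
\[
g^{-1}\partial_t g=\begin{pmatrix} R^T\partial_tR & R^T\partial_t p\\ 0&0\end{pmatrix},\qquad
g^{-1}\partial_x g=\begin{pmatrix} R^T\partial_xR & R^T\partial_x p\\ 0&0\end{pmatrix}.
\]
Through the isomorphism $\mc{I}_2$ of \eqref{def : Lie algebra isomorphism}, and using that $P_2$ extracts the translational part (as recalled in Section~\ref{sec:definitions_and_notations}), this yields
\[
P_2\eta_g=V=R^T\partial_t p,\qquad P_2\xi_g=\Gamma=R^T\partial_x p,
\]
which matches Remark~\ref{ex:formese(k)}.

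Next I substitute these into the bilinear expression:
\[
(P_2\xi_g)^T\mathrm{Q}P_2\eta_g=(\partial_xp)^T R\,\mathrm{Q}\,R^T\partial_t p .
\]
Because $R\in\SO(2)$ is a rotation and $\mathrm{Q}$ is the rotation by $\pi/2$, and planar rotations commute, we have $R\mathrm{Q}R^T=\mathrm{Q}$. The expression therefore reduces to
\[
(\partial_x p)^T\mathrm{Q}\,\partial_t p=a_1b_2-a_2b_1,
\]
where $a=\partial_x p$ and $b=\partial_t p$. This is the $2\times 2$ determinant $\det(a,b)$.

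The conclusion is the standard fact that $\det(a,b)=0$ if and only if $a$ and $b$ are linearly dependent, i.e.\ collinear. This includes the degenerate case where one of the vectors is zero, and I should state explicitly that collinearity is understood as linear dependence so that this case is covered. There is no real obstacle; the only points needing care are the commutation $R\mathrm{Q}R^T=\mathrm{Q}$ (which holds because $\SO(2)$ is abelian and $\mathrm{Q}\in\SO(2)$) and the zero-vector convention.
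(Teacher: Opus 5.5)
Your proof is correct and follows essentially the same route as the paper. The paper notes $P_2\xi_g=\Gamma$, $P_2\eta_g=V$ and uses that collinearity in $\mathbb{R}^2$ is equivalent to $x^T\mathrm{Q}y=0$; your identity $R\mathrm{Q}R^T=\mathrm{Q}$ just makes explicit the step, left implicit in the paper, of passing from $(\Gamma,V)=(R^T\partial_x p,R^T\partial_t p)$ back to $(\partial_x p,\partial_t p)$. One harmless slip is that with the paper's $\mathrm{Q}$ one actually gets $a^T\mathrm{Q}b=a_2b_1-a_1b_2=-\det(a,b)$, which does not affect the vanishing criterion.
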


\begin{proof}
The result is immediate after first noticing that $P_2\xi=\Gamma$ and $P_2\eta=V$ and then that collinearity of two vectors $x,y$ in $\mathbb{R}^2$
is equivalent to $x^T \mathrm{Q} y=0$.
\end{proof}

\subsubsection{Derivation of the Cosserat Poincaré equation under follow-the-leader constraint}

The planar Follow the leader model aims at describing the planar motion of a snake moving along the of its backbone: if the snake is moving forward, the motion of the head dictates the trajectory of the entire body, as the rest of the snake follows the path of the head. Within this framework, our objective is to reduce the Cosserat-Poincaré equation to an ordinary differential equation.
To achieve this reduction, we take $g\in C^{2,2}_{\SE(2)}$ representing the snake, with the head and tail respectively located at $x=0$ and $x=1$. We assume that the beam at rest satisfies
$\Gamma_0 = \begin{pmatrix} 1 & 0 \end{pmatrix}^T$ and the inertia and elasticity matrices can be written as $I=\operatorname{diag}(j_1,j_2,j_2)$ and $H=\operatorname{diag}(h_1,h_2,h_3)$, where $j_1,j_2,h_1,h_2,h_3$ are positive constants.
We also merge the Kirchhoff constraint \eqref{eq:Kirchhoff constraint} in dimension $2$, denoted $\mathcal{C}_K$, and the follow-the-leader constraint \eqref{eq:Follow-The-Leader Constraint}, denoted $\mathcal{C}_f$, into a single distributed constraint $\mathcal{C}$ \begin{equation}
\label{eq:Follow the leader constraint + Kirchhoff}
    \mathcal{C}(t,\xi,\eta,\partial_x\eta) =\begin{pmatrix}
\mathcal{C}_K(t,\xi,\eta,\partial_x\eta)\\ \mathcal{C}_{\mathrm{f}}(t,\xi,\eta,\partial_x\eta)
    \end{pmatrix} =\begin{pmatrix}
        P_2\operatorname{ad}_\xi \\ (P_2\xi)^T\mathrm{Q}P_2
    \end{pmatrix}\eta +  \begin{pmatrix}
        P_2 \\ 0_{1 \times 3}
    \end{pmatrix}\partial_x\eta.
\end{equation}
Last, we assume that the snake moves forward along its backbone, meaning the position $p$ satisfies \begin{equation}\label{eq:moving forward}
    \partial_x p(t,\cdot)^T \partial_tp(t,\cdot) >0,\quad \forall t \in [0,T],
\end{equation} and we adapt the variational problem \eqref{eq:Hamilton equation} by replacing \( \mathscr{C}^2_0 \) with the functional space
\begin{equation}
\label{def:new variational space}
\widetilde{\mathscr{C}}^2_0 := \{ v \in C^{2,2}_\mathfrak{g} \mid v(0, \cdot) = v(T, \cdot) = v(\cdot, 0) = 0 \}.
\end{equation}
This variational problem only gives one boundary condition at the head. Thus it is adapted to the assumption \eqref{eq:moving forward} as the boundary condition at the tail no longer needs to be given : the tail follows the trajectory traced by the initial pose and the head.
Let us now suppose that the snake is submitted to a punctual force \(F_1 \in \mathscr{C}^1([0,T];\mathbb{R}) \) acting on the head and the constraint \eqref{eq:Follow the leader constraint + Kirchhoff}.
Given initial conditions $(\eta^0, \xi^0):[0,1]\rightarrow \mathfrak{g}^2$, the twists \( \eta, \xi \in C^{1,1}_{\mathbb{R}^3} \) and the reaction force $ \lambda \in C^{1,1}_{\mathbb{R}^3}$ satisfy the following system of partial differential equations on $[0,T]\times [0,1]$
\begin{subequations}
    \begin{empheq}[left=\empheqlbrace]{align}
    &\label{eq:dynamic follow the leader} I \partial_t \eta - \partial_x \Lambda  - \operatorname{ad}_{\eta}^* I \eta + \operatorname{ad}_{\xi}^* \Lambda  + \mathcal{C}_1^* \lambda - \partial_x \big( \mathcal{C}_2^* \lambda \big) = 0, \\
    &\label{eq :structure follow the leader} \partial_x \eta - \partial_t \xi = [\eta, \xi],  \\
    &\label{constraint Lagrangian exemple 1} \mathcal{C}(t, \xi, \eta, \partial_x \eta) = 0,  \\
    &\label{eq:boundary constraint follow the leader} \Lambda(t,1) = - \mathcal{C}_2^* \lambda(t,1) + F_1(t), \\
    &(\eta(0,x), \xi(0,x)) = (\eta^0(x), \xi^0(x)),
    \end{empheq}
\end{subequations}
with
$$
\Lambda(t,x) =H(x)\big(\xi(t,x)-\xi_0(x)\big),\  \mathcal{C}_1(t,\xi) := \begin{pmatrix}
    P_2\operatorname{ad}_\xi \\ (P_2\xi)^T\mathrm{Q}P_2
\end{pmatrix},\  \mathcal{C}_2(t,\xi) := \begin{pmatrix}
    P_2 \\ 0_{1 \times 3}
\end{pmatrix},\ F_1 = \begin{pmatrix} m_{act} \\ f_{act,1} \\ f_{act,2} \end{pmatrix}.
$$
Using \eqref{constraint Lagrangian exemple 1}, i.e., $\mathcal{C}(t,\xi,\eta,\partial_x\eta)=0$,
implies that the twists $\eta,\xi$ satisfy, for every $(t,x)\in[0,T]\times[0,1]$,
\begin{equation}\label{eq : follow the leader contraint Kirchhoff}
    P_2 \operatorname{ad}_\xi(t,x) \eta(t,x) + P_2 \partial_x\eta = 0,
\end{equation}
and \begin{equation}\label{eq : follow the leader constraint Follow the leader}
    (P_2\xi(t,x))^T \mathrm{Q} P_2 \eta(t,x) =0.
\end{equation}
The structure equation \eqref{eq :structure follow the leader} simplifies \eqref{eq : follow the leader contraint Kirchhoff} into $P_2\partial_t \xi = 0$ or equivalently $\partial_t \Gamma=0$.
If we suppose that the linear space rate at initial condition is at rest, i.e., $\Gamma^0=\Gamma_0$, then \eqref{eq : follow the leader contraint Kirchhoff} can be reduced to
\begin{equation}
\label{eq:Gamma follow the leader}
    \Gamma\equiv \Gamma_0=\begin{pmatrix}
        1\\0
    \end{pmatrix}.
\end{equation}
Moreover, \eqref{eq : follow the leader constraint Follow the leader} ensures that $V$ and $\Gamma$ are collinear, which implies that the second component $V_2$ of $V$ is equal to zero.
In this case, the structure equation \eqref{eq :structure follow the leader} case be written on $[0,T]\times[0,1]$
\begin{subequations}
\label{eq: structure equation follow the leader coord non simplifiée}
    \begin{empheq}[left=\empheqlbrace]{align}
    &\partial_x \Omega = \partial_t K, \\
    &\partial_x V_1 -\partial_t \Gamma_1= K V_2 - \Omega \Gamma_2 , \\
    &\partial_x V_2 - \partial_t \Gamma_2 = \Omega \Gamma_1 - K V_1.
    \end{empheq}
\end{subequations}
where $\Omega$, $\Gamma=\begin{pmatrix}
    \Gamma_1 & \Gamma_2
\end{pmatrix}^T$, $K$ are respectively the angular time and space  \eqref{def:twistSE(k)}. Using equations \eqref{eq : follow the leader constraint Follow the leader} and \eqref{eq:Gamma follow the leader}, the structure equation \eqref{eq: structure equation follow the leader coord non simplifiée} simplifies on $[0,T]\times[0,1]$ to
\begin{subequations}
\label{eq: structure equation follow the leader coord simplifiée}
    \begin{empheq}[left=\empheqlbrace]{align}
    &\partial_x \Omega = \partial_t K, \\
    &\partial_x V_1 = 0,  \\
    &\Omega - K V_1 = 0.
    \label{eq:ftl:1:c}
    \end{empheq}
\end{subequations}
Equation \eqref{eq: structure equation follow the leader coord simplifiée} have been obtained first in \cite{6072271} to study the kinematic of 2D snakes in lateral undulation. In particular, \eqref{eq: structure equation follow the leader coord simplifiée} implies, on  $[0,T] \times [0,1]$, that $V_1(t,x) = V_1(t,1)$ and that
   \( K \) is fully described by the velocity \( V_1(t,1) \), the boundary conditions and initial conditions as it is the solution of the transport equation
\begin{subequations}
\label{eq:transport angular part}
    \begin{empheq}[left=\empheqlbrace]{align}
    &\partial_t K(t,x) - V_1(t,1) \partial_x K(t,x) = 0, \\
    &K(t,1) = K_0(1) + \frac{m_{act}(t)}{h_1}, \quad K(0,x) = K^0(x).
    \label{eq:transport angular part:b}
    \end{empheq}
\end{subequations}
In the above, the boundary condition comes from the projection of the boundary conditions \eqref{eq:boundary constraint follow the leader} on the angular space rate component.
Note that the transport equation \eqref{eq:transport angular part} is well posed in $C^1([0,T]\times[0,1];\mathbb{R})$ as we assume the snake to move forward $V_1=\Gamma^TV=\partial_x p ^T \partial_t p>0$.
Let us now rewrite \eqref{eq:dynamic follow the leader} and \eqref{eq:boundary constraint follow the leader} using the simplifications due to the constraint and the kinematics on $[0,T]\times [0,1]$
\begin{subequations}
\label{eq:dynamic follow the leader coordonné}
    \begin{empheq}[left=\empheqlbrace]{align}
    &\label{eq:dynamic follow the leader coordonné 1} j_1 \partial_t V_1 K + j_1( V_1)^2 \partial_x K - h_1 \partial_x (K - K_0) - \lambda_{2} = 0,  \\
    &\label{eq:dynamic follow the leader coordonné 2} j_2 \partial_t V_1 - \partial_x \lambda_{1} - K \lambda_{2} = 0, \\
    &\label{eq:dynamic follow the leader coordonné 3} \partial_x \lambda_{k,2} - j_2 ( V_1)^2 K - \lambda_3 = 0,\\
    &\lambda_{2}(t,1) = f_{act,2}(t),\quad \lambda_{1}(t,1) = f_{act,1}(t),\label{eq:boundary1}\\
    &K(t,1) = K_0(1) +\frac{m_{act}(t)}{h_1},\label{eq:boundary2}
    \end{empheq}
\end{subequations}
where \( \lambda = \begin{pmatrix} \lambda_{1} & \lambda_{2} & \lambda_3 \end{pmatrix}^T \in \mathbb{R}^3 \). In particular, \eqref{eq:dynamic follow the leader coordonné 1} at section \( x = 1 \) is an ordinary differential equation
\begin{equation}
    j_1 \partial_t V_1(t,1) K(t,1) + j_1 V_1(t,1)^2 \partial_x K(t,1) - h_1 \partial_x (K(t,1) - K_0(1)) - \lambda_{k,2}(t,1) =0.
\end{equation}
Consider now the transport equation \eqref{eq:transport angular part} evaluated at $x=1$. Using \eqref{eq:boundary1} and \eqref{eq:boundary2} and the change of variable $w(t)=j_1V_1(t,1) \big( K_0(1) + \frac{m_{act}(t)}{h_1}\big)$, it reduces to
\begin{equation}
\begin{cases}
\label{eq : velocity}
    w'(t) = U, \\
    w(0)= j_1V^0_1(K_0(1) + \frac{m_{act}(0)}{h_1}),
    \end{cases}
\end{equation}
where $V^0_1$ is the initial velocity $V_1$ and the function
\[
U=-(h_1\partial_xK_0(1)+f_{act,2}(t)) +\frac{j_1m'_{act}(t)(j_1K_0(1)+\frac{j_1}{h_1}m_{act}(t))}{h_1w(t)},
\]
stands for a control input. Hence $V_1=V_1(t,1)$
is completely determined by the actuation. Once it is
is determined, we recover $\lambda_2$ by \eqref{eq:dynamic follow the leader coordonné 1}, then $\lambda_3$ by \eqref{eq:dynamic follow the leader coordonné 3}, and finally $\lambda_1$ by integrating \eqref{eq:dynamic follow the leader coordonné 2} and the boundary condition \eqref{eq:boundary1}.

Recalling the position of the snake $p\in C^{2,2}_{\mathbb{R}^2}$ and the deformation of the snake $R\in C^{2,2}_{\SO(2)}$ satisfy the equations
$\partial_t R=\Omega RQ$, $\partial_t p=RV$,
$\partial_x R=KRQ$ and $\partial_x p=R\Gamma$,
and using \eqref{eq:Gamma follow the leader} and the collinearity of $V$ and $\Gamma_1$, we deduce that
\begin{equation}
\partial_t p(t,x)=R(t,x)V(t,x)=R(t,x)\qmatrix{V_1(t,1) \\ 0}
=V_1(t,1)R(t,x)\Gamma
=V_1(t,1)\partial_x p(t,x),
\end{equation}
and
\begin{equation}
\partial_t R(t,x)
=\Omega(t,x)R(t,x)Q=K(t,x)V_1(t,1)R(t,x)Q
=V_1(t,1)\partial_x R(t,x).
\end{equation}
This concludes the derivation of the dynamics presented in Example~\ref{ex:followtheleader}.

We next show that the head's position $p(\cdot,1)$ and head's deformation $R(\cdot,1)$ can track any regular reference curve in the plane $X : [0,T]\rightarrow\mathbb{R}^2$. To that end, let us define the reference linear velocity by
\begin{equation}
\sigma_r(t):=\Vert X'(t) \Vert,
\end{equation} and the reference angular velocity $\Omega_r$ by
(recall that $\Omega Q=R^{-1}\partial_t R$)
\begin{equation}\label{eq:ref_Omega:1}
\Omega_r(t)\mathrm{Q}:=R_r(t)^{-1}R_r'(t),
\end{equation}
where $R_r(t)=\begin{pmatrix}
    \frac{X'(t)}{\Vert X'(t) \Vert} & Q\frac{X'(t)}{\Vert X'(t) \Vert}
\end{pmatrix}$. Then by choosing as the moment $m_{act}$ and the force $f_{act,2}$ the functions
\begin{equation}
    m_{act}(t):=h_1\big(\frac{\Omega_r(t)}{\sigma_r(t)}-K_0(1)\big),
\end{equation}
and
\begin{equation}
        f_{act,2}(t):= -j_1\partial_t \sigma_r(t)(K_0(1)+\frac{m_{act}(t)}{h_1}) +j_1 \sigma_r(t)\frac{m'_{act}(t)}{h_1}   + \frac{m'_{act}(t)}{\sigma_r(t)} -h_1\partial_x K_0(1) ,
\end{equation}
we have that
$V_1(t,1)=\sigma_r(t)$ i.e.
$V(t,1)=\begin{pmatrix}
    \sigma_r(t) \\ 0
\end{pmatrix}$
and then from \eqref{eq:ftl:1:c} and \eqref{eq:transport angular part:b} that
$\Omega(t,1)=K(t,1)\sigma_r(t)=\Omega_r(t)$ for any time $t\in[0,T]$.
If the initial condition of head's position
and orientation satisfy
\begin{equation}
p(0,1)=X(0), \quad R(0,1)=\begin{pmatrix}
    \frac{X'(0)}{\Vert X'(0) \Vert} & Q\frac{X'(0)}{\Vert X'(0) \Vert}
\end{pmatrix}
\end{equation}
then, as will be shown below, we have the expected property :
\begin{equation}
\label{eq:condition control follow the leader}
    p(t,1)=X(t)
    \quad\textrm{and}\quad
    R(t,1)=R_r(t)
    \quad \forall t \in [0,T].
\end{equation}

First, by definition, the function $R_r$ satisfies $R_r'(t)=\Omega_r(t)R_r(t)Q$ for all $t\in [0,T]$.
On the other hand, since $\Omega(t,1)=\Omega_r(t)$ for all $t$, we have
$\dif{t} R(t,1)=\Omega(t,1)R(t,1)Q=\Omega_r(t)R(t,1)Q$ for all $t$
with $R(0,1)=R_r(0)$.
Thus $R_r$ and $R(\cdot,1)$ solve the same initial value problem
and therefore
\[
R(t,1)=R_r(t)\quad \forall t\in [0,T]
\]
which proves the second identity in \eqref{eq:condition control follow the leader}.
Consequently for $p$ we have
\[
\dif{t} p(t,1)=R(t,1)V(t,1)
=R_r(t)\qmatrix{\sigma_r(t) \\ 0}
=\frac{X'(t)}{\n{X'(t)}} \n{X'(t)}
=X'(t)
\]
for all $t\in [0,T]$
and hence $p(t,1)=X(t)$
because $p(0,1)=X(0)$.
This proves the first identity in \eqref{eq:condition control follow the leader}.

\subsection{Noether's Theorem and Energy Conservation}

Assume that the Lagrangian density $L$, Eq. \eqref{def: Cas particulier Lagrangien density}, does not depend explicitly on time $t$.
That is
\[
\wt{\mc{L}}=\wt{\mc{L}}(x,g,\partial_x g,\partial_t g)
:=\mc{L}(x,g,g^{-1}\partial_x g,g^{-1}\partial_t g).
\]
Assume, moreover, that the Lagrangian is of the form
(see Section \ref{left-invariant Lagrangian section})
\[
\wt{\mc{L}}=\mc{L}
={}& \mathcal{T}_I(\eta) - \mathcal{U}_{H,\xi_0}(\xi)
=\frac{1}{2}\la I\eta,\eta\ra-\frac{1}{2}\la H(\xi-\xi_0),\xi-\xi_0\ra \\
={}& \frac{1}{2}\la Ig^{-1}\partial_t g,g^{-1}\partial_t g\ra-\frac{1}{2}\la H(g^{-1}\partial_x g-\xi_0),g^{-1}\partial_x g-\xi_0\ra
\]
and that we have no constraints and no (external) forces acting on the system.

In notations of \cite{edvardsson:2016},
we have
$x^\mu=(t,x)$, $\phi=g$, symmetry with $\delta x^{\mu}=(1,0)$, $\delta \phi(x^\mu)=0$
(in \cite{edvardsson:2016}: $\mu=0$ corresponds to time $t$
and $\mu=1$ to $x$).
Therefore, in Eq. (4.26) of \cite{edvardsson:2016}
(see also Example 6.1 therein),
we have $\epsilon=\epsilon_1\in\R$,
$\psi_{ik}=0$, $\xi^\mu_1=(1,0)$
and $\lambda^\mu_k=0$,
so that (note: $\phi_{,0}=\partial_t g$)
\[
j^0=j^0_1=\frac{\partial \wt{\mc{L}}}{\partial \phi_{,0}}\phi_{,0}-\wt{\mc{L}}
=\frac{\partial \wt{\mc{L}}}{\partial (\partial_t g)}(\partial_t g)-\wt{\mc{L}}
=\frac{\partial \mc{L}}{\partial \eta}\eta-\mc{L}
=\la I\eta,\eta\ra-\mc{L}
=\mathcal{T}_I(\eta) + \mathcal{U}_{H,\xi_0}(\xi)
\]
while (note: $\phi_{,1}=\partial_x g$)
\[
j^1=j^1_1
=\frac{\partial \wt{\mc{L}}}{\partial \phi_{,1}}\phi_{,1}-\delta^1_0\mc{L}
=\frac{\partial \wt{\mc{L}}}{\partial (\partial_x g)}(\partial_x g)
=\frac{\partial \mc{L}}{\partial \xi}\xi
=-\la \Lambda(t,x),\eta(t,x)\ra
\]
so that
\[
0=\partial_0 j^0+\partial_1 j^1=\partial_t j^0(t,x)+\partial_x j^1(t,x)
\]
and hence
\[
\dif{t} E(t)=\dif{t} \int_0^1 j^0(t,x) dx=-\int_0^1 \dif{x} j^1(t,x) dx
=j^1(t,1)-j^1(t,0)
=0
\]
because $\Lambda(t,0)=\Lambda(t,1)=0$
and hence $j^1(t,0)=j^1(t,1)=0$
since $F^\pm=0$ and $\mc{C}=0$ by assumption.

\subsection{Spaces $\mathscr{C}^k([a,b];\mathscr{C}^l([c,d];B))$}\label{sec:app:C^k(C^l)}

In this section, we will study the structure of the iterated
continuously differentiable spaces
$C^k([a,b];C^l([c,d];B))$ with $B=(B,\n{\cdot})$ a real Banach space with $k,l$ non-negative integers and
$a<b$, $c<d$.
For sets $X,Y$, we write below $Y^X$ for the set of all functions $X\to Y$.

Let $X,Y,Z$ be sets.
For $f\in (Z^Y)^X$ we define
\[
F_f\in Z^{X\times Y};\quad
F_f(x,y):=f(x)(y),\quad x\in X,\ y\in Y.
\]
and for $F\in Z^{X\times Z}$ define
\[
f^F\in (Z^Y)^X;
\quad f^F(x)(y):=F(x,y),\quad x\in X,\ y\in Y.
\]
We also write
\[
F_x\in Z^Y;\quad F_x(y):=F(x,y),\quad x\in X,\ y\in Y
\]
so that $f^F(x)=F(x,\cdot)=F_x$ for $x\in X$.
The maps $f\mapsto F_f$ and $F\mapsto f^F$ are inverses to each other: $f^{F_f}(t)(x)=F_f(t,x)=f(t)(x)$
and $F_{f^F}(t,x)=f^F(t)(x)=F(t,x)$ for every $t\in T$, $x\in X$
and $f\in (Z^Y)^X$, $F\in Z^{X\times Y}$.

Let $\tau$ be the map
\[
\tau:(Z^Y)^X\to (Z^X)^Y;
\quad \tau(f)(y)(x):=f(x)(y),
\quad x\in X,\ y\in Y
\]
Clearly $\tau$ is a bijection from $(Z^Y)^X$ onto $(Z^X)^Y$
and $\tau\circ\tau=\id$ i.e. $\tau^{-1}=\tau$.
Since then $F_{\tau(f)}:Y\times X\to Z$ with
\[
F_{\tau(f)}(y,x)=\tau(f)(y)(x)=f(x)(y)=F_f(x,y),
\]
we also write $\tau$ for the map
\[
\tau:Z^{X\times Y}\to Z^{Y\times X};
\quad
\tau(F)(y,x):=F(x,y).
\]
Then $F_{\tau(f)}=\tau(F_f)$ for every $f\in (Z^Y)^X$
and likewise $\tau(f^F)=f^{\tau(F)}$ for every $F\in Z^{X\times Y}$.

Let $X,Y$ be compact metric spaces and $Z$ a metric space.
Endow $X\times Y$ with the metric $d=d_{X\times Y}$ given by
\[
d((x,y),(x',y')):=d_X(x,x')+d_Y(y,y'),\quad x,x'\in X,\ y,y'\in Y.
\]
Then $X\times Y$ is a compact metric space.
We endow $\mathscr{C}(Y,Z)$ with the metric $D=D_{\mathscr{C}(Y,Z)}$ given by
\[
D(f,g):=\sup_{y\in Y} d_Z(f(y),g(y))
=\max_{y\in Y} d_Z(f(y),g(y)),
\quad f,g\in \mathscr{C}(Y,Z).
\]

Then the following holds
(see e.g. \cite[Proposition A.16]{hatcher:2001} (p. 531)):

\begin{theorem}\label{th:C^0(C^0):1}
If $X,Y$ are \emph{compact} metric spaces and $Z$ is a metric space,
then the map $f\mapsto F_f$ yields
a bijection
\[
\mathscr{C}(X;\mathscr{C}(Y;Z))\cong \mathscr{C}(X\times Y;Z).
\]
\end{theorem}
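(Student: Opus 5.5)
The goal is the classical exponential law $\mathscr{C}(X;\mathscr{C}(Y;Z))\cong\mathscr{C}(X\times Y;Z)$ via $f\mapsto F_f$, with $\mathscr{C}(Y;Z)$ carrying the sup metric $D$. The maps $f\mapsto F_f$ and $F\mapsto f^F$ are already known to be mutually inverse bijections between $(Z^Y)^X$ and $Z^{X\times Y}$. So it suffices to check two things: $F_f$ is continuous whenever $f$ is continuous, and $f^F$ is continuous (with values in $\mathscr{C}(Y;Z)$) whenever $F$ is continuous.

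\textbf{Direction $f\mapsto F_f$.} Let $f\in\mathscr{C}(X;\mathscr{C}(Y;Z))$ and $(x,y),(x',y')\in X\times Y$. By the triangle inequality,
\[
d_Z(F_f(x',y'),F_f(x,y))\le D(f(x'),f(x))+d_Z(f(x)(y'),f(x)(y)).
\]
The first term is small when $x'$ is close to $x$, since $f$ is continuous. The second is small when $y'$ is close to $y$, since $f(x)$ is continuous. Hence $F_f$ is continuous at $(x,y)$. Compactness is not needed in this direction.

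\textbf{Direction $F\mapsto f^F$.} Let $F\in\mathscr{C}(X\times Y;Z)$.

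First, each slice $F_x=F(x,\cdot)$ is continuous, being the composition of $F$ with the continuous map $y\mapsto(x,y)$. So $f^F$ does take values in $\mathscr{C}(Y;Z)$.

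Next, we need continuity of $x\mapsto F_x$ for the metric $D$. This is the key step, and it is where compactness enters. Since $X\times Y$ is a compact metric space, $F$ is uniformly continuous. Given $\varepsilon>0$, choose $\delta>0$ such that $d((x,y),(x',y'))<\delta$ implies $d_Z(F(x,y),F(x',y'))<\varepsilon$. Then for $d_X(x,x')<\delta$, every $y\in Y$ gives $d((x,y),(x',y))=d_X(x,x')<\delta$. Hence
\[
D(F_x,F_{x'})=\max_{y\in Y}d_Z(F(x,y),F(x',y))<\varepsilon .
\]
The maximum is attained because $Y$ is compact. So $f^F$ is continuous, indeed uniformly continuous.

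\textbf{Main obstacle.} The only real point is the uniform-in-$y$ estimate in the second direction. Pointwise continuity of $F$ would not control the supremum over $y$; uniform continuity on the compact product is what does. (Alternatively, one could use a tube-lemma argument with compactness of $Y$ alone.)

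Combining the two directions with the already established bijectivity of $f\mapsto F_f$ gives the claimed bijection.
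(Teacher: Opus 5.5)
Your proof is correct and complete. The paper, however, gives no proof of its own: it refers to Hatcher's \emph{Algebraic Topology} (Proposition A.16), i.e.\ the exponential law for the compact-open topology.

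Going that way, one equips $\mathscr{C}(Y;Z)$ with the compact-open topology and uses that, for locally compact $Y$, the correspondence $F\leftrightarrow f^F$ preserves continuity. One then needs a separate fact to reach the statement the paper actually uses: for compact $Y$ and metric $Z$, the compact-open topology coincides with the topology of the sup metric $D$.

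You bypass the compact-open topology entirely. The triangle-inequality estimate handles $f\mapsto F_f$, and uniform continuity of $F$ on the compact product $X\times Y$ handles $F\mapsto f^F$. As a bonus, this shows that $f^F$ is uniformly continuous.

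What each route buys:
\begin{itemize}
\item The citation gives generality: arbitrary topological $X$, merely locally compact $Y$, and under mild separation hypotheses even a homeomorphism of function spaces.
\item Your argument is short, self-contained, and phrased directly in the metrics $D$ and $\max_{X\times Y} d_Z$ of Remark~\ref{re:C^0(C^0):isometry:1}, so no identification of topologies is required.
\end{itemize}

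One small refinement to your side remarks. The estimate in the first direction indeed does not use compactness, but compactness of $Y$ is what makes $D$ a finite metric at all. Your tube-lemma variant of the second direction needs only that compactness of $Y$. So compactness of $X$ is superfluous for the bijection itself; it matters only for the max-metric isometry in the subsequent remark.
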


\begin{remark}\label{re:C^0(C^0):isometry:1}
For every $f,g\in \mathscr{C}(X;\mathscr{C}(Y;Z))$, we have
\[
\max_{(x,y)\in X\times Y} d_Z(F_f(x,y),F_g(x,y))
=\max_{x\in X} \max_{y\in Y} d_Z(f(x)(y),g(x)(y))
=\max_{x\in X} D(f(x),f(y))
\]
and hence the bijection of
Theorem \ref{th:C^0(C^0):1}
becomes an \emph{isometric isomorphism}
if the space $\mscr{C}(X\times Y;Z)$
is equipped with the metric
$d_{\mscr{C}(X\times Y;Z)}(F,G):=\max_{(x,y)\in X\times Y} d_Z(F(x,y),G(x,y))$,
the space $\mscr{C}(Y;Z)$
is equipped with the metric $D$
and the space
$\mscr{C}(X;\mscr{C}(Y;Z))$
is equipped with the metric
$d_{\mscr{C}(X;\mscr{C}(Y;Z))}(f,g)=\max_{x\in X} D(f(x),g(x))$.
\end{remark}

\begin{definition}
For an open set $U\subset\R^n$
and a real Banach space $(E,\n{\cdot}_E)$,
we define $\mathscr{C}^k(\ol{U};E)$
to be the set of all Fr\'echet $\mathscr{C}^k$-functions $F:U\to E$
such that the partial derivatives $\partial^\alpha F:U\to E$
of all orders $|\alpha|\leq k$
extend to continuous maps $\wt{\partial^\alpha F}:\ol{U}\to E$ on the closure $\ol{U}$ of $U$.
As usual, we write $\mathscr{C}^k(\ol{U}):=\mathscr{C}^k(\ol{U};\R)$.
\end{definition}

If the domain $U$ in the previous definition is \emph{bounded}
so that $\ol{U}$ is \emph{compact},
then it is easy to see that $\mathscr{C}^k(\ol{U};E)$
is a Banach space when equipped with the norm
\[
\n{F}_{\mathscr{C}^k}=\n{F}_{\mathscr{C}^k(\ol{U};E)}
:=\sum_{|\alpha|\leq k} \sup_{x\in U} \n{\partial^\alpha F(x)}_E
=\sum_{|\alpha|\leq k} \max_{x\in\ol{U}} \n{\wt{\partial^\alpha F}(x)}_E
=\sum_{|\alpha|\leq k} \n{\partial^\alpha F(x)}_{\mathscr{C}^0(\ol{U};E)}
\]
for $F\in \mathscr{C}^k(\ol{U};E)$.

From now on, we let $T=[a,b]$, $X=[c,d]$ be compact intervals (for simplicity) and
let $(B,\n{\cdot})$ be a real Banach space.
Define the spaces
\begin{align}\label{eq:D^{1,0}:1}
D^{1,0}(T\times X;B):=\{F\in \mathscr{C}(T\times X;B)\ |\ \partial_t F\in \mathscr{C}(T\times X;B)\}
\end{align}
and
\begin{align}\label{eq:D^{0,1}:1}
{}& D^{0,1}(T\times X;B):=\{F\in \mathscr{C}(T\times X;B)\ |\ \partial_x F\in \mathscr{C}(T\times X;B)\}.
\end{align}
More precisely, $F\in D^{1,0}(T\times X;E)$
(resp. $F\in D^{0,1}(T\times X;B)$)
if $F\in \mathscr{C}(T\times X;B)$ and if $\partial_t F$ (resp. $\partial_x F$)
exists, is continuous on $]a,b[\,\times ]c,b[$
and extends continuously onto $X\times T$.

Let $f\in \mathscr{C}^1(T;\mathscr{C}^0(X;B))$ and write $F=f^F$.
Then $f\in \mathscr{C}^0(T;\mathscr{C}^0(X;B))=\mathscr{C}(T\times X;B)$
i.e. $F\in \mathscr{C}(T\times X;B)$ by Theorem \ref{th:C^0(C^0):1}.
Next, we have
\[
\n{\frac{F(t+h,x)-F(t,x)}{h}-(\partial_t f(t))(x)}
={}& \n{\frac{f(t+h)(x)-f(t)(x)}{h}-(\partial_t f(t))(x)} \\
\leq \n{\frac{f(t+h)-f(t)}{h}-\partial_t f(t)}_{\mathscr{C}^0}
\leq {}& \n{\frac{f(t+h)-f(t)}{h}-\partial_t f(t)}_{\mathscr{C}^1}
\longto_{h\to 0} 0
\]
and hence
\[
\exists\, \partial_t F(t,x)=(\partial_t f)(t)(x)
\quad\forall t\in T,\ x\in X.
\]
Moreover since $\partial_t f\in \mathscr{C}^0(T;\mathscr{C}^0(X;B))=\mathscr{C}(T\times X;B)$,
we have ${\partial_t F\in \mathscr{C}(T\times X;B)}$.
Thus $F\in D^{1,0}(T\times X;B)$.

Conversely, let $F\in D^{1,0}(T\times X;B)$
and write $f=f^F$.
Since $F\in \mathscr{C}(T\times X;B)$, we have $f\in \mathscr{C}^0(T;\mathscr{C}^0(X;B))$
by Theorem \ref{th:C^0(C^0):1}.
Next, since $\partial_t F\in \mathscr{C}(T\times X;B)$
and since $T\times X$ is compact,
the map $\partial_t F:T\times X\to B$
is uniformly continuous.
Let $\epsilon>0$. Then there exists $\delta>0$
s.t. $\n{\partial_t F(s,x)-\partial_t F(t,y)}<\epsilon$
whenever $\n{(s,x)-(t,y)}<\delta$.
Thus if $|h|<\delta$,
for any $t\in T$ and $s\in \big[t-|h|,t+|h|\big]\cap T$,
we have $\n{(s,x)-(t,x)}=|s-t|\leq |h|<\delta$.
Therefore, for any $t\in T$ and any $h\neq 0$
such that $|h|<\delta$ and $t+h\in T$, one has
\[
\n{\frac{f(t+h)-f(t)}{h}-\partial_t F(t,\cdot)}_{\mathscr{C}^0}
={}& \sup_{x\in X} \n{\frac{1}{h} \int_t^{t+h} \partial_t F(s,x) ds-\partial_t F(t,x)} \\
\leq {}& \sup_{x\in X} \frac{1}{|h|} \int_{[t-|h|,t+|h|]\cap T} \n{\partial_t F(s,x)-\partial_t F(t,x)} ds \\
\leq {}& \frac{1}{|h|} \int_{t-|h|}^{t+|h|}\epsilon ds
=2\epsilon.
\]
Thus
\[
\exists\, (\partial_t f)(t)=\partial_t F(t,\cdot)\quad\forall t\in T
\]
i.e. $\partial_t f=f^{\partial_t F}$.
Since $\partial_t F\in \mathscr{C}(T\times X;B)=\mathscr{C}^0(T;\mathscr{C}^0(X;B))$,
it then follows from Theorem \ref{th:C^0(C^0):1}
that $\partial_t f=f^{\partial_t F}\in \mathscr{C}^0(T;\mathscr{C}^0(X;B))$.
Hence $f\in \mathscr{C}^1(T;\mathscr{C}^0(X;B))$.

Thus we have shown the following:

\begin{theorem}\label{th:D^{1,0}:1}
The map $f\mapsto F_f$ yields an $\R$-linear isomorphism
\[
\mathscr{C}^1(T;\mathscr{C}^0(X;B))\cong D^{1,0}(T\times X;B).
\]
Moreover, for every $f\in \mathscr{C}^1(T;\mathscr{C}^0(X;B))$, it holds
\[
\partial_t F_f(t,\cdot)=(\partial_t f)(t)
\quad \forall t\in T
\]
i.e. $f^{\partial_t F_f}=\partial_t f$.
\end{theorem}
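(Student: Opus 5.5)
The statement to prove is Theorem~\ref{th:D^{1,0}:1}, and the argument carried out immediately before it already contains both directions. My plan is therefore to organize that computation into a proof and add the linearity and injectivity/surjectivity bookkeeping.

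The first step sets up the map. By Theorem~\ref{th:C^0(C^0):1}, $f\mapsto F_f$ is a bijection $\mathscr{C}^0(T;\mathscr{C}^0(X;B))\to\mathscr{C}(T\times X;B)$, with inverse $F\mapsto f^F$. It is clearly $\R$-linear, since $(F_{\alpha f+\beta g})(t,x)=\alpha f(t)(x)+\beta g(t)(x)$. Because $\mathscr{C}^1(T;\mathscr{C}^0(X;B))\subset \mathscr{C}^0(T;\mathscr{C}^0(X;B))$, it suffices to show two things: that this bijection maps $\mathscr{C}^1(T;\mathscr{C}^0(X;B))$ into $D^{1,0}(T\times X;B)$, and that its inverse maps $D^{1,0}$ back into $\mathscr{C}^1(T;\mathscr{C}^0(X;B))$. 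Injectivity is then inherited from Theorem~\ref{th:C^0(C^0):1}.

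The forward direction treats $f\in\mathscr{C}^1(T;\mathscr{C}^0(X;B))$ by pointwise evaluation. Evaluation at $x$ is a contraction from $\mathscr{C}^0(X;B)$ to $B$, so the difference quotient $\big(F_f(t+h,x)-F_f(t,x)\big)/h$ converges to $(\partial_t f)(t)(x)$. Hence $\partial_tF_f$ exists and equals $F_{\partial_t f}$. Since $\partial_t f$ is continuous into $\mathscr{C}^0(X;B)$, Theorem~\ref{th:C^0(C^0):1} makes $\partial_tF_f$ jointly continuous, so $F_f\in D^{1,0}$. This also gives the formula $f^{\partial_tF_f}=\partial_t f$.

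The converse direction is the main obstacle, because pointwise differentiability must be upgraded to differentiability in the sup norm. For $F\in D^{1,0}$ and $f=f^F$, I would write
\[
F(t+h,x)-F(t,x)=\int_t^{t+h}\partial_tF(s,x)\,ds,
\]
which is the fundamental theorem of calculus for a $B$-valued $\mathscr{C}^1$ function of one variable (applied on the interior and extended by continuity up to the endpoints). The uniform continuity of $\partial_tF$ on the compact set $T\times X$ then bounds $\sup_x\|(f(t+h)-f(t))/h-\partial_tF(t,\cdot)\|$ by $2\epsilon$ for small $|h|$. So $\partial_tf=f^{\partial_tF}$, which is continuous by Theorem~\ref{th:C^0(C^0):1}, and $f\in\mathscr{C}^1$. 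Combined with the first step, this gives the claimed linear isomorphism and the derivative identity.
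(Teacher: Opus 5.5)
Your proposal is correct and follows the paper's own argument essentially step for step. In the forward direction you bound the pointwise difference quotient by its $\mathscr{C}^0$-norm. In the converse you write the increment as $\int_t^{t+h}\partial_t F(s,x)\,ds$ and use uniform continuity of $\partial_t F$ on the compact set $T\times X$ to get the $2\epsilon$ bound, then obtain continuity of $\partial_t f=f^{\partial_t F}$ from Theorem~\ref{th:C^0(C^0):1}. Your remarks on linearity, on inheriting injectivity from Theorem~\ref{th:C^0(C^0):1}, and on justifying the integral identity up to the endpoints of $T$ only make explicit what the paper leaves implicit.
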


\begin{remark}
When the space $D^{1,0}(T\times X;B)$ is equipped with the norm
\[
\n{F}_{D^{1,0}}:=\n{F}_{\mathscr{C}(T\times X;B)}+\n{\partial_t F}_{\mathscr{C}(T\times X;B)},
\]
it becomes a Banach space
and the linear isomorphism of Theorem \ref{th:D^{1,0}:1} becomes an \emph{isometric isomorhism}.

Indeed, for any $f\in \mathscr{C}^1(T;\mathscr{C}^0(X;B))$,
the identity $f^{\partial_t F_f}=\partial_t f$
and Remark~\ref{re:C^0(C^0):isometry:1}
imply that
\[
\n{F_f}_{D^{1,0}}
={}& \n{F_f}_{\mathscr{C}(T\times X;B)}+\n{\partial_t F_f}_{\mathscr{C}(T\times X;B)}
=\n{f}_{\mathscr{C}^0(T;\mathscr{C}^0(X;B))}+\n{f^{\partial_t F_f}}_{\mathscr{C}^0(T;\mathscr{C}^0(X;B))} \\
={}& \n{f}_{\mathscr{C}^0(T;\mathscr{C}^0(X;B))}+\n{\partial_t f}_{\mathscr{C}^0(T;\mathscr{C}^0(X;B))}
=\n{f}_{\mathscr{C}^1(T;\mathscr{C}^0(X;B))}.
\]
\end{remark}

Let $f\in \mathscr{C}^0(T;\mathscr{C}^1(X;B))$ and write $F=F_f$.
Since
\[
\n{f(s)-f(t)}_{\mathscr{C}^0}\leq \n{f(s)-f(t)}_{\mathscr{C}^1}\longto_{s\to t} 0,
\]
we have $f\in \mathscr{C}^0(T;\mathscr{C}^0(X;B))=\mathscr{C}(T\times X;B)$
so that $F\in \mathscr{C}(T\times X;B)$.
On the other hand, because $f(t)\in \mathscr{C}^1(X;B)$, one has
\[
\frac{F(t,x+h)-F(t,x)}{h}
={}& \frac{f(t)(x+h)-f(t)(x)}{h}
\longto_{h\to 0} \partial_x(f(t))(x)
\]
and hence
\[
\exists\, \partial_x F(t,x)=\partial_x(f(t))(x)
\quad \forall t\in T,\ x\in X
\]
i.e. $f^{\partial_x F}(t)=(\partial_x F)(t,\cdot)=\partial_x(f(t))$ for all $t\in T$.
Moreover
\[
\n{f^{\partial_x F}(s)-f^{\partial_x F}(t)}_{\mathscr{C}^0}
=\n{\partial_x(f(s))-\partial_x(f(t))}_{\mathscr{C}^0}
\leq \n{f(s)-f(t)}_{\mathscr{C}^1}\longto_{s\to t} 0
\]
so that $f^{\partial_x F}\in \mathscr{C}^0(T;\mathscr{C}^0(X;B))=\mathscr{C}(T\times X;B)$
and therefore $\partial_x F\in \mathscr{C}(T\times X;B)$
by Theorem \ref{th:C^0(C^0):1}.
Thus $F\in D^{0,1}(T\times X;B)$.

Conversely, let $F\in D^{0,1}(T\times X;B)$ and write $f=f^F$
i.e. $f(t)=F(t,\cdot)$ for $t\in T$.
For every $t\in T$, we have $f(t)=F(t,\cdot)\in \mathscr{C}^1(X;B)$
and hence $f$ is a map $T\to \mathscr{C}^1(X;B)$.
In addition
\[
\frac{f(t)(x+h)-f(t)(x)}{h}=\frac{F(t,x+h)-F(t,x)}{h}
\longto_{h\to 0} \partial_x F(t,x)
\]
so that
\[
\exists\, \partial_x(f(t))(x)=\partial_x F(t,x)
\quad\forall t\in T,\ x\in X
\]
i.e. $\partial_x(f(t))=\partial_x F(t,\cdot)=f^{\partial_x F}(t)$ for all $t\in T$.
Finally, since $F,\partial_x F\in \mathscr{C}(T\times X;B)=\mathscr{C}^0(T;\mathscr{C}^0(X;B))$
by Theorem \ref{th:C^0(C^0):1},
we have $f=f^F\in \mathscr{C}^0(T;\mathscr{C}^0(X;B))$
and $f^{\partial_x F}\in \mathscr{C}^0(T;\mathscr{C}^0(X;B))$
and therefore
\[
\n{f(s)-f(t)}_{\mathscr{C}^1}
={}& \n{f(s)-f(t)}_{\mathscr{C}^0}+\n{\partial_x(f(s))-\partial_x(f(t))}_{\mathscr{C}^0} \\
={}& \n{f(s)-f(t)}_{\mathscr{C}^0}+\n{f^{\partial_x F}(s)-f^{\partial_x F}(t)}_{\mathscr{C}^0}
\longto_{s\to t} 0
\]
so that $f\in \mathscr{C}^0(T;\mathscr{C}^1(X;B))$.

We have thus shown the following:

\begin{theorem}\label{th:D^{0,1}:1}
The map $f\mapsto F_f$ yields an $\R$-linear isomorphism
\[
\mathscr{C}^0(T;\mathscr{C}^1(X;B))\cong D^{0,1}(T\times X;B).
\]
Moreover, for every $f\in \mathscr{C}^0(T;\mathscr{C}^1(X;B))$, it holds
\[
\partial_x F_f(t,x)=\partial_x(f(t))(x)
\quad \forall t\in T,\ x\in X
\]
i.e. $f^{\partial_x F}(t)=\partial_x(f(t))$ for all $t\in T$.
\end{theorem}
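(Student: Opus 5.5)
The plan is to prove the two inclusions separately. Each is handled by the same two-step pattern already used for Theorem~\ref{th:D^{1,0}:1}. First, Theorem~\ref{th:C^0(C^0):1} identifies the underlying continuous objects. Second, we check directly that the $x$-difference quotients converge and that the limits have the required continuity.

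\textbf{Step 1: $f\in\mathscr{C}^0(T;\mathscr{C}^1(X;B))$ implies $F_f\in D^{0,1}(T\times X;B)$.}
Let $f\in\mathscr{C}^0(T;\mathscr{C}^1(X;B))$ and set $F=F_f$.
\begin{itemize}
\item Since $\n{\cdot}_{\mathscr{C}^0}\le\n{\cdot}_{\mathscr{C}^1}$, the map $f$ is also continuous into $\mathscr{C}^0(X;B)$. Theorem~\ref{th:C^0(C^0):1} then gives $F\in\mathscr{C}(T\times X;B)$.
\item For fixed $t$, the function $f(t)$ is $\mathscr{C}^1$ in $x$. Hence the difference quotients $\frac{F(t,x+h)-F(t,x)}{h}$ converge to $\partial_x(f(t))(x)$. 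So $\partial_xF(t,x)$ exists and equals $\partial_x(f(t))(x)$.
\item To get joint continuity of $\partial_xF$, estimate
\[
\n{\partial_x(f(s))-\partial_x(f(t))}_{\mathscr{C}^0}\le\n{f(s)-f(t)}_{\mathscr{C}^1}.
\]
This shows $t\mapsto\partial_x(f(t))$ lies in $\mathscr{C}^0(T;\mathscr{C}^0(X;B))$. Applying Theorem~\ref{th:C^0(C^0):1} once more yields $\partial_xF\in\mathscr{C}(T\times X;B)$.
\end{itemize}

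\textbf{Step 2: $F\in D^{0,1}(T\times X;B)$ implies $f^F\in\mathscr{C}^0(T;\mathscr{C}^1(X;B))$.}
Let $F\in D^{0,1}(T\times X;B)$ and set $f=f^F$.
\begin{itemize}
\item Each slice $f(t)=F(t,\cdot)$ is $\mathscr{C}^1$ on $X$, with derivative $\partial_xF(t,\cdot)$. The continuity of $\partial_xF$ up to the boundary supplies the one-sided derivatives at the endpoints.
\item Both $F$ and $\partial_xF$ belong to $\mathscr{C}(T\times X;B)$. By Theorem~\ref{th:C^0(C^0):1}, both $t\mapsto F(t,\cdot)$ and $t\mapsto\partial_xF(t,\cdot)$ are continuous into $\mathscr{C}^0(X;B)$.
\item Summing the two $\mathscr{C}^0$ distances gives exactly $\n{f(s)-f(t)}_{\mathscr{C}^1}$, and this tends to $0$ as $s\to t$.
\end{itemize}

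\textbf{Conclusion.} The maps $f\mapsto F_f$ and $F\mapsto f^F$ are mutually inverse and obviously linear. Together with Steps 1 and 2 this gives the isomorphism, and the identity $\partial_xF_f(t,x)=\partial_x(f(t))(x)$ was established in Step 1.

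The only delicate point is the boundary behaviour in Step 2. The definition of $D^{0,1}$ requires only a continuous extension of $\partial_xF$ from the open rectangle. One must check that this extension actually coincides with the one-sided derivative of $F(t,\cdot)$ at $x=c$ and $x=d$. I would settle this with the mean value inequality: write $F(t,x+h)-F(t,x)=\int_x^{x+h}\partial_xF(t,y)\,dy$ for interior points, then pass to the limit using uniform continuity of the extension on the compact set $T\times X$.
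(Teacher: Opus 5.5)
Your proposal is correct and follows the paper's proof essentially line by line: in both directions you reduce the continuity questions to Theorem~\ref{th:C^0(C^0):1}, using the estimate $\n{\partial_x(f(s))-\partial_x(f(t))}_{\mathscr{C}^0}\le\n{f(s)-f(t)}_{\mathscr{C}^1}$ in one direction and splitting $\n{f(s)-f(t)}_{\mathscr{C}^1}$ into the two $\mathscr{C}^0$ distances in the other. Your extra remark, identifying the continuous extension of $\partial_xF$ with the genuine one-sided derivative of $F(t,\cdot)$ via $F(t,x+h)-F(t,x)=\int_x^{x+h}\partial_xF(t,y)\,dy$ and a limit, settles a point the paper passes over silently; the same limiting argument also covers the edge slices $t\in\{a,b\}$, where the paper's definition of $D^{0,1}$ gives $\partial_xF$ only as an extension from the open rectangle.
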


\begin{remark}
When the space $D^{0,1}(T\times X;B)$ is equipped with the norm
\[
\n{F}_{D^{0,1}}:=\n{F}_{\mathscr{C}(T\times X;B)}+\n{\partial_x F}_{\mathscr{C}(T\times X;B)},
\]
it becomes a Banach space
and the linear isomorphism of Theorem \ref{th:D^{0,1}:1} becomes
a \emph{normed space isomorphism}.

Indeed, for any $f\in \mathscr{C}^0(T;\mathscr{C}^1(X;B))$,
it follows from the identity $f^{\partial_x F}(t)=\partial_x(f(t))$, $t\in T$,
and Remark \ref{re:C^0(C^0):isometry:1} that
\[
\n{F_f}_{D^{0,1}}
={}& \n{F_f}_{\mathscr{C}(T\times X;B)}+\n{\partial_x F_f}_{\mathscr{C}(T\times X;B)}
=\n{f}_{\mathscr{C}^0(T;\mathscr{C}^0(X;B))}+\n{f^{\partial_x F_f}}_{\mathscr{C}^0(T;\mathscr{C}^0(X;B))} \\
={}& \n{f}_{\mathscr{C}^0(T;\mathscr{C}^0(X;B))}+\n{\partial_x (f(\cdot))}_{\mathscr{C}^0(T;\mathscr{C}^0(X;B))}
=\sup_{t\in T} \n{f(t)}_{\mathscr{C}^0(X;B)}
+\sup_{t\in T} \n{\partial_x (f(t))}_{\mathscr{C}^0(X;B)}
\]
while on the other hand
\[
\n{f}_{\mathscr{C}^0(T;\mathscr{C}^1(X;B))}
={}& \sup_{t\in T} \n{f(t)}_{\mathscr{C}^1(X;B)}
=\sup_{t\in T} \big|\n{f(t)}_{\mathscr{C}^0(X;B)}+\n{\partial_x (f(t))}_{\mathscr{C}^0(X;B)}\big| \\
={}& \sup_{t\in T} \big(\n{f(t)}_{\mathscr{C}^0(X;B)}+\n{\partial_x (f(t))}_{\mathscr{C}^0(X;B)}\big).
\]

For any non-empty set $A$ and non-negative functions $g,h:A\to\R_+$,
it holds
\[
\sup_{a\in A} (g(a)+h(a))
\leq \sup_{a\in A} g(a)+\sup_{a\in A} h(a)
\leq 2\sup_{a\in A} (g(a)+h(a)).
\]

Applying this above with $A=T$, $g=\n{f(t)}_{\mathscr{C}^0(X,B)}$
and $h(t)=\n{\partial_x (f(t))}_{\mathscr{C}^0(X,B)}$, $t\in T$,
then yields
\[
\n{f}_{\mathscr{C}^0(T;\mathscr{C}^1(X;B))}
=\n{F_f}_{D^{0,1}}
\leq 2\n{f}_{\mathscr{C}^0(T;\mathscr{C}^1(X;B))}
\]
for every $f\in \mathscr{C}^0(T;\mathscr{C}^1(X;B))$
and thus $f\mapsto F_f$ is indeed an isomorphism
of normed spaces.
\end{remark}

For $k,l=0,1,2,\dots$, we define the spaces
\begin{align}\label{eq:D^{k,l}:1}
D^{k,l}(T\times X;B):=\big\{F\in \mathscr{C}(T\times X;B)\ |\ \partial_x^\alpha \partial_t^\beta F\in \mathscr{C}(T\times X;B)
\ \ \forall\, 0\leq\alpha\leq k,\ 0\leq\beta\leq l\big\}
\end{align}
which coincide with \eqref{eq:D^{1,0}:1} and \eqref{eq:D^{0,1}:1} when $(k,l)=(1,0)$ and $(k,l)=(0,1)$, respectively.
Clearly $\tau$ restricts to an $\R$-linear isomorphism
$D^{k,l}(T\times X;B)\to D^{l,k}(X\times T;B)$.

\begin{exemple}
The space $D^{1,1}$ is
\[
D^{1,1}(T\times X;B)
=\{F\in \mathscr{C}^1(T\times X;B)\ |\ \partial_t \partial_x F=\partial_x \partial_t F\in \mathscr{C}(T\times X;B)\}
\]
\end{exemple}

Before we show a result characterizing the spaces $\mathscr{C}^k(T;\mathscr{C}^l(X;B))$
in terms of the spaces $D^{k+l}(X\times B)$
for general $k,l\geq 0$
(the cases $(k,l)=(1,0)$ and $(k,l)=(0,1)$
being shown in Theorems \ref{th:D^{1,0}:1}
and \ref{th:D^{0,1}:1}),
we need the following:

\begin{proposition}\label{pr:x-reduction:1}
Let $f\in \mathscr{C}^k(T;\mathscr{C}^{l+1}(X;B))$ for $k,l\geq 0$.
Write $R_f$ for $f$ as a map $T\to \mathscr{C}^l(X;B)$
and $g:=\big(t\mapsto \partial_x (f(\cdot))\big)$.
Then $R_f$ and $g$ are in $\mathscr{C}^k(T;\mathscr{C}^l(X;B))$.
Moreover, it holds
\begin{gather}\label{eq:pr:x-reduction:1:result:1}
\partial_t^\alpha R_f(t)(x)=\partial_t^\alpha f(t)(x) \\
\partial_t^\alpha g(t)=\partial_x (\partial_t^\alpha f(t))
\nonumber
\end{gather}
for all $t\in T$, $x\in x$ and $0\leq\alpha\leq k$.
\end{proposition}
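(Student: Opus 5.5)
The plan is to proceed by induction on $k$, using the elementary fact that the two linear maps involved are continuous. First, the inclusion $\iota:\mathscr{C}^{l+1}(X;B)\to\mathscr{C}^l(X;B)$ is a bounded linear operator, since $\n{u}_{\mathscr{C}^l}\leq\n{u}_{\mathscr{C}^{l+1}}$. Second, the differentiation $\partial_x:\mathscr{C}^{l+1}(X;B)\to\mathscr{C}^l(X;B)$ is also bounded linear, since $\n{\partial_x u}_{\mathscr{C}^l}\leq\n{u}_{\mathscr{C}^{l+1}}$ (the derivatives of $\partial_x u$ up to order $l$ are among those of $u$ up to order $l+1$). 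With these in hand, $R_f=\iota\circ f$ and $g=\partial_x\circ f$ are compositions of a $\mathscr{C}^k$ map $T\to\mathscr{C}^{l+1}(X;B)$ with a bounded linear operator.

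The key lemma is that if $L:E\to F$ is bounded linear between Banach spaces and $f\in\mathscr{C}^k(T;E)$, then $L\circ f\in\mathscr{C}^k(T;F)$ with $\partial_t^\alpha(L\circ f)=L\circ\partial_t^\alpha f$ for $\alpha\leq k$. For $k=0$ this is continuity of $L$. For the inductive step, I would write
\[
\n{\tfrac{L f(t+h)-Lf(t)}{h}-L\partial_t f(t)}_F\leq \n{L}\,\n{\tfrac{f(t+h)-f(t)}{h}-\partial_t f(t)}_E\longto_{h\to 0}0,
\]
which gives $\partial_t(L\circ f)=L\circ\partial_t f$. One then applies the induction hypothesis to $\partial_t f\in\mathscr{C}^{k-1}(T;E)$, and the case $k$ follows.

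Applying the lemma with $L=\iota$ yields $R_f\in\mathscr{C}^k(T;\mathscr{C}^l(X;B))$ and $\partial_t^\alpha R_f(t)=\iota(\partial_t^\alpha f(t))$. Evaluating at $x$ gives the first identity in \eqref{eq:pr:x-reduction:1:result:1}. Applying it with $L=\partial_x$ yields $g\in\mathscr{C}^k(T;\mathscr{C}^l(X;B))$ together with $\partial_t^\alpha g(t)=\partial_x(\partial_t^\alpha f(t))$, which is the second identity.

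I do not expect a genuine obstacle here. The only point needing care is the bound $\n{\partial_x u}_{\mathscr{C}^l}\leq\n{u}_{\mathscr{C}^{l+1}}$ for the norm $\n{u}_{\mathscr{C}^m}=\sum_{j\leq m}\sup\n{\partial_x^j u}$, and this follows immediately by reindexing the sum.
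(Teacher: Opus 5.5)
Your proof is correct and is essentially the paper's argument: the paper also inducts on the order of time derivatives and controls the difference quotients of $R_f$ and $g$ with exactly the estimates $\n{u}_{\mathscr{C}^l}\leq\n{u}_{\mathscr{C}^{l+1}}$ and $\n{\partial_x u}_{\mathscr{C}^l}\leq\n{u}_{\mathscr{C}^{l+1}}$. The only difference is packaging: you route both cases through a single lemma about composing a $\mathscr{C}^k$ map with a bounded linear operator, whereas the paper writes out the two difference-quotient estimates separately.
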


\begin{proof}
Let $f\in \mathscr{C}^k(T;\mathscr{C}^{k+1}(X;B))$ and
write $g(t)=\partial_x (f(t))$, $t\in T$,
as in the statement.
For every $t\in T$, we have $f(t)\in \mathscr{C}^{l+1}(X;B)$
and hence $g(t)\in \mathscr{C}^l(X;B)$.
Thus $g$ is a map $T\to \mathscr{C}^l(X;B)$.
Moreover
\[
\n{g(s)-g(t)}_{\mathscr{C}^l}
=\n{\partial_x (f(s))-\partial_x(f(t))}_{\mathscr{C}^l}
\leq \n{f(s)-f(t)}_{\mathscr{C}^{l+1}}\longto_{s\to t} 0
\]
so that $g\in \mathscr{C}^0(T;\mathscr{C}^l(X;B))$.
Also \eqref{eq:pr:x-reduction:1:result:1} holds with $\alpha=0$
by the definition of $g$.
Furthermore
\[
\n{R_f(s)-R_f(t)}_{\mathscr{C}^l}=\n{f(s)-f(t)}_{\mathscr{C}^l}
\leq \n{f(s)-f(t)}_{\mathscr{C}^{l+1}}
\longto_{s\to t } 0
\]
so that $R_f\in \mathscr{C}^0(T;\mathscr{C}^l(X;B))$.

For the sake of induction,
suppose we have shown that
$R_f\in \mathscr{C}^m(T;\mathscr{C}^l(X;B))$ and
$g\in \mathscr{C}^m(T;\mathscr{C}^l(X;B))$
for some $0\leq m\leq k-1$ and that \eqref{eq:pr:x-reduction:1:result:1} holds with $\alpha=m$.
Then for any $t\in T$, we have
\[
{}& \n{\frac{(\partial_t^m g)(t+h)-(\partial_t^m g)(t)}{h}-\partial_x(\partial^{m+1}_t f(t))}_{\mathscr{C}^l(X;B)} \\
={}& \n{\frac{\partial_x ((\partial_t^m f)(t+h))-\partial_x ((\partial_t^m f)(t))}{h}-\partial_x(\partial^{m+1}_t f(t))}_{\mathscr{C}^l(X;B)} \\
\leq {}& \n{\frac{(\partial_t^m f)(t+h)-(\partial_t^m f)(t)}{h}-\partial^{m+1}_t f(t)}_{\mathscr{C}^{l+1}(X;B)}
\longto_{h\to 0} 0
\]
so that $\partial_t^m g$ is differentiable as a map
$T\to \mathscr{C}^l(X;B)$ with
\[
\exists\, \partial_t(\partial_t^m g)(t)=\partial_t^{m+1} g(t)
=\partial_x(\partial^{m+1}_t f(t))
\quad \forall t\in T.
\]
Hence \eqref{eq:pr:x-reduction:1:result:1} holds
with $\alpha=m+1$.
It also follows that
\[
\n{(\partial^{m+1}_t g)(s)-(\partial_t^{m+1} g)(t)}_{\mathscr{C}^l}
={}& \n{\partial_x ((\partial_t^{m+1} f)(s))-\partial_x((\partial_t^{m+1} f)(t))}_{\mathscr{C}^l} \\
\leq {}& \n{(\partial_t f)(s)-(\partial_t f)(t)}_{\mathscr{C}^{l+1}}\longto_{s\to t} 0
\]
and therefore $g\in \mathscr{C}^{m+1}(T;\mathscr{C}^l(X;B))$.

Likewise, for any $t\in T$, we have
\[
{}& \n{\frac{(\partial_t^m R_f)(t+h)-(\partial_t^m R_f)(t)}{h}-\partial^{m+1}_t f(t)}_{\mathscr{C}^l(X;B)} \\
{}& \n{\frac{(\partial_t^m f)(t+h)-(\partial_t^m f)(t)}{h}-\partial^{m+1}_t f(t)}_{\mathscr{C}^l(X;B)} \\
\leq {}& \n{\frac{(\partial_t^m f)(t+h)-(\partial_t^m f)(t)}{h}-\partial^{m+1}_t f(t)}_{\mathscr{C}^{l+1}(X;B)}
\longto_{h\to 0} 0
\]
which shows that $\partial_t^m R_f$ is differentiable
as a map $T\to \mathscr{C}^l(X;B)$
and $(\partial_t^{m+1} R_f)(t))(x)=(\partial^{m+1}_t f(t))(x)$
for every $t\in T$, $x\in X$.
Consequently
\[
\n{(\partial_t^{m+1} R_f)(s)-(\partial_t^{m+1} R_f)(t)}_{\mathscr{C}^l}
=\n{\partial^{m+1}_t f(s)-\partial^{m+1}_t f(t)}_{\mathscr{C}^l}
\leq \n{\partial^{m+1}_t f(s)-\partial^{m+1}_t f(t)}_{\mathscr{C}^{l+1}}
\longto_{s\to t} 0
\]
which implies that $R_f\in \mathscr{C}^{m+1}(T;\mathscr{C}^l(X;B))$.

Thus by induction, we conclude that
$R_f\in \mathscr{C}^k(T;\mathscr{C}^l(X;B))$,
$g\in \mathscr{C}^k(T;\mathscr{C}^l(X;B))$
and that \eqref{eq:pr:x-reduction:1:result:1} holds.
The proof is complete.
\end{proof}

\begin{theorem}\label{th:D^{k,l}:1}
For any $k,l\geq 0$,
the map $f\mapsto F_f$ yields an $\R$-linear isomorphism
\begin{align}\label{eq:th:D^{k,l}:1:result:1}
\mathscr{C}^k(T;\mathscr{C}^l(X;B))\cong D^{k,l}(T\times X;B)
\end{align}
Moreover, for every $f\in \mathscr{C}^k(T;\mathscr{C}^l(X;B))$, it holds
\begin{align}\label{eq:th:D^{k,l}:1:result:2}
\partial_t^\alpha \partial_x^\beta F_f(t,x)
=\partial_x^\beta ((\partial_t^\alpha f)(t))(x)
\quad \forall t\in T,\ x\in X,\ 0\leq\alpha\leq k,\ 0\leq\beta\leq l.
\end{align}
\end{theorem}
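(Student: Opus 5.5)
We argue by induction on $k+l$, using Theorems~\ref{th:D^{1,0}:1} and \ref{th:D^{0,1}:1} as the building blocks and Proposition~\ref{pr:x-reduction:1} to move $x$-derivatives across the time variable. The base case $k=l=0$ is Theorem~\ref{th:C^0(C^0):1} with $Z=B$.

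\textbf{Forward inclusion.} Let $f\in\mathscr{C}^k(T;\mathscr{C}^l(X;B))$. We first lower $l$ to $0$. By Proposition~\ref{pr:x-reduction:1}, applied repeatedly with $l$ replaced by $l-1,\dots,0$, for every $0\leq\beta\leq l$ the map $g_\beta:t\mapsto\partial_x^\beta(f(t))$ lies in $\mathscr{C}^k(T;\mathscr{C}^0(X;B))$, and $\partial_t^\alpha g_\beta(t)=\partial_x^\beta(\partial_t^\alpha f(t))$.

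We next lower $k$. Applying Theorem~\ref{th:D^{1,0}:1} iteratively to $g_\beta,\partial_tg_\beta,\dots$ (each lies in $\mathscr{C}^1(T;\mathscr{C}^0(X;B))$ as long as the order is at most $k-1$), we get that $F_{g_\beta}$ has continuous $t$-derivatives up to order $k$, with $\partial_t^\alpha F_{g_\beta}=F_{\partial_t^\alpha g_\beta}$.

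It remains to identify $F_{g_\beta}$ with $\partial_x^\beta F_f$. Since $f\in\mathscr{C}^0(T;\mathscr{C}^{l}(X;B))$, applying Theorem~\ref{th:D^{0,1}:1} successively to $R_f$ and to the $g_\beta$ gives $\partial_x^\beta F_f(t,x)=\partial_x^\beta(f(t))(x)=F_{g_\beta}(t,x)$. Combining the two steps yields \eqref{eq:th:D^{k,l}:1:result:2} and continuity of every mixed derivative $\partial_t^\alpha\partial_x^\beta F_f$ on $T\times X$. Hence $F_f\in D^{k,l}$.

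\textbf{Converse.} Let $F\in D^{k,l}(T\times X;B)$ and set $f=f^F$. For each $\beta\leq l$ we have $\partial_x^\beta F\in D^{k,0}$. Induction on $k$ with Theorem~\ref{th:D^{1,0}:1} shows that $t\mapsto\partial_x^\beta F(t,\cdot)$ lies in $\mathscr{C}^k(T;\mathscr{C}^0(X;B))$, with $t$-derivatives equal to $\partial_t^\alpha\partial_x^\beta F(t,\cdot)$. We must then show that $f$ itself is $\mathscr{C}^k$ as a map into $\mathscr{C}^l(X;B)$.

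\textbf{Main obstacle.} The difficulty is the gluing: we need $f$ to be differentiable in the $\mathscr{C}^l$ norm, not just componentwise in $\mathscr{C}^0$. Since $\|u\|_{\mathscr{C}^l}=\sum_{\beta\leq l}\|\partial_x^\beta u\|_{\mathscr{C}^0}$, a difference quotient of $\partial_t^{\alpha}f$ converges in $\mathscr{C}^l$ if and only if, for each $\beta$, the $\partial_x^\beta$ of that quotient converges in $\mathscr{C}^0$.

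The commutation $\partial_x^\beta(\partial_t^\alpha f(t))=\partial_t^\alpha\partial_x^\beta F(t,\cdot)$ is what makes this work. We get it from the equality of mixed partials when all mixed derivatives are continuous, which in one step is exactly the uniform-continuity/integral argument used in the proof of Theorem~\ref{th:D^{1,0}:1}. We carry this out by induction on $\alpha$. Once the commutation is established, the $\mathscr{C}^0$ convergence for each $\beta$ follows from the previous paragraph.

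Linearity of $f\mapsto F_f$ is clear, and the maps $f\mapsto F_f$ and $F\mapsto f^F$ are mutually inverse, so the correspondence is a linear isomorphism.
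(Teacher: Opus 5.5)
Your proposal is correct. In the forward direction you use the same ingredients as the paper: Proposition~\ref{pr:x-reduction:1} to peel off $x$-derivatives and Theorem~\ref{th:D^{1,0}:1} for $t$-derivatives. The paper organises these as a double induction (first on $k$ with $l$ fixed, then on $l$). You instead strip all $x$-derivatives first and then all $t$-derivatives. This yields the mixed partials directly in the order $\partial_t^\alpha\partial_x^\beta$ of \eqref{eq:th:D^{k,l}:1:result:2}, so you never need the Schwarz theorem the paper invokes in its step (a) to move $\partial_t$ past $\partial_x^\beta$. Your announced ``induction on $k+l$'' is not what you actually do, but nothing depends on it.

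The real difference is the converse. The paper's induction only proves $F_f\in D^{k,l}$ together with the derivative formula. Surjectivity of $f\mapsto F_f$ onto $D^{k,l}$, which the word ``isomorphism'' requires, is checked there only in the base cases $(0,0)$, $(1,0)$, $(0,1)$ of Theorems~\ref{th:C^0(C^0):1}, \ref{th:D^{1,0}:1} and \ref{th:D^{0,1}:1}. Your converse fills this. You also locate the crux correctly: componentwise $\mathscr{C}^0$-differentiability of $t\mapsto\partial_x^\beta F(t,\cdot)$ must be upgraded to $\mathscr{C}^l$-differentiability of $t\mapsto F(t,\cdot)$, and this needs $\partial_x^\beta(\partial_t^\alpha F(t,\cdot))=\partial_t^\alpha\partial_x^\beta F(t,\cdot)$.

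Your induction on $\alpha$ is in fact self-propagating, so no separate Schwarz lemma is needed. Suppose the commutation holds at level $\alpha$.
\begin{itemize}
\item The $\beta$-th $x$-derivative of the difference quotient of $\partial_t^\alpha F(t,\cdot)$ is then the difference quotient of $\partial_t^\alpha\partial_x^\beta F(t,\cdot)$, which converges in $\mathscr{C}^0$ by your previous step.
\item Completeness of $\mathscr{C}^l(X;B)$ gives convergence in $\mathscr{C}^l$, and the limit is $\partial_t^{\alpha+1}F(t,\cdot)$, the $\beta=0$ component.
\item Reading off the $\beta$-components gives the commutation at level $\alpha+1$.
\end{itemize}
Equivalently, $t\mapsto(\partial_x^\beta F(t,\cdot))_{\beta\le l}$ is a $\mathscr{C}^k$ map into $\mathscr{C}^0(X;B)^{l+1}$ with values in the closed image of $u\mapsto(u,u',\dots,u^{(l)})$, so its derivatives stay in that image.

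Also record that $t\mapsto\partial_t^k F(t,\cdot)$ is continuous into $\mathscr{C}^l$. Once the commutation is known, this follows from Theorem~\ref{th:C^0(C^0):1} applied to each $\partial_t^k\partial_x^\beta F$.
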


\begin{proof}
We do an induction argument w.r.t $k,l\geq 0$.
When $k=l=0$ the result follows from Theorem \ref{th:C^0(C^0):1}
and the fact that \eqref{eq:th:D^{k,l}:1:result:2}
is trivially satisfied by the definition of $F_f$.

The proof is by double induction on $k$ and $l$,
and for this reason the argument is divided into two parts:
(a) for the induction on $k$ and and (b)
for the induction on $l$.

\medskip
\noindent {\bf (a):}
Let $l\geq 0$ be fixed.
\emph{Induction hypothesis:}
Suppose that our result holds for all $0\leq k\leq K$.
Let $f\in \mathscr{C}^{K+1}(T;\mathscr{C}^l(X;B))$.

Then $\partial_t f\in \mathscr{C}^K(T;\mathscr{C}^l(X;B))$
and therefore our induction hypothesis
implies that $F_{\partial_t f}\in D^{K,l}(T\times X;B)$
with
\[
\partial_t^\alpha \partial_x^\beta F_{\partial_t f}(t,x)
=\partial_x^\beta \big(\partial_t^\alpha (\partial_t f)(t)\big)(x)
=\partial_x^\beta (\partial_t^{\alpha+1} f(t))(x)
\]
for all $t\in T$, $x\in X$ and $0\leq\alpha\leq K$, $0\leq\beta\leq l$.

Since $f\in \mathscr{C}^{K+1}(T;\mathscr{C}^l(X;B))$,
we have $f\in \mathscr{C}^1(T;\mathscr{C}^l(X;B))$
and hence $f\in \mathscr{C}^1(T;\mathscr{C}^0(X;B))$
by Proposition \ref{pr:x-reduction:1}.
Then Theorem \ref{th:D^{1,0}:1}
implies that $f^{\partial_t F}=\partial_t f$
i.e. $\partial_t F_f=F_{\partial_t f}$
which is in $D^{K,l}(T\times X;B)$ by the above.
Also because $f\in \mathscr{C}^{K+1}(T;\mathscr{C}^l(X;B))\subset \mathscr{C}^K(T;\mathscr{C}^l(X;B))$,
we have $F_f\in D^{K,l}(T\times X;B)$ by the induction hypothesis.
From these observations, it follows that $F_f\in D^{K+1,l}(T\times X;B)$ as desired.
Moreover, since $\partial_t F_f=F_{\partial_t f}$,
we have by Schwarz's theorem (\cite{wiki:Schwarz_theorem}) and by the above that
for all $t\in T$, $x\in X$ and $0\leq\beta\leq l$,
it holds
\[
\partial_t^{\alpha+1} \partial_x^\beta F_f(t,x)
=\partial_t^\alpha \partial_x^\beta (\partial_t F_f)(t,x)
=\partial_t^\alpha \partial_x^\beta F_{\partial_t f}(t,x)
=\partial_x^\beta (\partial_t^{\alpha+1} f(t))(x)
\]
for all $0\leq\alpha\leq K$
i.e.
$\partial_t^{\alpha'} \partial_x^\beta F_f(t,x)
=\partial_x^\beta (\partial_t^{\alpha'} f)(t)(x)$
holds for all $1\leq\alpha'\leq K+1$.
Finally, since $f\in \mathscr{C}^0(T;\mathscr{C}^l(X;B))$,
this also holds for $\alpha'=0$
by the induction hypothesis.

Therefore, by induction,
the theorem holds for all $(k,l)$ with $k=0,1,2,\dots$
if it holds for $(0,l)$.

\medskip
\noindent {\bf (b):}
\emph{Induction hypothesis:}
Suppose that our result holds for all $0\leq l\leq L$
and all $k\geq 0$.
Let $f\in \mathscr{C}^k(T;\mathscr{C}^{L+1}(X;B))$.

Then by Proposition \ref{pr:x-reduction:1},
we have $g:=\big(t\mapsto \partial_x (f(\cdot))\big)\in \mathscr{C}^k(T;\mathscr{C}^L(X;B))$
with \eqref{eq:pr:x-reduction:1:result:1} holding.
Therefore our induction hypothesis implies that
$F_g\in D^{k,L}(T\times X;B)$
with
\[
\partial_t^\alpha \partial_x^\beta F_g(t,x)
=\partial_x^\beta (\partial_t^\alpha g(t))(x)
=\partial_x^\beta \big(\partial_x (\partial_t^\alpha f(t))\big)(x)
=\partial_x^{\beta+1} (\partial_t^\alpha f(t))(x)
\]
for all $t\in T$, $x\in X$ and $0\leq\alpha\leq k$, $0\leq\beta\leq L$.

Since $f\in \mathscr{C}^k(T;\mathscr{C}^{L+1}(X;B))$,
we have $f\in \mathscr{C}^k(T;\mathscr{C}^1(X;B))$ by Proposition \ref{pr:x-reduction:1}
and hence $f\in \mathscr{C}^0(T;\mathscr{C}^1(X;B))$.
Then Theorem \ref{th:D^{0,1}:1} implies
that $F_g(t,x)=g(t)(x)=\partial_x (f(t))(x)=\partial_x F_f(t,x)$
for every $t,x$
i.e. $F_g=\partial_x F_f$.
Hence by the above,
for every $t\in T$, $x\in X$ and $0\leq\alpha\leq k$, it holds
\[
\partial_t^\alpha \partial_x^{\beta+1} F_f(t,x)=\partial_t^\alpha \partial_x^\beta (\partial_x F_f)(t,x)
=\partial_x^{\beta+1} (\partial_t^\alpha f(t))(x)
\]
for all $0\leq\beta\leq L$
i.e. $\partial_t^\alpha \partial_x^{\beta'} F_f(t,x)=\partial_x^{\beta'} (\partial_t^\alpha f(t))(x)$
holds for all $1\leq\beta'\leq L+1$.
Since $f\in \mathscr{C}^k(T;\mathscr{C}^0(X;B))$ by Proposition \ref{pr:x-reduction:1},
this also holds for $\beta'=0$ by the induction hypothesis.

Finally, since $f\in \mathscr{C}^k(T;\mathscr{C}^0(X;B))$,
it follows from the induction hypothesis that $F_f\in D^{k,0}(T\times X;B)$.
This combined with the above fact that $\partial_x F_f=F_g\in D^{k,L}(T\times X;B)$,
allows us to conclude that $F_f\in D^{k,L+1}(T\times X;B)$
as desired.

Therefore, by induction,
the theorem holds for all $(k,l)$ with $0\leq k\leq K$
and $l=0,1,2,\dots$

\medskip
To conclude, by (a) and (b) the theorem holds for all $k,l\geq 0$.

\end{proof}

\begin{corollary}\label{cor:th:D^{k,l}:1:1}
For any $k,l\geq 0$,
the map $f\mapsto\tau(f)$ is an $\R$-linear isomorphism
\begin{align}\label{eq:cor:th:D^{k,l}:1:1:result:1}
\mathscr{C}^k(T;\mathscr{C}^l(X;B))\cong \mathscr{C}^l(X;\mathscr{C}^k(T;B)).
\end{align}
\end{corollary}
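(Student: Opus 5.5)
The plan is to factor the map $f\mapsto\tau(f)$ through the two-variable function spaces, using Theorem~\ref{th:D^{k,l}:1} twice together with the symmetry of $D^{k,l}$ under $\tau$. Concretely, for $f\in \mathscr{C}^k(T;\mathscr{C}^l(X;B))$, Theorem~\ref{th:D^{k,l}:1} gives $F_f\in D^{k,l}(T\times X;B)$. By the remark preceding the example of $D^{1,1}$, $\tau$ restricts to an $\R$-linear isomorphism $D^{k,l}(T\times X;B)\to D^{l,k}(X\times T;B)$, so $\tau(F_f)\in D^{l,k}(X\times T;B)$. Applying Theorem~\ref{th:D^{k,l}:1} again with the roles of the intervals exchanged (domain $X$ outside, $T$ inside, orders $(l,k)$) produces $f^{\tau(F_f)}\in\mathscr{C}^l(X;\mathscr{C}^k(T;B))$. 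Finally, the identity $\tau(f^F)=f^{\tau(F)}$, equivalently $F_{\tau(f)}=\tau(F_f)$, recorded earlier, shows that this element is exactly $\tau(f)$.

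The map is therefore the composition
\[
\mathscr{C}^k(T;\mathscr{C}^l(X;B))\xrightarrow{f\mapsto F_f} D^{k,l}(T\times X;B)\xrightarrow{\tau} D^{l,k}(X\times T;B)\xrightarrow{G\mapsto f^G}\mathscr{C}^l(X;\mathscr{C}^k(T;B)).
\]
Each arrow is an $\R$-linear bijection: the first and last by Theorem~\ref{th:D^{k,l}:1} (the last being the inverse of $g\mapsto F_g$), and the middle by the restriction property. Hence the composite is a linear isomorphism. Its inverse is again $\tau$, since $\tau\circ\tau=\id$.

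The one point needing care, and the main obstacle, is justifying that $\tau$ maps $D^{k,l}(T\times X;B)$ onto $D^{l,k}(X\times T;B)$. The definition \eqref{eq:D^{k,l}:1} asks only that the specific mixed partials $\partial_x^\alpha\partial_t^\beta F$ exist and be continuous, in that order of differentiation. After swapping variables, one needs $\partial_t^\beta\partial_x^\alpha$ in the reversed order. I would handle this by induction on $\alpha+\beta$ using Schwarz's theorem in its "existence of the mixed partial" form: if $\partial_t G$, $\partial_x G$ and $\partial_x\partial_t G$ exist and are continuous, then $\partial_t\partial_x G$ exists and equals $\partial_x\partial_t G$. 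Applying this to $G=\partial_x^{\alpha-1}\partial_t^{\beta-1}F$, which is legitimate since all lower-order mixed partials are continuous by hypothesis, lets one commute derivatives one step at a time. I would also note in passing that the index conventions in \eqref{eq:D^{k,l}:1} versus \eqref{eq:th:D^{k,l}:1:result:2} ($\alpha$ with $x$ versus $t$) should be read consistently, with $k$ counting $t$-derivatives and $l$ counting $x$-derivatives.

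If a normed isomorphism is also desired, the same composition combined with the norm-equivalence remarks after Theorems~\ref{th:D^{1,0}:1} and \ref{th:D^{0,1}:1}, extended by the same sup-inequalities, would give continuity in both directions.
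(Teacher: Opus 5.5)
Your proposal is correct and follows the paper's proof. Both factor $f\mapsto\tau(f)$ through $f\mapsto F_f$, the swap $\tau:D^{k,l}(T\times X;B)\to D^{l,k}(X\times T;B)$, and Theorem~\ref{th:D^{k,l}:1} applied with the intervals exchanged, using $F_{\tau(f)}=\tau(F_f)$ and $\tau^{-1}=\tau$; the only difference is that the paper calls the restriction property of $\tau$ ``clear,'' whereas your double induction with the existence form of Schwarz's theorem, applied to $\partial_x^{\alpha-1}\partial_t^{\beta-1}F$, actually justifies reversing the prescribed order of the mixed partials.
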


\begin{proof}
Clearly $\tau$ is an $\R$-linear isomorphism $D^{k,l}(T\times X;B)\to D^{l,k}(X\times T;B)$.
If $f\in \mathscr{C}^k(T;\mathscr{C}^l(X;B))$ then $F_f\in D^{k,l}(T\times X;B)$
and so $F_{\tau(f)}=\tau(F_f)\in D^{l,k}(X\times T;B)$
i.e. $\tau(f)\in \mathscr{C}^l(X;\mathscr{C}^k(T;B))$.

Likewise $\tau^{-1}=\tau$ is a bijection $D^{l,k}(X\times T;B)\to D^{k,l}(T\times X;B)$.
Hence if $g\in \mathscr{C}^l(X;\mathscr{C}^k(T;B))$ then
$F_g\in D^{l,k}(X\times T;B)$
and so $F_{\tau^{-1}(g)}=\tau^{-1}(F_g)\in D^{k,l}(T\times X;B)$
i.e. $\tau^{-1}(g)\in \mathscr{C}^k(T;\mathscr{C}^l(X;B))$.
The result follows.
\end{proof}

Let $k,l\geq 0$.
Equipped with the norm
\[
\n{F}_{D^{k,l}}
:=\sum_{\alpha=0}^k \sum_{\beta=0}^l \n{\partial_t^\alpha \partial_x^\beta F}_{\mathscr{C}^0(T\times X;B)}
=\sum_{\alpha=0}^k \sum_{\beta=0}^l \sup_{(t,x)\in T\times X} \n{\partial_t^\alpha \partial_x^\beta F(t,x)},
\]
the space $D^{k,l}(T\times X;B)$ becomes a real Banach space.
Moreover, we have

\begin{theorem}
The $\R$-linear isomorphisms \eqref{eq:th:D^{k,l}:1:result:1} and \eqref{eq:cor:th:D^{k,l}:1:1:result:1}
are normed space isomorphisms.
\end{theorem}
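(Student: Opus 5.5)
We must show that the linear bijections \eqref{eq:th:D^{k,l}:1:result:1} and \eqref{eq:cor:th:D^{k,l}:1:1:result:1} are bounded with bounded inverses. The approach is to compare $\n{f}_{\mathscr{C}^k(T;\mathscr{C}^l(X;B))}$ directly with $\n{F_f}_{D^{k,l}}$, using the derivative identity \eqref{eq:th:D^{k,l}:1:result:2} of Theorem \ref{th:D^{k,l}:1}.

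First, unwind the iterated norm. By definition,
\[
\n{f}_{\mathscr{C}^k(T;\mathscr{C}^l(X;B))}=\sum_{\alpha=0}^k \sup_{t\in T}\n{\partial_t^\alpha f(t)}_{\mathscr{C}^l(X;B)}
=\sum_{\alpha=0}^k \sup_{t\in T}\sum_{\beta=0}^l \sup_{x\in X}\n{\partial_x^\beta(\partial_t^\alpha f(t))(x)}.
\]
By \eqref{eq:th:D^{k,l}:1:result:2}, each inner quantity equals $\n{\partial_t^\alpha\partial_x^\beta F_f(t,x)}$.

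Next, apply the elementary inequality already used in the remark after Theorem \ref{th:D^{0,1}:1}, now for $l+1$ nonnegative functions of $t$ rather than two:
\[
\sup_t \sum_{\beta=0}^l h_\beta(t)\le \sum_{\beta=0}^l\sup_t h_\beta(t)\le (l+1)\sup_t\sum_{\beta=0}^l h_\beta(t).
\]
Since $\sup_t\sup_x=\sup_{(t,x)}$, summing over $\alpha$ gives
\[
\n{f}_{\mathscr{C}^k(T;\mathscr{C}^l(X;B))}\le \n{F_f}_{D^{k,l}}\le (l+1)\n{f}_{\mathscr{C}^k(T;\mathscr{C}^l(X;B))}.
\]
So $f\mapsto F_f$ is a normed space isomorphism. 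Bijectivity and linearity are already provided by Theorem \ref{th:D^{k,l}:1}.

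For \eqref{eq:cor:th:D^{k,l}:1:1:result:1}, factor $\tau$ as
\[
\mathscr{C}^k(T;\mathscr{C}^l(X;B))\to D^{k,l}(T\times X;B)\to D^{l,k}(X\times T;B)\to \mathscr{C}^l(X;\mathscr{C}^k(T;B)).
\]
The middle map $F\mapsto\tau(F)$ is an isometry for the $D$-norms. This holds because the defining double sum over $(\alpha,\beta)$ is symmetric, once we use that mixed partials commute. The outer two maps are normed isomorphisms by the first step, applied with the roles of $(T,k)$ and $(X,l)$ swapped.

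The only point needing care is the commutation of mixed partials. It is needed to identify $\partial_x^\beta\partial_t^\alpha\tau(F)$ with $\partial_t^\alpha\partial_x^\beta F$, which is where the isometry of the middle map is justified. This follows from Schwarz's theorem, because all the relevant derivatives are continuous on the compact rectangle. It is also already implicit in how \eqref{eq:th:D^{k,l}:1:result:2} was proved.
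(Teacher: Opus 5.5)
Your proof is correct and follows the paper's argument: you unwind the iterated norm using \eqref{eq:th:D^{k,l}:1:result:2}, apply the elementary $\sup$/sum inequality to get two-sided bounds, and factor $\tau$ through the isometry $D^{k,l}(T\times X;B)\to D^{l,k}(X\times T;B)$. Your constant $l+1$ is the correct one (the paper writes $l$, which is a slip and already fails for $l=0$), and your explicit appeal to Schwarz's theorem for the middle isometry spells out a step the paper uses without comment.
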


\begin{proof}
For any $f\in \mathscr{C}^k(T;\mathscr{C}^l(X;B))$, we have
(see \eqref{eq:th:D^{k,l}:1:result:2})
\[
\n{f}_{\mathscr{C}^k(T;\mathscr{C}^l(X;B))}
={}& \sum_{\alpha=0}^k \sup_{t\in T} \n{\partial_t^\alpha f(t)}_{\mathscr{C}^l(X;B)}
=\sum_{\alpha=0}^k \sup_{t\in T} \sum_{\beta=0}^l \sup_{x\in X} \n{\partial_x^\beta (\partial_t^\alpha f(t))(x)} \\
={}& \sum_{\alpha=0}^k \sup_{t\in T} \sum_{\beta=0}^l \sup_{x\in X} \n{\partial_t^\alpha \partial_x^\beta F_f(t,x)}.
\]

For any non-empty set $A$ and non-negative
functions $h_1,\dots,h_n:A\to\R_+$, it holds
\[
\sup_{a\in A} \sum_{i=1}^n h_i(a)
\leq \sum_{i=1}^n \sup_{a\in A} h_i(a)
\leq n\sup_{a\in A} \sum_{i=1}^n h_i(a).
\]
Consequently
\[
\n{f}_{\mathscr{C}^k(T;\mathscr{C}^l(X;B))}
\leq {}& \sum_{\alpha=0}^k \sum_{\beta=0}^l \sup_{t\in T} \sup_{x\in X} \n{\partial_t^\alpha \partial_x^\beta F_f(t,x)}
=\sum_{\alpha=0}^k \sum_{\beta=0}^l \sup_{(t,x)\in T\times X} \n{\partial_t^\alpha \partial_x^\beta F_f(t,x)} \\
={}& \sum_{\alpha=0}^k \sum_{\beta=0}^l \sup_{(t,x)\in T\times X} \n{\partial_t^\alpha \partial_x^\beta F_f(t,x)}
=\n{F_f}_{D^{k,l}}
\]
and likewise
\[
\n{F_f}_{D^{k,l}}
=\sum_{\alpha=0}^k \sum_{\beta=0}^l \sup_{t\in T} \sup_{x\in X} \n{\partial_t^\alpha \partial_x^\beta F_f(t,x)}
\leq l\sum_{\alpha=0}^k \sup_{t\in T} \sum_{\beta=0}^l \sup_{x\in X} \n{\partial_t^\alpha \partial_x^\beta F_f(t,x)}
=l\n{f}_{\mathscr{C}^k(T,\mathscr{C}^l(X,B))}.
\]
That is
\[
\n{f}_{\mathscr{C}^k(T;\mathscr{C}^l(X;B))}\leq \n{F_f}_{D^{k,l}}\leq l\n{f}_{\mathscr{C}^k(T;\mathscr{C}^l(X;B))}
\quad\forall f\in \mathscr{C}^k(T;\mathscr{C}^l(X;B))
\]
which shows that \eqref{eq:th:D^{k,l}:1:result:1}
is a normed space isomorphism.

For any $F\in D^{k,l}(T\times X;B)$, we have
\[
\n{\tau(F)}_{D^{l,k}}
=\sum_{\beta=0}^l \sum_{\alpha=0}^k \sup_{(x,t)\in X\times T} \n{\partial_x^\beta \partial_t^\alpha \tau(F)(x,t)}
=\sum_{\alpha=0}^k \sum_{\beta=0}^l \sup_{(t,x)\in T\times X} \n{\partial_t^\alpha \partial_x^\beta F(t,x)}
=\n{F}_{D^{k,l}}
\]
so that $\tau$ is an isometric isomorphism $D^{k,l}\to D^{l,k}$.
Therefore, because \eqref{eq:th:D^{k,l}:1:result:1}
is a normed space isomorphism by the above,
we deduce that $\tau$ as a map $\mathscr{C}^k(T;\mathscr{C}^l(X;B))\to \mathscr{C}^l(X;\mathscr{C}^k(T;B))$
is also a normed space isomorphism.
The proof is complete.
\end{proof}

\subsection{Dual of $\mathscr{C}^k([a,b];V)$}\label{sec:dualCk}

For a finite dimensional $\R$-linear space $V$, $a<b$ and $k\in\N$,
the dual space
$\mathscr{C}^k([a,b];V)^*$ of $\mathscr{C}^k([a,b];V)$ consists of all $\R$-linear maps $\lambda:\mathscr{C}^k([a,b];V)\to\R$ of the form
\begin{align}\label{eq:dual_of_C^k:1}
\lambda(f)=\sum_{j=0}^k \int_{[a,b]} f^{(j)} d\mu_j,
\quad f\in \mathscr{C}^k([a,b];V),
\end{align}
where $\mu_0,\mu_1,\dots,\mu_k$ are $V^*$-valued Radon measures on $[0,1]$.
This result is readily obtained by embedding
$\mathscr{C}^k([a,b];V)$ into $\mathscr{C}([a,b];V)^{k+1}$
via the map
$f\mapsto (f,f',f'',\dots,f^{(k)})$, extending
$\lambda$ to a bounded linear functional $\wt{\lambda}$ on $\mathscr{C}([a,b];V)^{k+1}$ by Hahn-Banach theorem and finally using Riesz representation theorem on $\wt{\lambda}$.
One should note that this representation of $\lambda$
is not unique in terms of the measures $\mu_0,\mu_1,\dots,\mu_k$.

To obtain a unique representation,
consider first an $\R$-valued signed
Radon measure $\nu$ on $[a,b]$
and $f\in \mathscr{C}^1([a,b])$.
Since $f(x)=f(a)+\int_a^x f(t)dt$ for all $x\in [a,b]$,
it follows from Fubini's theorem that
\[
\int_{[a,b]} f(x)\nu(dx)
={}& \nu([a,b]) f(a)+\int_{[a,b]} \int_a^x f'(t)dt\nu(dx) \\
={}& \nu([a,b]) f(a)+\int_{[a,b]} \int_{[a,b]} \indic(t<x) f'(t)dt\nu(dx) \\
={}& \nu([a,b]) f(a)+\int_{[a,b]} \int_{[a,b]} \indic(t<x) f'(t)\nu(dx) dt \\
={}& \nu([a,b]) f(a)+\int_{[a,b]} \nu(]t,b]) f'(t) dt
\]
i.e.,
\[
\int_{[a,b]} f(x)\nu(dx)
=c f(a)+\int_{[a,b]} f'(t) g(t) dt
\]
with $c:=\nu([a,b])\in\R$
and $g(t):=\nu(]t,b])$, $t\in [a,b]$.
Notice that $g:[a,b]\to\R$ is left-continuous
and bounded (since $|g(t)|=|\nu(]t,b])|\leq |\nu|(]t,b])\leq |\nu|([a,b])<\infty$ for all $t\in [a,b]$).
Hence $g(t)dt$ is a (Borel and hence) Radon measure on $[a,b]$.

From the previous observation and induction in \eqref{eq:dual_of_C^k:1}, one deduces that
any $\lambda\in \mathscr{C}^k([a,b],V)^*$ can be represented
in the form (cf. \cite[Exercise IV.13.36]{dunford1988})
\begin{align}\label{eq:dual_of_C^k:2}
\lambda(f)=\sum_{j=1}^{k-1} \la c_j,f^{(j)}(a)\ra_{V^*,V}+\int_{[a,b]} f^{(k)} d\mu,
\quad f\in \mathscr{C}^k([a,b],V),
\end{align}
for unique constants $c_1,\dots,c_{k-1}\in V^*$
and a unique $V$-valued Radon measure $\mu$ on $[a,b]$.


\bibliographystyle{plain}
\bibliography{biblio.bib}

@inproceedings{Choset2009,
 author = {Hatton, Ross L. and Choset, Howie},
 title = {Generating gaits for snake robots by annealed chain fitting and keyframe wave extraction},
 booktitle = {Proc. IEEE/RSJ Int. Conf. Intell. Robots syst.},
 series = {IROS'09},
 year = {2009},
 isbn = {978-1-4244-3803-7},
 location = {St. Louis, MO, USA},
 pages = {840--845},
 numpages = {6},
}

@article{Gray1946,
author = {GRAY, J.},
title = {The Mechanism of Locomotion in Snakes},
journal = {J. Exp. Biol.},
volume = {23},
number = {2},
pages = {101-120},
year = {1946},
}

@ARTICLE{Boyer2011,
title={Recursive Inverse Dynamics of Mobile Multibody Systems With Joints and Wheels},
author={Boyer, F. and Ali, S.},
journal={IEEE Transactions on Robotics},
year={2011},
month={April},
volume={27},
number={2},
pages={215 - 228},
doi={10.1109/TRO.2010.2103450},
ISSN={1552-3098}, }

@INPROCEEDINGS{Ostrowski96gaitkinematics,
    author = {Jim Ostrowski and Joel Burdick},
    title = {Gait kinematics for a serpentine robot},
    booktitle = {Proc. IEEE Int. Conf. on Rob. and Autom.},
    year = {1996},
    pages = {1294--1299}
}

@article{Choseta,
author = {Howie Choset and Wade Henning},
    title = {A Follow-The-Leader Approach To Serpentine Robot Motion Planning},
    journal = {ASCE Journal of Aerospace Engineering},
    year = {1999},
    volume = {12}
}

@article{Liljeback,
 author = {Liljeback, Paal and Pettersen, Kristin Y. and Stavdahl, Oyvind and Gravdahl, Jan Tommy},
 title = {Hybrid modelling and control of obstacle-aided snake robot locomotion},
 journal = {Trans. Rob.},
 issue_date = {October 2010},
 volume = {26},
 issue = {5},
 month = {October},
 year = {2010},
 issn = {1552-3098},
 pages = {781--799},
  publisher = {IEEE Press},
 address = {Piscataway, NJ, USA},
}

@Article{BurgnerKahrs2015,
  author   = {Burgner-Kahrs, Jessica and Rucker, D. Caleb and Choset, Howie},
  journal  = {IEEE Transactions on Robotics},
  title    = {Continuum Robots for Medical Applications: A Survey},
  year     = {2015},
  number   = {6},
  pages    = {1261-1280},
  volume   = {31},
  doi      = {10.1109/TRO.2015.2489500},
}

@article{boyer2011macrocontinuous,
  title={Macrocontinuous dynamics for hyperredundant robots: application to kinematic locomotion bioinspired by elongated body animals},
  author={Boyer, Fr{\'e}d{\'e}ric and Ali, Shaukat and Porez, Mathieu},
  journal={IEEE Transactions on Robotics},
  volume={28},
  number={2},
  pages={303--317},
  year={2011},
  publisher={IEEE}
}

@article{boyer2010poincare,
  title={Poincar{\'e}--Cosserat equations for the lighthill three-dimensional large amplitude elongated body theory: Application to robotics},
  author={Boyer, Frederic and Porez, Mathieu and Leroyer, Alban},
  journal={Journal of Nonlinear Science},
  volume={20},
  pages={47--79},
  year={2010},
  publisher={Springer}
}

@ARTICLE{6072271,
  author={Boyer, Frédéric and Ali, Shaukat and Porez, Mathieu},
  journal={IEEE Transactions on Robotics},
  title={Macrocontinuous Dynamics for Hyperredundant Robots: Application to Kinematic Locomotion Bioinspired by Elongated Body Animals},
  year={2012},
  volume={28},
  number={2},
  pages={303-317},
  doi={10.1109/TRO.2011.2171616}}

@ARTICLE{7314984,
  author={Burgner-Kahrs, Jessica and Rucker, D. Caleb and Choset, Howie},
  journal={IEEE Transactions on Robotics},
  title={Continuum Robots for Medical Applications: A Survey},
  year={2015},
  volume={31},
  number={6},
  pages={1261-1280},
  doi={10.1109/TRO.2015.2489500}}

@book{marsden1999introduction,
  title={Introduction to mechanics and symmetry: a basic exposition of classical mechanical systems},
  author={Marsden, Jerrold E and Ratiu, Tudor S and Golubitsky, M},
  volume={17},
  year={1999},
  publisher={Springer}
}

@article{poincare1901forme,
  title={Sur une forme nouvelle des {\'e}quations de la m{\'e}canique},
  author={Poincar{\'e}, Henri and others},
  journal={CR Acad. Sci},
  volume={132},
  pages={369--371},
  year={1901}
}

@article{bloch2025infinite,
  title={Infinite-dimensional and field-theoretic nonholonomic mechanics},
  author={Bloch, Anthony M and Zenkov, Dmitry V},
  journal={Regular and Chaotic Dynamics},
  volume={30},
  number={4},
  pages={550--565},
  year={2025},
  publisher={Springer}
}

@article{rao2021model,
  title={How to model tendon-driven continuum robots and benchmark modelling performance},
  author={Rao, Priyanka and Peyron, Quentin and Lilge, Sven and Burgner-Kahrs, Jessica},
  journal={Frontiers in Robotics and AI},
  volume={7},
  pages={630245},
  year={2021},
  publisher={Frontiers Media SA}
}

@article{canali2022design,
  title={Design of a novel long-reach cable-driven hyper-redundant snake-like manipulator for inspection and maintenance},
  author={Canali, Carlo and Pistone, Alessandro and Ludovico, Daniele and Guardiani, Paolo and Gagliardi, Roberto and De Mari Casareto Dal Verme, Lorenzo and Sofia, Giuseppe and Caldwell, Darwin G},
  journal={Applied Sciences},
  volume={12},
  number={7},
  pages={3348},
  year={2022},
  publisher={MDPI}
}

@phdthesis{strohmeyer2018networks,
  title={Networks of nonlinear thin structures-theory and applications},
  author={Strohmeyer, Christoph},
  year={2018},
  school={Dissertation, Erlangen, Friedrich-Alexander-Universit{\"a}t Erlangen-N{\"u}rnberg~…}
}

@ARTICLE{10494907,
  author={Xun, Lingxiao and Zheng, Gang and Kruszewski, Alexandre},
  journal={IEEE Transactions on Robotics},
  title={Cosserat-Rod-Based Dynamic Modeling of Soft Slender Robot Interacting With Environment},
  year={2024},
  volume={40},
  number={},
  pages={2811-2830},
  doi={10.1109/TRO.2024.3386393}}

@book{dunford1988,
  title={Linear Operators, Part 1: General Theory},
  author={Dunford, N. and Schwartz, J. T.},
  year={1988},
  publisher={Wiley}
}

@book{hatcher:2001,
  title={Algebraic Topology},
  author={Hatcher, A.},
  year={2001},
  publisher={}
}

@misc{holm1998eulerpoincareequationssemidirectproducts,
      title={The Euler-Poincare Equations and Semidirect Products with Applications to Continuum Theories},
      author={D. D. Holm and J. E. Marsden and T. S. Ratiu},
      year={1998},
      eprint={chao-dyn/9801015},
      archivePrefix={arXiv},
      primaryClass={chao-dyn},
      url={https://arxiv.org/abs/chao-dyn/9801015},
}

@book{kirillov2008introduction,
  title={An introduction to Lie groups and Lie algebras},
  author={Kirillov, Alexander A},
  volume={113},
  year={2008},
  publisher={Cambridge University Press Cambridge}
}

@article{Cox_1970, title={The motion of long slender bodies in a viscous fluid Part 1. General theory}, volume={44}, DOI={10.1017/S002211207000215X}, number={4}, journal={Journal of Fluid Mechanics}, author={Cox, R. G.}, year={1970}, pages={791–810}}

@article{BoyerPrimault,
title = {The Poincaré-Chetayev equations and flexible multibody systems},
journal = {Journal of Applied Mathematics and Mechanics},
volume = {69},
number = {6},
pages = {925-942},
year = {2005},
issn = {0021-8928},
doi = {https://doi.org/10.1016/j.jappmathmech.2005.11.015},
url = {https://www.sciencedirect.com/science/article/pii/S0021892805001383},
author = {F. Boyer and D. Primault}
}

@BOOK{Bloch,
   author = {A. Bloch and P. Crouch and J. Baillieul and J. Marsden},
   title = {Nonholonomic Mechanics and Control},
   publisher = {Springer-Verlag},
   address={New York},
   year = {2007}
   }

@article{boyer2017poincare,
  title={Poincare’s equations for cosserat media: Application to shells},
  author={Boyer, Frederic and Renda, Federico},
  journal={Journal of Nonlinear Science},
  volume={27},
  pages={1--44},
  year={2017},
  publisher={Springer}
}

@article{boyer2022extended,
  title={Extended Hamilton’s principle applied to geometrically exact Kirchhoff sliding rods},
  author={Boyer, Fr{\'e}d{\'e}ric and Lebastard, Vincent and Candelier, Fabien and Renda, Federico},
  journal={Journal of Sound and Vibration},
  volume={516},
  pages={116511},
  year={2022},
  publisher={Elsevier}
}

@article{BoyerPorezMauny2018,
  author  = {Boyer, Fr{\'e}d{\'e}ric and Porez, Mathieu and Mauny, Johan},
  title   = {Reduced Dynamics of the Non-holonomic Whipple Bicycle},
  journal = {Journal of Nonlinear Science},
  volume  = {28},
  number  = {3},
  pages   = {943--983},
  year    = {2018},
  doi     = {10.1007/s00332-017-9434-x}
}

@Book{Cosserat1909,
  author    = {Cosserat, E. and Cosserat, F.},
  publisher = {A. Hermann et fils},
  title     = {Theorie des corps d{\'e}dormables},
  year      = {1909},
  url       = {https://books.google.fr/books?id=U3e4AAAAIAAJ},
}

@Article{Antman76,
title={Ordinary differential equations of nonlinear elasticity I: Foundations of the theories of
non-linearly elastic rods and shell},
author={S. S. Antman},
journal={Arch. Rat. Mech. Anal.},
year={1976},
volume={61},
number={4},
pages={307-351},
}

@Article{Simo1988,
  author    = {J.C. Simo and L. Vu-Quoc},
  journal   = {Computer Methods in Applied Mechanics and Engineering},
  title     = {On the dynamics in space of rods undergoing large motions {\textemdash} A geometrically exact approach},
  year      = {1988},
  month     = {feb},
  number    = {2},
  pages     = {125--161},
  volume    = {66},
  doi       = {10.1016/0045-7825(88)90073-4},
  publisher = {Elsevier {BV}},
}

@book{Truesdell1991,
  author    = {Truesdell, Clifford A.},
  title     = {A First Course in Rational Continuum Mechanics},
  volume    = {1},
  series    = {Pure and Applied Mathematics},
  number    = {71},
  edition   = {2},
  publisher = {Academic Press},
  address   = {Boston},
  year      = {1991}
}

@Book{Murray2017,
  author    = {Murray, Richard M and Li, Zexiang and Sastry, S Shankar},
  publisher = {CRC press},
  title     = {A mathematical introduction to robotic manipulation},
  year      = {2017},
}

@Book{MerodioRosato2012,
  author    = {Merodio, Jose and Rosato, Anthony D.},
  title     = {General Overview of Continuum Mechanics},
  booktitle = {Continuum Mechanics},
  volume    = {I},
  pages     = {1--52},
  publisher = {EOLSS Publishers},
  year      = {2012}
}

@BOOK{MarsdenHughes,
   author = {J. E. Marsden and T. J. R. Hughes},
   title= {Mathematical Foundations of Elasticity},
   publisher = {Dover edition},
   edition={First},
   year = {1994} }

@misc{rodriguez2020boundaryfeedbackstabilizationintrinsic,
      title={Boundary feedback stabilization for the intrinsic geometrically exact beam model},
      author={Charlotte Rodriguez and Günter Leugering},
      year={2020},
      eprint={1912.02543},
      archivePrefix={arXiv},
      primaryClass={math.AP},
      url={https://arxiv.org/abs/1912.02543},
}

@article{lewis1995variational,
  title={Variational principles for constrained systems: theory and experiment},
  author={Lewis, Andrew D and Murray, Richard M},
  journal={International Journal of Non-Linear Mechanics},
  volume={30},
  number={6},
  pages={793--815},
  year={1995},
  publisher={Elsevier}
}

@book{sharpe2000differential,
  title={Differential geometry: Cartan's generalization of Klein's Erlangen program},
  author={Sharpe, Richard W},
  volume={166},
  year={2000},
  publisher={Springer Science \& Business Media}
}

@book{bastin2016stability,
  title={Stability and boundary stabilization of 1-D hyperbolic systems},
  author={Bastin, Georges and Coron, Jean-Michel},
  volume={88},
  year={2016},
  publisher={Springer}
}

@article{Damping2013,
  author  = {Linn, J. and Lang, H. and Tuganov, A.},
  title   = {Geometrically exact Cosserat rods with Kelvin--Voigt type viscous damping},
  journal = {Mechanics Sciences},
  volume  = {4},
  number  = {1},
  pages   = {79--96},
  year    = {2013},
  doi     = {10.5194/ms-4-79-2013}
}

@incollection{Antman1984,
  author    = {Antman, Stuart S.},
  title     = {The Theory of Rods},
  booktitle = {Mechanics of Solids},
  volume    = {II},
  editor    = {Truesdell, Clifford},
  pages     = {641--703},
  publisher = {Springer-Verlag},
  address   = {Berlin, Heidelberg},
  year      = {1984}
}

@book{meirovitch2010methods,
  title={Methods of analytical dynamics},
  author={Meirovitch, Leonard},
  year={2010},
  publisher={Courier Corporation}
}

@article{ko2017modeling,
  title={Modeling polymorphic transformation of rotating bacterial flagella in a viscous fluid},
  author={Ko, William and Lim, Sookkyung and Lee, Wanho and Kim, Yongsam and Berg, Howard C and Peskin, Charles S},
  journal={Physical Review E},
  volume={95},
  number={6},
  pages={063106},
  year={2017},
  publisher={APS}
}

@book{jarchow2012locally,
  title={Locally convex spaces},
  author={Jarchow, Hans},
  year={2012},
  publisher={Springer Science \& Business Media}
}

@misc{edvardsson:2016,
title={The Noether theorem},
note={Analytical mechanics - FYGB08},
author={Edvardsson, Elisabet},
year={2016},
howpublished={\url{https://jfuchs.hotell.kau.se/kurs/amek/prst/15_nthm.pdf}}
}

@article{tummers2023cosserat,
  title={Cosserat rod modeling of continuum robots from newtonian and lagrangian perspectives},
  author={Tummers, Matthias and Lebastard, Vincent and Boyer, Fr{\'e}d{\'e}ric and Troccaz, Jocelyne and Rosa, Benoit and Chikhaoui, M Taha},
  journal={IEEE Transactions on Robotics},
  volume={39},
  number={3},
  pages={2360--2378},
  year={2023},
  publisher={IEEE}
}

@book{brogliato1996nonsmooth,
  title={Nonsmooth impact mechanics: models, dynamics and control},
  author={Brogliato, Bernard},
  year={1996},
  publisher={Springer}
}

@INPROCEEDINGS{1087247,
  author={Khatib, O.},
  booktitle={Proceedings. 1985 IEEE International Conference on Robotics and Automation},
  title={Real-time obstacle avoidance for manipulators and mobile robots},
  year={1985},
  volume={2},
  number={},
  pages={500-505},
  doi={10.1109/ROBOT.1985.1087247}}

\end{document}